\newcommand{\leqnomode}{\tagsleft@true\let\veqno\@@leqno}
\newcommand{\reqnomode}{\tagsleft@false\let\veqno\@@eqno}
\newtheorem{thm}{Theorem}[section]
\newtheorem{lem}[thm]{Lemma}
\newtheorem{prop}[thm]{Proposition}
\theoremstyle{definition}
\newtheorem{defn}[thm]{Definition}
\theoremstyle{remark}
\newtheorem*{rem}{Remark}
\newtheorem*{ex}{Example}
\numberwithin{equation}{section}
\newcommand{\N}{{\mathbb N}}
\newcommand{\R}{{\mathbb R}}
\newcommand{\Rp}{\R_+}
\newcommand{\RR}{\R_+^{1+d}}
\newcommand{\oRR}{\overline\R_+^{1+d}}
\newcommand{\C}{{\mathbb C}}
\newcommand{\Z}{{\mathbb Z}}
 \newcommand{\p}{\partial}
\newcommand{\cA}{{\mathcal A}}
\newcommand{\cB}{{\mathcal B}}
\newcommand{\cC}{{\mathscr C}}
\newcommand{\cL}{{\mathscr L}}
\newcommand{\cH}{{\mathscr H}}
\newcommand{\cK}{{\mathscr K}}
\newcommand{\cS}{{\mathscr S}}
\newcommand{\cl}{{\textnormal{cl}}}
\newcommand{\cc}{{\textnormal{c}}}
\newcommand{\loc}{{\textnormal{loc}}}
\newcommand{\f}{\frac}
\newcommand{\ve}{\varepsilon}
\newcommand{\As}{\underline{\textup{As}}}
\newcommand{\ler}{\langle\eta\rangle}
\newcommand{\ii}{\textup{i}}
\newcommand{\ee}{\textup{e}}
\newcommand{\dd}{\textup{d}}
\renewcommand{\dbar}{\dd\hspace*{-0.14em}\bar{}\hspace*{0.18em}}
\begin{document}
%% 

%%---------------------------------------------------------------------------------------

\title%%[Totally characteristic hyperbolic boundary problems]
%%{Local theory of t
  %%{Totally characteristic hyperbolic boundary problems}
{Hyperbolic problems with totally characteristic boundary}
\author[Z.-P.~Ruan]{Zhuoping Ruan}
\address{Department of Mathematics and Institute of Mathematical Sciences,
  Nanjing University, Nanjing, 210093, China}
\email{zhuopingruan@nju.edu.cn}

\author[I.~Witt]{Ingo Witt}
\address{Mathematical Institute, University of G\"{o}ttingen,
  Bunsenstr.~3-5, 37073 G\"ottingen, Germany}
\email{iwitt@uni-math.gwdg.de}

\date{\today}

\keywords{First-order hyperbolic systems, totally characteristic
  boundary, asymptotic expansions of conormal type, discrete
  asymptotic types, Sobolev spaces with asymptotics, calculus of
  cone-degenerate pseudo-differential operators, holomorphic conormal
  symbols, singular analysis}

\subjclass[2010]{Primary: 35L04; Secondary: 35L80, 35S05}

\thanks{The first author was supported by NSFC grant 11771206. Part of
  the work was done while she was visiting the University of
  G\"ottingen.}
% \date{\today}

%%---------------------------------------------------------------------------------------

\begin{abstract}
We study first-order symmetrizable hyperbolic $N\times N$ systems in a
spacetime cylinder whose lateral boundary is totally
characteristic. In local coordinates near the boundary at $x=0$,
these systems take the form
\[
  \p_t u + \cA(t,x,y,xD_x,D_y) u = f(t,x,y), \quad
  (t,x,y)\in(0,T)\times\R_+\times\R^d,
\]
where $\cA(t,x,y,xD_x,D_y)$ is a first-order differential operator
with coefficients smooth up to $x=0$
%%$\cA\in
%%S_\cl^1([0,T]\times\oRR\times\R^{1+d};\operatorname{Mat}_{N\times
%%N}(\C))$
and the derivative with respect to $x$ appears in the
combination $xD_x$. No boundary conditions are required in such a
situation and corresponding initial-boundary value problems are
effectively Cauchy problems.

We introduce a certain scale of Sobolev spaces with asymptotics and
show that the Cauchy problem for the operator $\p_t +
\cA(t,x,y,xD_x,D_y)$ is well-posed in that scale. More specifically,
solutions $u$ exhibit formal asymptotic expansions of the form
\[
   u(t,x,y) \sim \sum_{(p,k)} \f {(-1)^k} {k!}x^{-p} \log^k \!x \,
   u_{pk}(t,y) \quad \text{as $x\to+0$}
\]
where $(p,k)\in\C\times\N_0$ and $\Re p\to-\infty$ as
$|p|\to\infty$, provided that the right-hand side $f$ and the
initial data $u|_{t=0}$ admit asymptotic expansions as $x \to +0$ of a
similar form, with the singular exponents~$p$ and their multiplicities
unchanged. In fact, the coefficient $u_{pk}$ are, in general, not
regular enough to write the terms appearing in the asymptotic
expansions as tensor products. This circumstance requires an
additional analysis of the function spaces. In addition, we
demonstrate that the coefficients~$u_{pk}$ solve certain explicitly
known first-order symmetrizable hyperbolic systems in the lateral
boundary.

Especially, it follows that the Cauchy problem for the operator
$\partial_t+\cA(t,x,y,xD_x,D_y)$ is well-posed in the scale of
standard Sobolev spaces $H^s((0,T)\times\R_+^{1+d})$.

%Neglecting the dependence on $t$ and regarding the closed half-space
%$\oRR$ as the blowup of a manifold with one conic point and base
%$\R^d$, the differential operator $\cA(t,x,y,xD_x,D_y)$ is
%cone-degenerate. Consequently, in order to symmetrize the system
%under investigation we utilize a calculus of cone-degenerate
%pseudo-differential operators. As the asymptotic types underlying the
%asymptotic expansions as $x\to+0$ are unaltered under the action of
%$\cA(t,x,y,xD_x,D_y)$, the operators in the calculus are special in
%the sense that all their conormal symbols are holomorphic (this in
%comparision to general cone-degenerate pseudo-differential operators,
%where conormal symbols are finitely meromorphic).
\end{abstract}

\maketitle

%%\tableofcontents

%% ---------------------------------------------------------------------------------------

\section{Introduction}

%% ---------------------------------------------------------------------------------------

%%\subsection{General overview}

Due to their importance in the physical and engineering sciences, the
investigation of hyperbolic initial-boundary problems has a
long-standing history. Depending on the hyperbolic differential
operators under study, the main questions concern the correct number
and kind of boundary conditions to be imposed and well-posedness of
the resulting initial-boundary problems in suitable scales of function
spaces. See \textsc{Benzoni-Gavage and Serre} \cite{BGS2007} for a
recent account. In case of a non-characteristic boundary, it is known
that the weak Lopatinskii condition is necessary for well-posedness,
while the uniform Lopatinskii condition has been shown by
\textsc{Lopatinskii} \cite{Lop1970}, \textsc{Kreiss} \cite{Kre1970},
and \textsc{Sakamoto} \cite{Sak1982} to be necessary and sufficient in
order to obtain the strongest possible regularity results, comparable
to those one has for the pure Cauchy problem. See also
\textsc{Chazarain and Piriou} \cite{CP1982}. The understanding of the
characteristic case is considerably less complete. There are many
works contributing to the uniformly characteristic case, especially
for first-order systems when the differential operators under study
are symmetric hyperbolic, see e.g.~\textsc{Majda and Osher}
\cite{MO1975}, \textsc{Ohkubo} \cite{Ohk1981}, \textsc{Rauch}
\cite{Rau1985}, or \textsc{Secchi} \cite{Sec1996}. In these results,
one often has more regularity in directions tangent to the boundary
than in directions transverse to it. By contrast, for a totally
characteristic boundary, one has the same regularity in all
directions, as observed already by \textsc{Sakamoto} \cite{Sak1989}.

This observation was our point of departure. We investigate
symmetrizable hyperbolic first-order differential systems in
space-time cylinders $(0,T)\times\Omega$, where $\Omega\subseteq\R^n$
is a $\cC^\infty$ domain (or, more general, $\overline\Omega$ is a
$\cC^\infty$ manifold with non-empty boundary) and where the lateral
boundary $(0,T)\times\partial\Omega$ is totally characteristic. The
main technical innovation is to regard the differential operators
under study as cone-degenerate with respect to the spatial variables,
which in turn is possible due to the totally characteristic
boundary. As a consequence, using a suitable calculus for
cone-degenerate pseudodifferential operators (detailed below), we
construct symmetrizers for the systems under consideration and, as a
result of the symmetrization process, are able to establish
well-posedness in so-called Sobolev spaces
$H_{P,\theta}^{s,\delta}(\overline\Omega)$ with asymptotics (also
detailed below). Here, the asymptotics alluded to are discrete
conormal asymptotics, given by an asymptotic type $P$. Special cases
include the standard Sobolev spaces $H^s(\Omega)$ and
$H_0^s(\overline\Omega)$. Another relevant case is when the function
spaces carry no asymptotic information at all, i.e., when
$\theta=0$. Then the asymptotic type~$P$ is redundant and
$H_{P,0}^{s,\delta}(\overline\Omega) = \cK^{s,\delta}(\Omega)$ is a
weighted Sobolev space.

It turns out that in the situation considered no boundary conditions
are required. One main contribution of this paper is the revelation
that the boundary traces of the solutions themselves satisfy
hyperbolic differential equations in the lateral boundary
$(0,T)\times\partial\Omega$. In particular, these boundary traces can
be determined ahead of determining the solutions.

There is a long-running program of investigating conormal asymptotic
expansions of solutions to elliptic partial differential equations as
an integral part of the structure, initiated by different people, see
e.g. \textsc{Melrose} \cite{Mel1993} or \textsc{Rempel and
  Schulze}~\cite{RS1989}. Recently, there have been attempts to extend
this program to include hyperbolic partial differential equations, see
e.g.~\textsc{Hintz and Vasy} \cite{HV2020}.  This paper can also be
seen as a contribution in this direction.

%% ---------------------------------------------------------------------------------------

\subsection{Formulation of the problem and main results}

In this paper, we investigate well-posedness of the Cauchy
problem for first-order hyperbolic systems with totally characteristic
boundary. More specifically, we consider the Cauchy problem for
$N\times N$ systems
\begin{equation}\label{ibvp}
\left\{ \enspace
\begin{aligned}
  \partial_t u + \cA(t,\varpi,D_\varpi) u
  &=
  f(t,\varpi), \quad (t,\varpi)\in (0,T)\times \Omega, \\
  u\bigr|_{t=0} &= u_0(\varpi),
\end{aligned}
\right.
\end{equation}
where $\cA\in\cC^\infty([0,T];\operatorname{Diff}^1(\overline\Omega;
\C^N))$, $\overline\Omega$ is $\cC^\infty$ manifold with non-empty
boundary $\partial\Omega$, and
$\Omega=\overline\Omega\setminus\partial\Omega$. Our standing
assumptions are that the differential operator \leqnomode
\begin{equation}
  \text{$\cL=\partial_t + \cA(t,\varpi,D_\varpi)$ is
    symmetrizable hyperbolic} \tag{\textbf{A1}}
\end{equation}
and that the lateral boundary
\begin{equation}
  \text{$(0,T)\times \partial\Omega$ is totally characteristic for
    $\cL$.} \tag{\textbf{A2}}
\end{equation}
The latter condition means that
$\sigma_\psi^1(\cA)(t,\varpi,\nu(\varpi))=0$ for $(t,\varpi)\in
[0,T]\times \partial\Omega$, where $\sigma_\psi^1(\cA)$ is the
principal symbol of $\cA$ and $\nu(\varpi) \in T_\varpi^*
\overline\Omega$ is conormal with respect to the boundary
$\partial\Omega$ (i.e., $\nu(\varpi)\bigr|_{T_\varpi
  (\partial\Omega)}=0$). \textit{A first observation is that the
  characteristic curves of $\cL$ stay inside the lateral boundary when
  they started out there. In particular, they are tangent to the
  boundary.} Consequently, no boundary conditions are required in
order to solve Eq.~\eqref{ibvp}. Besides, there is no need for a
Lopatinskii condition or an replacement of it in one or the other
form.

\reqnomode

%% ---------------------------------------------------------------------------------------

\subsubsection{The result for standard Sobolev spaces}

We begin with describing the result for the standard Sobolev spaces
$H^s(\Omega)$, where $\Omega\subseteq\R^n$ is a $\cC^\infty$ domain and
$s\geq0$. This case deserves special attention for two reasons:
firstly, the proof here is considerably simpler than in the general
case, secondly, it helps to develop some additional intuition for the
problems studied later.

\begin{thm}\label{thm1}
Suppose that the differential operator $\partial_t+\cA$ in\/
\textup{Eq.~\eqref{ibvp}} has coefficients in
$\cC_b^\infty([0,T]\times\overline\Omega; \operatorname{Mat}_{N\times
  N}(\C))$ and that it is symmetrizable hyperbolic uniformly in
$(t,\varpi)\in[0,T]\times\overline\Omega$. Let $u_0\in
H^{s+\sigma}(\Omega;\C^N)$ and $f\in \bigcap_{r=0}^\sigma
W^{r,1}((0,T);$ $H^{s-r+\sigma}(\Omega;\C^N))$ for some $s\geq0$,
$\sigma\in\N_0$. Then\/ \textup{Eq.~\eqref{ibvp}} possesses a unique
solution
\[
  u\in\bigcap_{r=0}^\sigma \cC^r([0,T];H^{s-r+\sigma}(\Omega;\C^N)).
\]
In addition, the boundary traces
\[
  \gamma_\ell u=\frac1{\ell!}\,\frac{\partial^\ell
    u}{\partial\nu^\ell}\Bigr|_{(0,T)\times\partial\Omega}\in
  \bigcap_{\substack{r\leq \sigma,\\\ell+r<s+\sigma-1/2}}\cC^r([0,T];
  H^{s-\ell-r+\sigma-1/2}(\partial\Omega;\C^N))
\]
for $\ell\in\N_0$, $\ell <s+\sigma-1/2$ \textup{(}defined by extending
$\nu$ to a $\cC^\infty$ vector field in a neighborhood of
$\partial\Omega$\textup{)} are uniquely determined as solutions to
certain hyperbolic Cauchy problems in $(0,T)\times\partial\Omega$.
\end{thm}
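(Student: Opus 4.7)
The plan is to treat $\p_t+\cA$ as a symmetric hyperbolic system whose characteristic boundary is of the particularly well-behaved \emph{totally characteristic} type, and then to extract the trace equations by matching coefficients in a formal Taylor expansion in the normal variable. First I localize near $\p\Omega$: in coordinates $(x,y)$ with $\p\Omega=\{x=0\}$, the operator takes the form $\cA = B(t,x,y)xD_x + C(t,x,y)D_y + E(t,x,y)$ with smooth matrix coefficients. Let $S(t,\varpi)$ be a smooth symmetrizer supplied by (A1). The basic $L^2$-energy identity for $\langle Su,u\rangle$ closes because, upon integrating by parts the top-order term $\langle SBxD_xu,u\rangle$, the boundary integral over $\p\Omega$ carries a factor $xB\bigr|_{x=0}=0$ and therefore vanishes, so no boundary condition is needed. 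Existence is then obtained by extending $B,C,E,f,u_0$ smoothly across $\p\Omega$ (say to the double $\widetilde\Omega\supset\overline\Omega$), solving the resulting classical symmetrizable hyperbolic Cauchy problem on $\widetilde\Omega$, and restricting to $\Omega$; the $L^2$-estimate on $\Omega$ ensures that the restriction is independent of the choice of extension.

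To promote this to the $H^{s+\sigma}$-scale I would induct on the total regularity. Tangential differentiation is harmless because $[\p_t,xD_x]=[\p_y,xD_x]=0$, so $\p_t^j\p_y^\alpha u$ satisfies an equation of exactly the same form with a right-hand side one order rougher. The delicate case is the normal derivative $w=\p_x u$: the identity $[\p_x,BxD_x]=(\p_x B)xD_x + BD_x$ introduces the term $BD_x=-\ii B\p_x$, which is \emph{not} a $b$-operator, so at first sight
\[
  (\p_t + \cA - \ii B)w = \p_x f - (\p_x B)xD_x u - (\p_x C)D_y u - (\p_x E)u
\]
falls outside the totally characteristic calculus. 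The rescue is that $BD_x$ is of order zero relative to $\p_t+\cA$, so the above equation for $w$ is again symmetrizable hyperbolic with the same totally characteristic principal part, and the inductive hypothesis applies. General real $s\geq 0$ follows by interpolation, and the time regularity $\cC^r([0,T];H^{s-r+\sigma}(\Omega;\C^N))$ is read off from $\p_t u = f - \cA u$ together with the $W^{r,1}$-in-time hypothesis on $f$.

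For the boundary traces, the Sobolev trace theorem applied slice-wise in $t$ places $\gamma_\ell u$ in the spaces asserted in the statement. To obtain the hyperbolic Cauchy problems they satisfy, I would expand formally in $x$: write $u\sim\sum_m x^m u_m(t,y)$ with $u_m=\gamma_m u$, and similarly $B\sim\sum_k x^k B_k(t,y)$, $C\sim\sum_k x^k C_k(t,y)$, $E\sim\sum_k x^k E_k(t,y)$, $f\sim\sum_m x^m f_m(t,y)$. Using $xD_x(x^m v) = -\ii m\, x^m v$ and Leibniz, the coefficient of $x^m$ in the equation yields
\[
  \p_t u_m + \bigl(C_0(t,y)D_y + E_0(t,y) - \ii m B_0(t,y)\bigr)u_m = f_m - G_m(u_0,\dots,u_{m-1}), \quad u_m\bigr|_{t=0}=\gamma_m u_0,
\]
with $G_m$ an explicit expression in the lower-order traces. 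The principal part $\p_t + C_0(t,y)D_y$ is precisely the restriction of $\sigma_\psi^1(\cL)$ to $T^*(\p\Omega)$, and $S(t,0,y)$ inherits the symmetrizer property from $S$; hence each stage is a symmetrizable hyperbolic Cauchy problem on $(0,T)\times\p\Omega$, solved recursively in $m$, thereby determining all traces uniquely.

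The main technical obstacle I foresee is the normal-derivative step in the $H^s$-induction: one must recognize that the apparently dangerous commutator $[\p_x,xD_x]=D_x$ contributes only to the zeroth-order part of the new equation and therefore preserves both symmetrizable hyperbolicity and the totally characteristic structure. Once this is in place, the remainder --- consistency of the extension-restriction procedure, incorporation of the $W^{r,1}$-in-time sources, and the recursive construction of the $u_m$ --- is standard hyperbolic machinery.
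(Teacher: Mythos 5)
Your overall strategy matches the paper's two-step proof of Proposition~2.1: extend the coefficients and data smoothly across $\partial\Omega$, solve the resulting Cauchy problem on all of $\R^n$ by classical hyperbolic theory, restrict to $\Omega$, and then read off the trace equations by formally Taylor-expanding the equation in the normal variable $x$ (which reproduces Eq.~\eqref{trace1}). Your observation that the $L^2$ boundary integral vanishes because of the explicit factor $x$ provides one justification that the restriction is independent of the chosen extension; the paper argues the same fact geometrically, via tangency of the characteristics to $\partial\Omega$.

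What is superfluous, and in fact as written does not close, is the commutator induction for $H^{s+\sigma}$-regularity. It is superfluous because the extended whole-space solution $U$ already lies in $\bigcap_{r\le\sigma}\cC^r([0,T];H^{s-r+\sigma}(\R^n;\C^N))$ by the classical Cauchy theory, and the restriction $u=U|_\Omega$ inherits this at once. It does not close because the inductive step loses a derivative: the equation you derive for $w=\partial_x u$ still carries a source containing $(\partial_x C)D_y u$ and $(\partial_x E)u$, which are first order in $u$; if the inductive hypothesis only gives $u\in\cC([0,T];H^{s})$, these lie in $\cC([0,T];H^{s-1})$, and the $H^{s-1}$-theory then returns $w\in\cC([0,T];H^{s-1})$ rather than $w\in\cC([0,T];H^{s})$ --- circular. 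Your observation that $BD_x u=-\ii Bw$ (and likewise $(\partial_x B)\,xD_x u=-\ii x(\partial_x B)w$) is a zeroth-order term \emph{in $w$} is correct and preserves the symmetrizable totally-characteristic \emph{form} of the $w$-equation, but it does not repair the derivative count of the remaining source. The standard repair --- treating all $\partial^\alpha u$ with $|\alpha|\le m$ simultaneously as one big symmetric hyperbolic system with zeroth-order coupling, or using a pseudodifferential order reduction together with a commutator estimate and Gronwall --- is not what you wrote. Since the extension already supplies the regularity, the cleanest fix is simply to delete that passage; the rest of your argument is the paper's own and is correct.
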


See \eqref{asymp_terms}, \eqref{trace1} for the explicit form of the
hyperbolic Cauchy problems in $(0,T)\times\partial\Omega$ governing
the boundary traces~$\gamma_\ell u$. Especially, when $\gamma_\ell
u_0=0$ and $\gamma_\ell f=0$ for $\ell\in\N_0$, $\ell <s+\sigma-1/2$,
then it follows that $\gamma_\ell u=0$ for all those
$\ell$. Consequently, Theorem~\ref{thm1} remains valid when the
Sobolev spaces $H^s(\Omega)$, where $s\geq0$, are replaced with
$H_0^s(\overline\Omega)$.

\begin{rem}
The latter observation is actually one of the guiding principles in
what follows. Start with the standard Sobolev spaces $H^s(\Omega)$,
remove the asymptotic terms arising from a Taylor series expansion at
the boundary $\partial\Omega$ to arrive at the spaces
$H_0^s(\overline\Omega)$, then adjust the reference conormal order
from $0$ to $\delta$ (see, e.g., Lemma~\ref{lem_310}) and affix
asymptotic terms once again, now possibly of a different asymptotic
type $P$. This yields the function spaces
$H_{P,\theta}^{s,\delta}(\overline\Omega)$ in which well-posedness for
Eq.~\eqref{ibvp} will be shown to hold as well.
\end{rem}

\begin{rem}
The regularity of the boundary traces $\gamma_\ell u$ results from the
trace theorems for the standard Sobolev spaces. It is a half an order
less than what is obtained in case of a non-characteristic boundary
when the uniform Lopatinskii condition hold and the usual
compatibility conditions between initial and boundary data are
satisfied.
\end{rem}

%% ---------------------------------------------------------------------------------------

\subsubsection{The local problem}\label{rlp}

Most of the effort in this paper is put into the local situation,
where $\overline\Omega=\overline\R_+^{\,n}$ is a closed half-space. We
state the result in this situation next.

We assume that the coefficients of the differential operator $\cA$
belong to $\cC_b^\infty([0,T]\times\overline{\R}_+^{\,n};$ \linebreak
$\operatorname{Mat}_{N\times N}(\C))$. We further set $n=1+d$ and
write the spatial coordinates as
$(x,y)\in\overline\R_+\times\R^d$. Then the Cauchy problem to be
investigated becomes
\begin{equation}\label{ibvp2}
\left\{ \enspace
\begin{aligned}
  \partial_t u + x\,A(t,x,y)\partial_x u +
  \sum_{j=1}^d A_j(t,x,y) \partial_j u  + B(t,x,y)u &=
  f(t,x,y), \\
  u\bigr|_{t=0} &= u_0(x,y),
\end{aligned}
\right.
\end{equation}
where $(t,x,y)\in (0,T)\times \RR$. Here,
$\partial_j=\partial/\partial y_j$,
$xA,A_j,B\in\cC_b^\infty([0,T]\times\oRR;\operatorname{Mat}_{N\times
  N}(\C))$, and $A$ is $\cC^\infty$ up to $x=0$. Further, we assume that the
linear differential operator $\cL=\partial_t + xA\partial_x +
\sum_{j=1}^d A_j\partial_j +B$ is symmetrizable hyperbolic uniformly
in $(t,x,y)\in[0,T]\times\oRR$.

\smallskip

Having concrete applications in mind, apart from asymptotics resulting
from a Taylor series expansion at $x=0$ as in Theorem~\ref{thm1},
\begin{equation}\label{taylor}
  u(t,x,y) \sim \sum_{\ell\in\N_0} \frac{x^\ell}{\ell!}\, u_\ell(t,y)
  \quad \text{as $x\to+0$,}
\end{equation}
we consider more general asymptotics of
the form
\begin{equation}\label{asp1}
  u(t,x,y) \sim \sum_{(p, k)} \frac{(-1)^k}{k!}\,x^{-p} \log^k\! x\,
  u_{p k}(t,y) \quad \text{as $x\to+0$,}
\end{equation}
where $(p,k) \in \C\times\N_0$ with $\Re p\to -\infty$ as $|p|\to
\infty$. (The precise conditions are stated in
Definition~\ref{Def3.1}.) Such asymptotics arise in many applications,
both linear and nonlinear. The choice of the exponent $-p$ (in place
of~$p$) and the appearance of the factor $(-1)^k/k!$ is related to the
normalization of the Mellin transform (see Appendix~\ref{a1}) and simplifies
certain formulas later on, especially \eqref{opmh}, \eqref{gamma_au}. We shall
denote the uniquely determined coefficients $u_{pk}$ in those
asymptotic expansions by $\gamma_{pk}u$ and \textit{regard these coefficients
as boundary traces as before}.

Incorporating the asymptotic information provided by \eqref{asp1} into
function spaces $H_{P,\theta}^{s,\delta}(\oRR)$ \linebreak (neglecting the
dependence on $t$ at this point, see Section~\ref{SSWA}), the main
result of this paper is as follows: 

\begin{thm}\label{thm2}
Let $s\in\R$, $\sigma\in\N_0$, $P\in\underline{\textup{As}}^\delta$,
and $\theta_0\geq\dotsc\geq\theta_\sigma\geq0$.  Under the assumptions
stated above, given $u_0\in H_{P,\theta_0}^{s+\sigma,\delta}(\oRR;\C^N)$
and $f\in \bigcap_{r=0}^\sigma W^{r,1}((0,T);$
$H_{P,\theta_r}^{s-r+\sigma,\delta}(\oRR;\C^N))$,
\textup{Eq.~\eqref{ibvp}} possesses a unique solution
\[
  u\in
  \bigcap_{r=0}^\sigma\cC^r([0,T];H_{P,\theta_r}^{s-r+\sigma,\delta}(\oRR;\C^N)).
\]
In addition, for all $(p,k)\in P$ with $\Re p>1/2-\delta-\theta_0$,
$\gamma_{pk}u$ solves the Cauchy problem
\begin{equation}\label{asymp_terms}
\left\{ \enspace
\begin{aligned}
  &\partial_t(\gamma_{pk}u) + \sum_{j=1}^d
  A_j(t,0,y)\partial_j(\gamma_{pk}u) +\left(- p
  A(t,0,y)+B(t,0,y)\right)\gamma_{pk}u \\
  & \hspace*{9cm}= (\gamma_{pk}f)(t,y) + R_{pk}[u](t,y), \\
  & (\gamma_{pk}u)\bigr|_{t=0} = (\gamma_{pk}u_0)(y),
\end{aligned}
\right.
\end{equation}
where the expression $R_{pk}[u]$ appearing in the right-hand side of
\eqref{asymp_terms} stands for a term that depends linearly on
$\gamma_{ql}u$ for $q - p\in \N_0$ and $l>k$ if $q=p$.
\end{thm}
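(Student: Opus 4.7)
My plan is to build on Theorem~\ref{thm1} by first establishing well-posedness in the weighted cone-Sobolev scale $\cK^{s,\delta}(\RR;\C^N)$ without any asymptotic information, and then upgrading to the scale $H_{P,\theta}^{s,\delta}$ with asymptotics. The key tool for the base case is to view $\cA(t,x,y,xD_x,D_y)$ as a first-order cone-degenerate operator and to use the available calculus of cone-degenerate pseudodifferential operators on $\oRR$ to construct, for each $t\in[0,T]$, a self-adjoint zeroth-order symmetrizer $\cS(t)$ that is positive-definite on $\cK^{0,\delta}$ and such that $\cS\cA+\cA^*\cS$ is of order~$0$, with adjoints taken in the $\cK^{0,\delta}$-inner product. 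Existence of such $\cS$ follows from the symmetrizable-hyperbolic assumption on $\cL$, uniformly in $(t,x,y)\in[0,T]\times\oRR$: the symmetrizer is first built at the principal-symbol level and then quantized in the cone-degenerate calculus. Energy estimates derived from $\cS$ yield an a~priori bound, uniqueness and, via duality for the adjoint problem together with Friedrichs-type mollifiers compatible with the calculus, existence in $\cC^r([0,T];\cK^{s-r+\sigma,\delta})$ for arbitrary $s,\delta\in\R$ and $\sigma\in\N_0$.

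To incorporate the asymptotic type $P$, I would derive the boundary systems \eqref{asymp_terms} by formal substitution of \eqref{asp1} into \eqref{ibvp2} and matching of the coefficient of $\f{(-1)^k}{k!}\,x^{-p}\log^k\! x$. Indeed, $x\p_x$ applied to $x^{-p}\log^k\! x$ produces $-p\,x^{-p}\log^k\! x + k\,x^{-p}\log^{k-1}\! x$, while Taylor expansion at $x=0$ of the coefficients of $\cA$ generates contributions to the exponents $-p+n$, $n\in\N_0$. The first mechanism gives rise to the diagonal term $-p\,A(t,0,y)$ in \eqref{asymp_terms} and produces the $l=k+1$ piece of $R_{pk}[u]$, while the second contributes the $q=p+n$, $n\geq1$ pieces. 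Each system \eqref{asymp_terms} is itself first-order symmetrizable hyperbolic on $(0,T)\times\R^d$, with symmetrizer the trace of $\cS$ at $x=0$; it therefore admits a unique solution in the $y$-Sobolev regularity dictated by the trace theorem. The systems are solved inductively: in decreasing order of $\Re p$ over those $(p,k)\in P$ with $\Re p>1/2-\delta-\theta_0$, and for each fixed $p$ in decreasing order of $k$, so that whenever $\gamma_{pk}u$ is being solved for, the right-hand side $R_{pk}[u]$ has already been produced.

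With all the $\gamma_{pk}u$ thus determined, I would assemble an explicit asymptotic corrector $v(t,x,y)$, supported near $x=0$, whose conormal expansion realises exactly these coefficients. Subtracting $v$ from the putative solution reduces the problem to one with data in the smaller spaces with trivial asymptotics, to which the base case applies once the reference weight has been shifted from $\delta$ to $\delta+\theta_r$. Reassembling $u=v+(u-v)$ then yields a solution in $\cC^r([0,T];H_{P,\theta_r}^{s-r+\sigma,\delta}(\oRR;\C^N))$ for all admissible $r$, while uniqueness follows from uniqueness in $\cK^{s,\delta}$.

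The main technical obstacle I anticipate is twofold. First, constructing the cone-degenerate symmetrizer requires the calculus to supply holomorphic Mellin conormal symbols, edge-compatible composition and adjoint formulae relative to the weighted measure, and control of the lower-order errors that is compatible with the weight $\delta$ and does not degrade the energy estimate; the positivity of $\cS$ must moreover be preserved under the symmetrisation process uniformly in $t$. Second, as the authors emphasize, the coefficients $u_{pk}(t,y)$ need not be smooth in $y$: they lie only in the Sobolev class provided by the trace-type theorem for $H_{P,\theta}^{s,\delta}$, and this limited regularity must be tracked consistently both when solving \eqref{asymp_terms} with reduced-regularity coefficients and when manufacturing the corrector $v$ from what are no longer tensor products. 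Closing the induction on $(p,k)\in P$ together with these regularity bookkeeping issues accounts for the bulk of the technical work.
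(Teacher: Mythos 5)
Your proposal follows essentially the same route as the paper: build a zeroth-order symmetrizer in the cone-degenerate calculus, obtain energy estimates and existence via duality in $\cK^{s,\delta}(\RR;\C^N)$ using order reductions, derive the hyperbolic trace equations \eqref{asymp_terms} from the action of the conormal symbols of $\cA$ on the asymptotic terms, solve them inductively (decreasing $\Re p$, then decreasing $k$), and subtract the corrector $\sum\Gamma_{pk}(\gamma_{pk}u)$ to reduce to the weighted-Sobolev base case at the shifted weight $\delta+\theta$. The only cosmetic difference is that you derive \eqref{asymp_terms} by formal coefficient matching where the paper invokes its precise formula $\gamma_{pk}(Au)=\sum_{j,\ell,r}\frac{1}{r!}\partial_z^r\sigma_\cc^{-j}(A)(p+j)\gamma_{p+j,\ell}(u)$; the two are equivalent once one knows that $\Psi_\cc$-operators with holomorphic conormal symbols act type-preservingly on $H_{P,\theta}^{s,\delta}$.
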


Notice that Eq.~\eqref{asymp_terms} is a hyperbolic Cauchy problem for
$\gamma_{pk}u$ in $(0,T)\times\R^d$. It follows that the coefficients
$\gamma_{pk}u$ in the asymptotic expansion \eqref{asp1} can be
successively computed and are uniquely determined by the corresponding
coefficients of the initial data $u_0$ and the right-hand side
$f$.

\begin{rem}
Theorem~\ref{thm1} is a special case of Theorem~\ref{thm2} when
$\Omega=\RR$. Here, $s\geq0$, $\delta=0$, $P=P_0$, and
$\theta_r=s-r+\sigma$ for $0\leq r\leq \sigma$ (see the example after
Definition~\ref{Def3.1}).
\end{rem}

\begin{rem}
Strictly speaking, we only deal with constant discrete asymptotics. It
is likely that similar results also hold for continuous asymptotics as
well as for variable discrete asymptotics (see, e.g.,
\cite{KM2016,HMST2015,RS1989} for elliptic problems).
\end{rem}

%% ---------------------------------------------------------------------------------------

\subsubsection{Further results}

The situation described in Section~\ref{rlp} is invariant under
coordinate changes for manifolds with boundary. A proof will appear in
\cite{LRW2022}. Hence, one also has the function spaces
$H_{P,\theta,\loc}^{s,\delta}(\overline\Omega)$ when $\overline\Omega$
is a $\cC^\infty$ manifold with boundary. In local coordinates,
elements of $H_{P,\theta,\loc}^{s,\delta}(\overline\Omega)$ belong
(locally) either to $H^s(\R^n)$ for inner charts or to
$H_{P,\theta}^{s,\delta}(\overline\R_+^n)$ for boundary charts.

\begin{thm}\label{thm3}
Let $s\geq0$, $\sigma\in\N_0$, $P\in\As^\delta$, and $\theta_0\geq
\dotsc \geq \theta_\sigma\geq0$. Furthermore, let $u_0\in
H_{P,\theta_0,\loc}^{s+\sigma,\delta}(\overline\Omega;\C^n)$ and $f\in
\bigcap_{r=0}^\sigma
W^{r,1}((0,T);H_{P,\theta_r,\loc}^{s-r+\sigma,\delta}(\overline\Omega;\C^n))$. Then\/
\textup{Eq.~\eqref{ibvp}} possesses a unique solution
\[
  u\in \bigcap_{r=0}^\sigma
  \cC^r([0,T];H_{P,\theta_r,\loc}^{s-r+\sigma,\delta}(\overline\Omega;\C^n)).
\]
Moreover, the boundary traces $\gamma_{pk}u$ for $(p,k)\in P$ with
$\Re p>1/2-\delta-\theta_0$ can be successively computed as before by
solving hyperbolic Cauchy problems in the lateral boundary
$(0,T)\times\partial\Omega$.
\end{thm}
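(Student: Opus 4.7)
The plan is to reduce Theorem~\ref{thm3} to Theorem~\ref{thm2} by a standard localization procedure, relying on coordinate invariance of the spaces $H_{P,\theta,\loc}^{s,\delta}(\overline\Omega)$ (cf.~\cite{LRW2022}) and finite propagation speed for~$\cL$. I would cover $\overline\Omega$ by an atlas of interior charts (modeled on open subsets of $\R^n$) and boundary charts (modeled on open subsets of $\overline{\R}_+^{\,n}$), and choose a subordinate $\cC^\infty$ partition of unity $\{\chi_j\}$. In each interior chart the problem reduces to a symmetrizable hyperbolic Cauchy problem on $\R^n$ handled by classical theory, while in each boundary chart it takes the form \eqref{ibvp2} and is covered by Theorem~\ref{thm2}.

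For a priori estimates, I would apply the chart-wise energy estimates to $\chi_j u$. Since $[\cL,\chi_j]$ is a zeroth-order multiplication operator with smooth, compactly supported coefficients (the commutator of a cone-degenerate vector field with a smooth cutoff being itself smooth and vanishing to first order at $\partial\Omega$), it acts boundedly on every local $H_{P,\theta_r}^{s-r+\sigma,\delta}$ space, so the error it generates is absorbed via Gronwall's inequality. Summing over $j$ yields the global norm estimate in $\cC^r([0,T];H_{P,\theta_r,\loc}^{s-r+\sigma,\delta}(\overline\Omega;\C^N))$, and uniqueness follows immediately.

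For existence, I would use finite propagation speed to patch local solutions. For a boundary chart with diffeomorphism $\kappa_j\colon U_j\to V_j\subseteq\overline{\R}_+^{\,n}$, I would extend the push-forward operator $(\kappa_j)_\ast\cL$ and the restricted data smoothly to all of $(0,T)\times\overline{\R}_+^{\,n}$, preserving symmetrizability and the asymptotic type, and apply Theorem~\ref{thm2} to obtain a local solution~$v_j$. On any relatively compact subset of the cone of determinacy over $V_j$, the function $v_j$ is independent of the extensions chosen, so by uniqueness different $v_j$'s agree on overlaps and assemble into a global solution. Interior charts are treated similarly using classical hyperbolic theory. Higher regularity in~$t$ ($\sigma\ge 1$) follows by the usual bootstrap obtained from differentiating \eqref{ibvp} in~$t$.

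For the boundary traces, the coordinate invariance established in~\cite{LRW2022} implies that the coefficients $\gamma_{pk}u$ furnished chart-by-chart by Theorem~\ref{thm2} assemble into intrinsically defined sections on $(0,T)\times\partial\Omega$. \emph{The main obstacle} is then to verify that the local Cauchy problems \eqref{asymp_terms}, whose coefficients $A_j(t,0,y)$ and $-pA(t,0,y)+B(t,0,y)$ appear chart-dependent, actually transform tensorially under boundary coordinate changes modulo contributions that are absorbed into the remainder $R_{pk}[u]$. Once this is checked, the $\gamma_{pk}u$ define a single symmetrizable hyperbolic Cauchy problem on $(0,T)\times\partial\Omega$ and are determined successively exactly as in Theorem~\ref{thm2}.
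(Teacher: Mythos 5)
Your proposal follows the same route as the paper: Theorem~\ref{thm3} is reduced to Theorem~\ref{thm2} via localization, coordinate invariance of the weighted/asymptotic Sobolev spaces (deferred to \cite{LRW2022}), and finite propagation speed; the paper itself devotes only a single sentence to this reduction and also mentions an alternative argument via a global, properly supported cone calculus $\Psi_\cc^\mu(\overline\Omega;\C^N)$, which you do not take. One minor slip in your parenthetical: $[\cL,\chi_j]$ does not vanish to first order at $\partial\Omega$ in general --- the tangential piece $[A_j\partial_j,\chi_j]=A_j(\partial_j\chi_j)$ need not vanish there, only the normal piece $[xA\partial_x,\chi_j]=xA(\partial_x\chi_j)$ does --- but what actually matters, as you correctly conclude, is merely that the commutator is multiplication by a smooth bounded matrix-valued function and hence acts boundedly on each $H_{P,\theta_r,\loc}^{s-r+\sigma,\delta}$. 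The obstacle you flag at the end (tensorial transformation of the trace equations and the mixing of $(p,k)$ under boundary coordinate changes, which is precisely why Definition~\ref{Def3.1}(iv) is imposed) is the content that the paper defers to \cite{LRW2022}, so your treatment is consistent with the paper's.
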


%% ---------------------------------------------------------------------------------------

\subsection{Outline of the argument}

In the sequel, we \textit{fix a $\delta\in\R$ as reference conormal
  order.} The weighted $L^2$ space $\cK^{0,\delta}(\RR)$ (see
Definition~\ref{Def3.6}) will be our basic Hilbert space, replacing
the space $L^2(\RR)=\cK^{0,0}(\RR)$. Denote by $\|\;\|$ the norm and
by $\langle\;,\,\rangle$ the inner product in $\cK^{0,\delta}(\RR)$.

One basic problem is to define the asymptotic terms
appearing in asymptotic expansions like
\begin{equation}\label{asp2}
  v(x,y) \sim \sum_{(p,k)\in P} \frac{(-1)^k}{k!}\,x^{-p}\log^k\!x\,
  v_{pk}(y) \quad \text{as $x\to +0$}
\end{equation}
appropriately (see \eqref{asp1}). The asymptotic expansion
\eqref{asp2} is with respect to an increasing flatness as $x\to+0$,
where the term $(-1)^k/k!\,x^{-p}\log^k\!x\, v_{pk}(y)$ has conormal
order $1/2-\Re p-0$. For $v\in H_{P,\theta}^{s,\delta}(\oRR)$, this
asymptotic expansion breaks off at conormal order $\delta+\theta$ so
that effectively only finitely many terms in the right-hand side of
\eqref{asp2} have to be taken into account. Nonetheless, as $v\in
H_{P,\theta}^{s,\delta}(\oRR)$ implies that both $\Re p<1/2-\delta$
and $v_{pk} \in H^{s+\Re p+\delta-1/2,\langle k\rangle}(\R^d)$, as a
rule we have that $(-1)^k/k!\,x^{-p}\log^k\!x\, v_{pk}(y)\notin
H_{P,\theta}^{s,\delta}(\oRR)$ near $x=0$. The correct form of the
asymptotic term is given by $\Gamma_{pk}v_{pk}$, where
\begin{equation}\label{asp4}
  (\Gamma_{pk}w)(x,y) = \frac{(-1)^k}{k!}\,\mathcal F_{\eta\to
  y}^{-1}\left\{\varphi(x\ler)\hat{w}(y) \right\} x^{-p}\log^k\! x
\end{equation}
and $\varphi$ is a cut-off function. (See Section~\ref{notat} for the
notation used.) Especially, $\Gamma_{pk}w\in \cC^\infty(\RR)$ for
$w\in\cS'(\R^d)$, $\Gamma_{pk}w\bigr|_{x=0} = w$, and $\Gamma_{pk}w$
is supported for $x\lesssim1$. The decisive property which makes the
approach work, however, is
\[
\left\{ \enspace  
\begin{aligned}
  & \Gamma_{pk}w \in \bigcap_{\ve>0} \cK^{s+\ve,1/2-\Re p-\ve}(\RR),
  \\ & \Gamma_{pk}w -\frac{(-1)^k}{k!}\,\varphi(x)x^{-p}\,\log^k\!x \,
  w(y) \in \bigcap_{\ve>0} \cK^{s-\ve,1/2-\Re p+\ve}(\RR)
\end{aligned}
\right.
\]
provided that $w\in H^{s,\langle k\rangle}(\R^d)$ (see Lemma~\ref{Lem3.8}).

\medskip

We construct a calculus $\Psi_\cc^\infty(\oRR) = \bigcup_{\mu\in\R}
\Psi_\cc^\mu(\oRR)$ of cone-degenerate pseudodifferential operators on
the half-space $\RR$, where $\Psi_\cc^\mu(\oRR)\subset
\Psi_\cl^\mu(\RR)$ and the pseudodifferential operators contained
exhibit a prearranged behavior as $x\to +0$. The basic idea is taken
from Schulze~\cite{Sch1991,Sch1998}. In particular, near $x=0$, the
operators $A\in \Psi_\cc^\mu(\oRR)$ are to the leading order of the
form
\[
  A = \varphi(x) \operatorname{op}_M(h) \varphi_0(x),
\]
where $h(z) = h(y,z,D_y)$ is an entire family of pseudodifferential
operators in $\Psi_\cl^\mu(\R^d)$ subject to further conditions,
$\operatorname{op}_M(h) = M_{z\to x}^{-1} h(z) M$ with $M$ being the
Mellin transform, and $\varphi,\varphi_0$ are cut-off
functions. Compared to the cone calculus of Schulze, where the
coefficients $v_{pk}$ would be in $\cS(\R^d)$ in the situation
considered here, we now had to show that operators $A\in
\Psi_\cc^\mu(\oRR)$ act in an appropriate way on the asymptotic terms
given by \eqref{asp4}. Indeed, it holds that
\begin{equation}\label{opmh}
  \varphi \operatorname{op}_M(h)\Gamma_{pk}w - \sum_{r=0}^k
  \frac1{r!}\, \Gamma_{p,k-r}\left[\partial_z^r h(p) \right] \in
  \bigcap_{\ve>0} \cK^{s-\ve,1/2-\Re p+\ve}(\RR)
\end{equation}
provided that $w\in H^{s,\langle k\rangle}(\R^d)$.

The operator $\cA(t,x,y,xD_x,D_y)$ from Eq.~\eqref{ibvp2} belongs to
$\cC^\infty([0,T];\Psi_\cc^1(\oRR;\C^N))$. Furthermore, a symbolic
symmetrizer for the hyperbolic operator $\cL = \partial_t +
\cA(t,x,y,xD_x,D_y)$ is indeed a symmetrizer $b(t,x,y,\tilde\xi,\eta)$
for the compressed principal symbol
$\tilde\sigma_\psi^1(\cA)(t,x,y,\tilde\xi,\eta)$ of $\cA$. Using
G{\aa}rding's inequality in a routine way yields an operator $\cB\in
\cC^\infty([0,T];\Psi_\cc^0(\oRR;\C^N))$ with
$\tilde\sigma_\psi^0(\cB)=b$ such that
\begin{itemize}
\item $\cB = \cB^*\geq c\textup{I}$ for some
  $c>0$,
\item $\Re(\cB\cA) \in \cC^\infty([0,T];\Psi_\cc^0(\oRR;\C^N))$, i.e.,
  $\Re\tilde\sigma_\psi^1(\cB\cA)=0$.
\end{itemize}
Together with the fact that integration by parts produces no boundary terms, i.e., 
\begin{itemize}
\item $\left\langle\cB\cA u,v\right\rangle = \left\langle
  u,(\cB\cA)^*v\right\rangle$ holds for $u,v\in\cC([0,T];\cK^{1,\delta}(\RR))$,
\end{itemize}
one produces, for $u\in\cC^0([0,T];\cK^{1,\delta}(\RR)) \cap
\cC^1([0,T];\cK^{0,\delta}(\RR))$, the \textit{basic energy
  inequality\/}
\begin{equation}\label{energy}
  \max_{0\leq t\leq T} \|u(t)\| \lesssim \|u(0)\| + \int_0^T
  \|\partial_t u(t) + \cA(t)u(t)\|\,\dd t
\end{equation}
in a standard manner. Once the calculus of cone-degenerate
pseudodifferential operators mentioned above is established, this
essentially means that one can treat the Cauchy problem \eqref{ibvp2}
in $(0,T)\times\RR$ like a Cauchy problem in free space
$(0,T)\times\R^{1+d}$.

From estimate \eqref{energy}, one obtains well-posedness of
Eq.~\eqref{ibvp2} in the basic Hilbert space $\cK^{0,\delta}(\RR)$,
i.e., the first part of Theorem~\ref{thm2} for $s=0$, $\sigma=0$, and
$\theta_0=0$. Well-posedness in the weighted Sobolev spaces
$\cK^{s,\delta}(\RR)$, i.e., the first part of Theorem~\ref{thm2} in
all other cases with $\theta_0=0$, then likewise follows using order
reductions that exist in the pseudodifferential calculus considered.

To establish the well-posedness results in the Sobolev spaces
$H_{P,\theta}^{s,\delta}(\oRR)$ with asymptotics is a considerably
more involved task. The crucial observation is that the boundary
traces $\gamma_{pk}u$ solve hyperbolic Cauchy problems in the lateral
boundary. To see this, one needs to know that, besides the compressed
principal symbol $\tilde\sigma_\psi^\mu(A)$, operators
$A\in\Psi_\cc^\mu(\oRR)$ possess also a sequence
$\left(\sigma_\cc^{-j}(A)\right)_{j\in\N_0}$ of so-called conormal
symbols. Like the function $h(z)=h(y,z,D_y)$ above, these are entire
functions of $z\in\C$ taking values in $\Psi_\cl^\mu(\R^d)$, and they
determine the manner in which asymptotics are mapped by~$A$. More
precisely, it holds that
\begin{equation}\label{gamma_au}
  \gamma_{pk}(Au) = \sum_{j\geq0}\sum_{\ell-r=k}
  \frac1{r!}\,\partial_z^r\sigma_\cc^{-j}(A)(p+j)\gamma_{p+j,\ell}(u),
\end{equation}
where the finite sum in the right-hand side is over those $(j,\ell,r)$
such that $\Re p+j<1/2-\delta$. Thus, applying $\gamma_{pk}$ to both
sides of the equation in \eqref{ibvp} results in an equation for
$\gamma_{pk}u$,
\[
\left\{ \enspace
\begin{aligned}
  & \partial_t(\gamma_{pk}u) + \sigma_\cc^0(\cA(t))(\gamma_{pk}u) =
  (\gamma_{pk}f)(t,y) + R_{pk}[u](t,y), \quad
  (t,y)\in(0,T)\times\partial\Omega, \\
  & (\gamma_{pk} u)\bigr|_{t=0} = (\gamma_{pk}u_0)(y),
\end{aligned}
\right.
\]
where the term $R_{pk}[u]$ has a similar meaning as in
\eqref{asymp_terms}. In fact, a compatibility condition between
$\sigma_\psi^1(\sigma_\cc^0(\cA(t)))$ and $\tilde
\sigma_\psi^1(\cA(t))$ ensures that the operator $\partial_t
+\sigma_\cc^0(\cA(t))$ is symmetrizable hyperbolic.

Hence, one obtains existence, uniqueness, and higher regularity for
the boundary traces $\gamma_{pk}$ in the correct regularity
classes. Subtracting the boundary terms from the prospective solution
$u$, one ends up in weighted Sobolev spaces
$\cK^{s-\theta_0,\delta+\theta_0}(\RR)$, in which well-posedness has
been shown before. Note that at this place it is crucial that the
cone-degenerate pseudodifferential operators in $\Psi_\cc^\mu(\oRR)$
have holomorphic conormal symbols (as opposed to finitely meromorphic
ones, one usually sees in a cone pseudodifferential calculus), which
implies that the action of $A\in \Psi_\cc^\mu(\oRR)$ on
$\cK^{s,\gamma}(\RR)$ is the same for any conormal order $\gamma\in\R$
(in the sense that it agrees on $\cK^{s,\gamma}(\RR)\cap
\cK^{s+\mu,\gamma'}(\RR)$ independently of whether this intersection
is seen as a subspace of $\cK^{s,\gamma}(\RR)$ or 
$\cK^{s,\gamma'}(\RR)$). Hence, the argument provided for
well-posedness in function spaces with conormal order $\delta$ works
for any other conormal order just the same.

%% ---------------------------------------------------------------------------------------

\subsection{Comparison with other results and open problems}

One of the big open problems in the field is to provide satisfactory
answers concerning well-posedness for hyperbolic boundary problems
with a uniformly characteristic boundary. There only exist several
partial results in the literature, see
e.g. \cite{BGS2007,MO1975,CST2006,Rau1985,Sec1996}.

Totally characteristic hyperbolic boundary problems (for higher-order
scalar equations) were treat\-ed by
\textsc{Sakamoto}~\cite{Sak1989}. She obtained results comparable to
ours by showing (in our own notation) well-posedness in the scales
$H_{P,\theta}^{s,\delta}(\overline\Omega)$, where $P= T^\delta P_0$,
with $P_0$ being the type for Taylor asymptotics, and
$\theta=s\geq0$. Our results are slightly more general in that respect
that we now allow general asymptotic types $P$, general weight
intervals given by $\theta\geq0$, and also negative Sobolev
orders~$s$. In addition, we show that the boundary traces are given as
solutions to hyperbolic Cauchy problems in the lateral boundary. This
later result appears to be new.

\textsc{Sakamoto}~\cite{Sak1989} used pseudodifferential techniques to
establish her results, albeit in a different manner. Our approach
might have the advantage that it yields a symmetrizer also in the
uniformly characteristic case, upon further developing an adapted
pseudodifferential calculus. It seems to be evident that this calculus
has to be some sort of an edge calculus, as in
\textsc{Schulze}~\cite{Sch1998}. On the level handled in this paper
the difference between an edge calculus and a cone calculus is rather
marginal: \ In the latter one first performs the Fourier transform
$\mathcal F_{y\to\eta}$ with respect to the $y$-variables and then the
Mellin transform $M_{x\to z}$ with respect to the $x$-variable, while
in the former these transforms are performed in the opposite order. As
both operations commute, $M_{x\to z} \mathcal F_{y\to\eta} = \mathcal
F_{y\to\eta} M_{x\to z}$, it is possible to recast the cone calculus
utilized here in the form of an edge calculus, up to some technical
details.
Note that the form of the asymptotic terms as given in \eqref{asp4} is
already typical of edge problems.

%In \cite{CST2006,Rau1985,Sec1996}, well-posedness results in the
%uniformly characteristic case for first-order symmetric-hyperbolic
%systems were established under the assumption that the boundary
%condition is maximally non-negative. In fact, in
%\cite{CST2006,Sec1996} well-posedness was shown to hold (in our
%notation) in the spaces $H_{P_0,s/2}^{s,0}(\overline\Omega)$ if
%$s\in2\N_0$. Well-posedness for all $s\geq0$ in these spaces then
%follows by interpolation. (The spaces
%$H_{P_0,(s-1)/2}^{s,0}(\overline\Omega)$ used in
%\cite{CST2006,Sec1996} if $s\in1+2\N_0$ are slightly larger.) Still,
%when specified to the totally characteristic case

%% ---------------------------------------------------------------------------------------

\subsection{Notation}\label{notat}

We shall use freely standard notation from microlocal analysis (see
\cite{Hoe2007}). For singular analysis, we closely follow the notation
used in \cite{Sch1998}.

Throughout the paper, we shall especially employ the following
notation:

\begin{itemize}
\item $\delta\in\R$ denotes the \textit{reference conormal order\/},
  which is fixed once and for all.
  
\item For the Mellin covariable $z\in \C$, we write $z=\beta+\ii \tau$
  with $\beta,\,\tau \in \R$.

\item For $\beta\in\R$, we introduce the weight line
  $\Gamma_\beta=\{z\in\C\mid\Re z=\beta\}$. The weight line that
  corresponds to the reference conormal order $\delta$ is
  $\Gamma_{1/2-\delta}$.

\item $\varphi\in\cC^\infty(\overline\R_+;\R)$ is a cutoff function,
  i.e., $0\leq \varphi\leq 1$, $\varphi(x)= 1$ for $|x|\lesssim1$, and
  $\varphi(x)=0$ for $|x|\gtrsim1$. Likewise, $\varphi_0,\varphi_1$
  are also cutoff functions satisfying, in addition,
  $\varphi\varphi_0=\varphi$ and $\varphi\varphi_1=\varphi_1$ (i.e.,
  $(1-\varphi)(1-\varphi_1)=1-\varphi$).

\item $\psi\in\cC^\infty(\overline{\R}_+;\R)$ denotes a non-decreasing
  function such that $\psi(x) = x$ for $0\leq x\leq 1/2$ and
  $\psi(x)=1$ for $x\geq1$. Furthermore, $\psi^\varrho$ for
  $\varrho\in\R$ is the $\varrho$th power of $\psi$.

\item $\langle \eta\rangle= (4+ |\eta|^2)^{1/2}$ for
  $\eta\in\R^d$. Hence, $\langle \eta\rangle\geq2$ and $\log \langle
  \eta\rangle>0$.

\item The Mellin transform of $u$ with respect to $x\in\Rp$ is
  $\widetilde u(z) = Mu(z) = \int_0^\infty x^{z-1} u(x)\,\dd x$ for
  $z\in\C$. The inverse Mellin transform is $M^{-1}v(x) =
  \frac1{2\pi\ii} \int_{\Gamma_\beta} x^{-z}v(z)\,\dd z$ for a
  suitable $\beta\in\R$ depending on the situation under
  consideration.
  
\item The Fourier transform of $w$ with respect to $y\in\R^d$ is $\hat
  w(\eta) = \mathcal F w(\eta) = \int_{\R^d}\ee^{-\ii
    y\cdot\eta}w(y)\,\dd y$ for $\eta\in\R^d$. The inverse Fourier
  transform is $\mathcal F^{-1}\omega(y) = \int_{\R^d} \ee^{\ii y\cdot
    \eta} \omega(\eta)\,\dbar \eta$, where $\dbar \eta
  =(2\pi)^{-d}\,\dd\eta$.

\item The space $H^{s,\langle k\rangle}(\R^d)$ for $(s,k)\in\R\times\Z$,
  consists of all $w$ such that $\langle\eta\rangle^s \log^k
  \langle\eta\rangle \hat w(\eta)\in L^2(\R^d)$. In particular,
  $H^s(\R^d)= H^{s,\langle 0\rangle}(\R^d)$.

\item In the closed half-space $\oRR$, we use coordinates $(x,y)$
  with $x\geq0$ and $y\in\R^d$.

\item The weighted Sobolev spaces $\cK^{s,\gamma}(\RR)$ are introduced
  in Definition~\ref{Def3.6} and the Sobolev spaces
  $H_{P,\theta}^{s,\delta}(\oRR)$ with asymptotics in
  Definition~\ref{Def3.10}.

\item $\gamma_{pk}$ for $(p,k)\in P$, where $P$ is an asymptotic type,
  is a trace operator. Similarly, $\Gamma_{pk}$ for $(p,k)\in P$ is a
  potential operator.

\item $\mathcal M^\mu(\R^d)$ denotes the space of holomorphic Mellin
  symbols $h(z) = h(y,z,D_y)$.
  
\item The Mellin quantization of an amplitude function $h\in
  \cC^\infty(\overline\R_+;\mathcal M^\mu(\R^d))$ is
  \[
    (\operatorname{op}_M(h)u)(x,y) =
  \frac1{2\pi\ii}\int_{\Gamma_{1/2-\delta}} \, x^{-z}\,
  h(x,y,z,D_y)\tilde u(z,y)\,\dd z.
  \]
  $\operatorname{op}_M(h)$ acts on suitable distributions $u=u(x,y)$.
  
\item The Fourier quantization of an amplitude function $a\in
  S_\cl^\mu(\R^d\times\R^d)$ is $a(y,D_y)=\operatorname{op}_\psi(a)$
  with
  \[
  (\operatorname{op}_\psi(a)v)(y) = \int_{\R^d} e^{i y
    \cdot \eta} \, a(y,\eta) \, \hat{v}(\eta) \,\dbar\eta,
  \]
  where $\dbar\eta =
  (2\pi)^{-d}\,\dd \eta$. The principal symbol of $A=A(y,D_y)$ is
  denoted by $\sigma_\psi^\mu(A)$.

\item $\Psi_\cc^\mu(\oRR)$ is the class of cone-degenerate
  pseudodifferential operators, of order $\mu\in\R$, utilized
  here. Elements of this space are symbolically written as $\mathcal
  A(x,y,xD_x,D_y)$.

\item $\widetilde T^*\oRR$ denotes the compressed cotangent bundle over $\oRR$.

\end{itemize}

%% ---------------------------------------------------------------------------------------

\section{Well-posedness in standard Sobolev spaces}\label{sec2}

We start with proving Theorem~\ref{thm1}. Notice that it is enough to
treat the case $\sigma=0$. Cases with $\sigma\geq1$ then follow by
differentiating the equation $\sigma$ times with respect to $t$, as in
the proof of Proposition~\ref{abcd} below.

\begin{prop}
Let $u_0\in H^s(\Omega;\C^N)$, $f\in L^1((0,T);H^s(\Omega;\C^N))$ for
some $s\geq0$. Then \textup{Eq.~\eqref{ibvp}} possesses a unique
solution $u\in \cC([0,T]; H^s(\Omega;\C^N))$. In addition, for
$\ell<s-1/2$, one has that $\gamma_\ell u\in
\cC([0,T];H^{s-\ell-1/2}(\partial\Omega;\C^N))$ is uniquely determined
as the solution to the hyperbolic Cauchy problem
\begin{equation} \label{trace1}
\left\{ \enspace  
\begin{aligned}
  & \p_t (\gamma_\ell u) + \sum_{j=1}^d
  A_j(t,0,y)\partial_j(\gamma_\ell u) + \left(\ell\, A(t,0,y) +
  B(t,0,y) \right)\gamma_\ell u \\
  & \hspace*{090mm} = (\gamma_\ell f)(t,y)+ R_\ell[u](t,y),
  \\ & (\gamma_\ell u)\bigr|_{t=0} = (\gamma_\ell u_0)(y).
\end{aligned}
\right.
\end{equation}
Here, the term $R_\ell[u]$ is zero for $\ell=0$ and linear in
$\gamma_0u,\dotsc,\gamma_{\ell-1}u$ for $\ell\geq1$.
\end{prop}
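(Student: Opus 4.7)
The plan is to reduce the existence and uniqueness assertion to classical Cauchy theory for first-order symmetrizable hyperbolic systems on $\R^n$, exploiting the fact that the boundary is totally characteristic so that no boundary data need be prescribed. I would then derive \eqref{trace1} by differentiating the interior equation \eqref{ibvp2} $\ell$ times in the normal direction, evaluating at $x=0$, and observing that the resulting equation for $\gamma_\ell u$ is again symmetrizable hyperbolic on $(0,T)\times\partial\Omega$. The triangular-in-$\ell$ structure then closes the induction.

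For existence, I work in local coordinates $(x,y)$ with $\Omega=\{x>0\}$ near the boundary and extend the coefficient matrices $xA$, $A_j$, and $B$ smoothly across $x=0$ to $[0,T]\times\R^n$ so that the extended first-order operator remains symmetrizable hyperbolic uniformly in $(t,x,y)$; an extension of the symmetrizer $S$ by smooth reflection across $x=0$, multiplied by a cutoff and then corrected by a small positive multiple of the identity, achieves this. Extending $u_0\in H^s$ and $f\in L^1H^s$ by a standard extension operator and applying classical theory yields a unique $\tilde u\in\cC([0,T];H^s(\R^n;\C^N))$, and $u:=\tilde u|_\Omega$ solves \eqref{ibvp2}. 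Uniqueness in $\Omega$ and independence of the chosen extension rest on two facts: assumption \textbf{(A2)} forces the coefficient of $\partial_x$ to vanish at $x=0$, so all bicharacteristics issuing from $\overline\Omega$ remain in $\overline\Omega$, whence finite propagation speed shields $u$ from the extension; and the standard $L^2$-energy estimate applied on $\Omega$ produces no boundary term, because integration by parts of $xA\partial_x$ yields a boundary integral with a vanishing factor~$x$.

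To derive \eqref{trace1}, I choose local coordinates with $\partial_\nu=\partial_x$, apply $\partial_x^\ell/\ell!$ to \eqref{ibvp2}, and restrict to $x=0$. The decisive structural identity for the totally characteristic term is
\begin{equation*}
  \partial_x^\ell(xA\,\partial_x u)\bigr|_{x=0} = \ell\,\partial_x^{\ell-1}(A\,\partial_x u)\bigr|_{x=0},
\end{equation*}
since in the Leibniz expansion only the summand in which one derivative hits the factor $x$ survives at $x=0$. Expanding once more by Leibniz and extracting the top-order piece $\ell\,A(t,0,y)\,\partial_x^\ell u|_{x=0}$ produces, after division by $\ell!$, the contribution $\ell\,A(t,0,y)\,\gamma_\ell u$ plus a linear combination of $\gamma_m u$ with $m<\ell$. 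Similarly, $\partial_x^\ell(A_j\partial_j u)|_{x=0}/\ell!$ yields $A_j(t,0,y)\partial_j(\gamma_\ell u)$ plus tangential derivatives of $\gamma_m u$ with $m<\ell$, and $\partial_x^\ell(Bu)|_{x=0}/\ell!$ yields $B(t,0,y)\gamma_\ell u$ plus $\gamma_m u$ with $m<\ell$. Collecting gives \eqref{trace1} with $R_\ell[u]$ linear in $\gamma_0u,\dots,\gamma_{\ell-1}u$ and $R_0[u]=0$.

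Well-posedness of \eqref{trace1} follows by induction on $\ell$. The restriction $S(t,0,y)$ of the bulk symmetrizer remains positive definite and symmetrizes $A_j(t,0,y)$, so the operator $\partial_t+\sum_j A_j(t,0,y)\partial_j+\ell A(t,0,y)+B(t,0,y)$ is symmetrizable hyperbolic on $(0,T)\times\partial\Omega$; the zeroth-order perturbation does not disturb the energy estimate. By the trace theorem, $\gamma_\ell f\in L^1((0,T);H^{s-\ell-1/2}(\partial\Omega;\C^N))$ and $\gamma_\ell u_0\in H^{s-\ell-1/2}(\partial\Omega;\C^N)$, while the inductive hypothesis places $R_\ell[u]$ in the same class (the loss of one tangential derivative on $\gamma_m u$ for $m\leq\ell-1$ still gives Sobolev index $\geq s-\ell-1/2$). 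Classical theory on $\R^d$ then yields a unique $\gamma_\ell u\in\cC([0,T];H^{s-\ell-1/2}(\partial\Omega;\C^N))$. The main obstacle I anticipate is the combinatorial bookkeeping in the Leibniz expansion needed to confirm that every term in $R_\ell[u]$ genuinely involves only $\gamma_m u$ with $m<\ell$ and sits in the correct regularity class; this is organizational once the displayed structural identity is recognised.
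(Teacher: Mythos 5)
Your proposal follows exactly the paper's strategy: extend the coefficients (preserving the $xA\partial_x$ structure) and the data to $\R^n$, invoke classical symmetrizable-hyperbolic Cauchy theory on free space, use the totally characteristic nature of $\{x=0\}$ (tangency of characteristics, hence finite propagation speed, hence no contribution from the extension region) to obtain existence, uniqueness, and independence of the chosen extensions on $\Omega$, and then differentiate the equation $\ell$ times in $x$ and set $x=0$ to derive \eqref{trace1}. You merely spell out the steps the paper leaves to the reader (the explicit Leibniz identity $\partial_x^\ell(xA\partial_x u)|_{x=0}=\ell\,\partial_x^{\ell-1}(A\partial_x u)|_{x=0}$, the extension of the symmetrizer, the inductive well-posedness and regularity bookkeeping for the boundary Cauchy problems), all of which are correct.
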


The precise form of the term $R_\ell[u]$ will be given in \eqref{taking_traces}
below.

\begin{proof}
Extend the matrix-valued coefficients $A, A_j, B$ in Eq.~\eqref{ibvp}
to matrix-valued functions \linebreak $A, A_j, B\in \cC^\infty([0,T]\times
\R^{n};M_{N\times N}(\C))$ so as to obtain a uniformly
symmetrizable hyperbolic system $\partial_t+xA\partial_x +
\sum_{j=1}^d A_j\partial_j +B$ on $(0,T)\times \R^{n}$ (keeping the
notation from above). Then consider the hyperbolic Cauchy problem
\begin{equation}\label{ext}
\left\{  
\begin{aligned}
  \partial_t U + x A(t,x,y)\partial_x U+ \sum_{j=1}^d
  A_j(t,x,y)\partial_jU + B(t,x,y)U & = F(t,x,y), \quad (t,x,y)\in
  (0,T)\times \R^{n}, \\
  U\bigr|_{t=0} & = U_0(x,y),
\end{aligned}
\right.
\end{equation}
where $U_0\in H^s(\R^{n};\C^N)$, $U_0\bigr|_{\Omega}=u_0$, $F\in
L^1((0,T);H^s(\R^{n};\C^N))$, and
$F\bigr|_{(0,T)\times\Omega}=f$. Because this system is on whole
space, Eq.~\eqref{ext} possesses a unique solution $U\in \cC([0,T];
H^s(\R^{n};\C^N))$. As characteristics of this system are tangent to
the hypersurface $(0,T)\times\partial\Omega$, it follows that whatever
starts out in the region in which $(0,T)\times(\R^n\setminus\Omega)$
stays in that region for all times.  Therefore,
$u=U\bigr|_{(0,T)\times\Omega}$ only depends on $u_0$, $f$, in
particular, $u$ is independent of all the extensions chosen. We
conclude that $u$ is the unique solution to the original
problem~\eqref{ibvp2}.

Differentiating \eqref{ext} a number of times with respect to $x$ and
setting $x=0$ yields \eqref{trace1}.
\end{proof}

%% ------------------------------------------------------------------------------------

\section{Cone-degenerate pseudodifferential operators}\label{CDPSDO}

The main technical tool to prove the result in Theorem~\ref{thm2} is a
calculus for a certain class of cone-degenerate pseudodifferential
operators on $\RR$. Here we briefly introduce this pseudodifferential
calculus. Calculi for cone-degenerate pseudodifferential operators
have been developed by \textsc{B.-W.~Schulze} \cite{Sch1991,Sch1998},
see also \cite{HS2008}. We closely follow his approach and refer to
the said references for details. An equivalent calculus is the
$b$-calculus of \textsc{R.~Melrose and G.~Mendoza}
\cite{Mel1993,MM1983}. For our purposes, Schulze's cone calculus is
preferable as it is more analytic in flavor and, therefore, easier to
adapt to our needs.

Compared to \cite{Sch1991,Sch1998}, there are a few differences. First
of all, the base of the cone is $\R^d$ which is a non-compact
manifold. This non-compactness introduces no additional difficulties,
as we are not interested in the Fredholm property of elliptic
operators, but solely in the construction of a symmetrizer. Secondly,
the coefficients $u_{pk}$ in the asymptotic expansions \eqref{asp1} do
not belong to finite-dimensional subspaces of $\cC^\infty(\R^d;\C^N)$,
but instead can be any functions from the space $H^{s+\Re
  p+\delta-1/2,\langle k\rangle}(\R^d;\C^N)$, see
Definition~\ref{Def3.10} and Proposition~\ref{trace_thm} for
details. This then requires a special treatment of the asymptotic
terms, see Definition~\ref{Def3.9}. In fact, the function spaces
$H_{p,\theta}^{s,\delta}(\oRR;\C^N)$ employed below are modeled after
the edge Sobolev spaces of Schulze (see \cite{LRW2022} for a
discussion of this point). Lastly, the cone-degenerate
pseudodifferential operators we utilize do not produce any further
asymptotic information, but instead preserve the given one. This is in
the sense that the given asymptotic type, $P$, which collects the
$(p,k)$ appearing in \eqref{asp1}, is preserved, while certainly the
coefficients $u_{pk}$ are, in general, altered when applying an
operator~$A$ belonging to the calculus to $u$ (see
Proposition~\ref{ae3} for the way in which this happens). Accordingly,
the conormal symbols $\sigma_\cc^{-j}(A)$ of operators $A$ in the
calculus (see Definition~\ref{Def3.16}) are holomorphic functions of
the Mellin covariable~$z\in\C$, while for general cone
pseudodifferential calculi these conormal symbols are finitely
meromorphic functions of~$z\in\C$. Again, our choice is justified by
the fact that we do not have to construct parametrices for elliptic
cone-degenerate pseudodifferential operators (with the exception of
Proposition~\ref{Lem4.3} where we establish the existence of order
reductions).

%% ---------------------------------------------------------------------------------------

\subsection{Asymptotic types}

The functional-analytic approach of handling the asymptotic expansions
\eqref{asp2} starts with collecting the data $(p,k)$ appearing in
\eqref{asp2} into so-called asymptotic types. Recall that we fix a
$\delta\in\R$ as a reference conormal order.

\begin{defn}  \label{Def3.1}
The set $\underline{\operatorname{As}}^\delta$ of \textit{asymptotic
  types\/} associated with the conormal order $\delta\in\R$ consists
of discrete subsets $P\subset \C\times\N_0$ with the following
properties:
\begin{enumerate} [(i)]
\item $\Re p<1/2-\delta$ for $(p,k)\in P$,
\item $\Re p\to -\infty$ as $(p,k)\in P$, $|p|\to\infty$,
\item $(p,k-1)\in P$ if $(p,k)\in P$ and $k>0$,
\item $(p-1,k)\in P$ if $(p,k)\in P$.
\end{enumerate}
\end{defn}

\begin{rem}
Property (iv) is needed to guarantee the coordinate invariance of the
constructions.
\end{rem}

We set $\pi_\C P =\bigl\{p\in\C \mid \text{$(p,k)\in P$ for some $k\in
  \N_0$}\bigr\}$ and $m_p=\max\{k+1\mid(p,k)\in P\}$ with the
convention that $m_p=0$ if $p\notin \pi_\C P$. An asymptotic type $P$
is then completely determined by the $m_p$. In this sense, $P$ can be
thought of as a non-negative divisor (in the sense of complex
analysis) having additional properties. Still, regarding $P$ as a
discrete subset of $\C\times\N_0$ comes in handy in the notation
employed below.
 
\begin{ex}
\begin{enumerate}[(i)]
\item The asymptotic type governing \eqref{taylor} is
  $P_0=\{(-\ell,0)\mid \ell\in\N_0\}$. We refer to it as
  \textit{Taylor asymptotics\/}.
\item As a subset of $\C\times\N_0$, the \textit{empty asymptotic
  type\/} $\mathcal O$ is given by $\mathcal O=\emptyset$.
\end{enumerate}
\end{ex}

Let $P\in \underline{\operatorname{As}}^\delta$, $\varrho\in\R$. Then
we define $T^\varrho P\in
\underline{\operatorname{As}}^{\delta+\varrho}$ to consists of all
$(p,k)\in \C\times\N_0$ such that $(p+\varrho,k)\in P$.

%% ---------------------------------------------------------------------------------------

\subsection{Function spaces}
 
Next we introduce suitable weighted Sobolev spaces and Sobolev spaces
with asymptotics. These are the function spaces in which we will
establish the energy inequalities.

%% ---------------------------------------------------------------------------------------

\subsubsection{Weighted Sobolev spaces}

\begin{defn} \label{Def3.3}
Let $\gamma\in\R$. For $s\in\N_0$, the weighted Sobolev space
$\cH^{s,\gamma}(\R_+^{1+d})$ is defined to consist of all functions
$u=u(x,y)$ such that
\[
   x^{-\gamma} (x \partial_x)^j \partial_y^\alpha u \in
   L^2(\R_+^{1+d}), \quad j+|\alpha|\leq s.
\]
For general $s\in\R$, the spaces $\cH^{s,\gamma}(\R_+^{1+d})$ are then
introduced by complex interpolation and duality.
\end{defn} 

We can characterize the space $\cH^{s,\gamma}(\R_+^{1+d})$ via the
Mellin transform (see Section~\ref{a1}).

\begin{lem}  \label{Lem3.4}
Let $s\geq0$, $\gamma\in\R$. Then $u\in \cH^{s,\gamma}(\R_+^{1+d})$ if
and only if
\[
   \frac1{2\pi\ii}\int_{\Gamma_{1/2-\gamma}} \left(\|\tilde
   u(z,\cdot)\|_{H^s(\R^d)}^2 + \langle z\rangle^{2s}\|\tilde
   u(z,\cdot)\|_{L^2(\R^d)}^2\right)\,\dd z <\infty,
\]
where $\tilde u(z, \cdot)$ is the Mellin transform of $u(x,
\cdot)$ with respect to $x$.
%
%For $s<0$, a similar characterization holds, with
%$\max\left\{\|\tilde u(z,\cdot)\|_{H^s(\R^d)}^2,\langle
%z\rangle^{2s}\|\tilde u(z,\cdot)\|_{L^2(\R^d)}^2\right\}$ under the
%integral.
\end{lem}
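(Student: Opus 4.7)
The plan is to first reduce to integer $s\in\N_0$ by complex interpolation, and then use Plancherel's theorem for the Mellin transform in $x$ combined with the Fourier Plancherel formula in $y$.

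For $s\in\N_0$, I would start from the basic Plancherel identity for the Mellin transform. Substituting $x=\ee^{-t}$ turns the Mellin transform on the line $\Gamma_{1/2-\gamma}$ into the Fourier transform of $\ee^{-(1/2-\gamma)t}u(\ee^{-t},\cdot)$, giving the isometry
\[
  \|x^{-\gamma}u\|_{L^2(\R_+^{1+d})}^2
  = \f1{2\pi\ii}\int_{\Gamma_{1/2-\gamma}} \|\widetilde u(z,\cdot)\|_{L^2(\R^d)}^2\,\dd z.
\]
Applied to $(x\p_x)^j\p_y^\alpha u$ and using the commutation rule $M((x\p_x)^j\p_y^\alpha u)(z) = (-z)^j\p_y^\alpha \widetilde u(z)$, this generalizes to
\[
  \|x^{-\gamma}(x\p_x)^j\p_y^\alpha u\|_{L^2}^2
  = \f1{2\pi\ii}\int_{\Gamma_{1/2-\gamma}} |z|^{2j}\,\|\p_y^\alpha \widetilde u(z,\cdot)\|_{L^2(\R^d)}^2\,\dd z.
\]
Summing over all $j+|\alpha|\leq s$, the problem reduces to the pointwise equivalence on the Fourier side in $y$,
\[
  \sum_{j+|\alpha|\leq s}|z|^{2j}\ler^{2|\alpha|}
  \sim \ler^{2s} + \langle z\rangle^{2s},
  \qquad (z,\eta)\in\Gamma_{1/2-\gamma}\times\R^d,
\]
which follows from dominating each mixed term by one of the two extremes $\ler^{2s}$ and $|z|^{2s}$, and then using that $\langle z\rangle\geq2$ is bounded below on the contour, so $|z|^{2s}+1\sim\langle z\rangle^{2s}$.

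For non-integer $s\geq0$, the observation is that the right-hand side of the lemma is precisely the norm of a weighted $L^2$-space with weight $(\langle z\rangle^{2s}+\ler^{2s})^{1/2}$ on $\Gamma_{1/2-\gamma}\times\R^d$ (after Fourier transform in $y$). Weighted $L^2$-spaces form a complex interpolation scale with the expected interpolant weights, so the Mellin-side space interpolates correctly between its integer-$s$ neighbors. Since $\cH^{s,\gamma}(\R_+^{1+d})$ is \emph{defined} via complex interpolation from $\cH^{s_0,\gamma},\cH^{s_1,\gamma}$ with $s_0,s_1\in\N_0$, the characterization extends from the integer case handled in the first step to all $s\geq0$.

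The main obstacle, in the sense of being the only non-mechanical step, is justifying that the Mellin-side norm interpolates between its integer values in the same way that the definition of $\cH^{s,\gamma}$ does. This is a standard fact about complex interpolation of weighted $L^2$-spaces, but does require explicit identification of the RHS as such a weighted $L^2$-norm and invocation of the Stein--Weiss / Calderón interpolation theorem. Everything else — the Mellin Plancherel identity, the commutation rule for $x\p_x$, and the pointwise symbol equivalence — is routine.
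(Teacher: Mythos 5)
The paper states Lemma~\ref{Lem3.4} without proof, so there is no proof of record to compare against; your argument is correct and is the natural one, and almost certainly what the authors had in mind. The two non-routine points are handled appropriately: the integer case by Mellin Plancherel, $M((x\p_x)^j u)(z)=(-z)^j\tilde u(z)$, and the pointwise estimate $\sum_{j+m\le s}|z|^{2j}|\eta|^{2m}\sim\langle z\rangle^{2s}+\ler^{2s}$ on $\Gamma_{1/2-\gamma}\times\R^d$ (using $\langle z\rangle,\ler\ge2$ to absorb the constant terms); the non-integer case by Stein--Weiss interpolation of weighted $L^2$ spaces. The one step left implicit is the weight equivalence needed for the interpolation to land in the right place, namely that the Calder\'on interpolant weight $\bigl(\langle z\rangle^{2s_0}+\ler^{2s_0}\bigr)^{1-\theta}\bigl(\langle z\rangle^{2s_1}+\ler^{2s_1}\bigr)^{\theta}$ is comparable to $\langle z\rangle^{2s}+\ler^{2s}$ for $s=(1-\theta)s_0+\theta s_1$; this is immediate by splitting into the regimes $\langle z\rangle\gtrless\ler$, but is worth a line in a written-up proof.
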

\medskip

As said in the introduction, we are mostly interested in the behavior
of solutions to Eq.~\eqref{ibvp2} near $x=0$. Hence, we make a generic
choice for their possible behavior near $x=\infty$ (also compare with
Eq.~\eqref{ppt3}). Note that $u \in \cH^{s,\gamma}(\R_+^{1+d})$
implies that $\varphi u \in \cH^{s,\gamma}(\R_+^{1+d})$.

\begin{defn} \label{Def3.6}
For $s,\gamma\in\R$, we set 
\[
  \mathcal K^{s,\gamma}(\R_+^{1+d}) = \bigl\{u\bigm| \varphi u\in
  \cH^{s,\gamma}(\R_+^{1+d}), \;
  \left(1-\varphi\right)u\in H^s(\R_+^{1+d})\bigr\}.
\]
\end{defn}

In view of $\cH^{s,\gamma}(\R_+^{1+d}) \subset
H^s_{\text{loc}}(\R_+^{1+d})$, the space $\mathcal
K^{s,\gamma}(\R_+^{1+d})$ is independent of the choice of the cut-off
function $\varphi$. Moreover, $\mathcal K^{s,\gamma}(\R_+^{1+d})$ is a
Hilbert space in a natural way.

\smallskip

Now fix $\delta\in\R$. In the sequel,
\[
  \text{$\mathcal K^{0,\delta}(\R_+^{1+d})$ will serve as reference
    Hilbert space.}
\]
Write $\langle \;,  \,\rangle$ for the inner product
and  $\|\;\|$ for the norm in  $\mathcal K^{0,\delta}(\R_+^{1+d})$.

%% ---------------------------------------------------------------------------------------

\subsubsection{The asymptotic terms}

The next two lemmas prepare for introducing the terms occurring in the
asymptotic expansions \eqref{asp2}. Recall that the asymptotic
expansions \eqref{asp2} are formal in the sense that, in general, we
do not have enough regularity for the coefficients~$v_{pk}$ to write
the asymptotic terms as tensor products.
\begin{lem}\label{Lem3.5}
Let $(p,k)\in \C\times\N_0$ and $w\in H^s(\R^d)$ for some
$s\in\R$. Then the function $v$ defined by
\[
  v(x,y) = \mathcal F^{-1}_{\eta\to y}
  \bigl\{\varphi(x\ler)(x\ler)^{-p}
  \log^k(x\ler)\hat w(\eta)\bigr\}
\]
belongs to $\bigcap_{\epsilon>0} \cK^{s+\Re p+\epsilon,1/2-\Re
  p-\epsilon}(\RR)$. Moreover,
\begin{equation}\label{nunu}
  v(x,y) = \varphi(x) \bigl[\left( x\langle D_y\rangle\right)^{-p}
  \log^k\!\left(x\langle D_y\rangle\right) w\bigr](y) + v'(x,y),
\end{equation}
where $v'\in \bigcap_{\epsilon>0} \cK^{s+\Re p-\epsilon,1/2-\Re
  p+\epsilon}(\RR)$.
\end{lem}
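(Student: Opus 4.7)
\medskip

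\textbf{Proof plan.} The plan is to reduce everything to estimates on weight lines via the Mellin characterisation of Lemma~\ref{Lem3.4} and Plancherel in $y$. First I would perform the substitution $\xi=x\ler$ in the Mellin integral in $x$ to obtain the clean factorisation
\[
  M_x\bigl[\varphi(x\ler)(x\ler)^{-p}\log^k(x\ler)\bigr](z) \;=\; \ler^{-z}\,g(z),
  \qquad g(z):=\int_0^\infty \xi^{z-p-1}\log^k\!\xi\;\varphi(\xi)\,\dd\xi,
\]
so that $\tilde v(z,\eta)=\ler^{-z}g(z)\hat w(\eta)$. I would then establish the analytic properties of $g$: a priori defined for $\Re z>\Re p$, it extends to a meromorphic function on $\C$ with a single pole of order $k+1$ at $z=p$ (split $\int_0^\infty=\int_0^1+\int_1^\infty$ and use $\int_0^1\xi^{z-p-1}\log^k\!\xi\,\dd\xi=(-1)^kk!/(z-p)^{k+1}$); repeated integration by parts (writing $\xi^{z-p-1}=(z-p)^{-1}\partial_\xi\xi^{z-p}$ modulo lower log-powers) yields that $g$ is rapidly decreasing in $\Im z$ on every vertical line $\Gamma_\beta$ with $\beta\neq \Re p$.

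For the inclusion in $\bigcap_{\ve>0}\cK^{s+\Re p+\ve,1/2-\Re p-\ve}$, I would evaluate the norm on the weight line $\Gamma_{\Re p+\ve}$. With $\Re z=\Re p+\ve$ the $\eta$-weight exponent collapses: $\|\tilde v(z,\cdot)\|_{H^{s+\Re p+\ve}}^2=|g(z)|^2\|w\|_{H^s}^2$, and the $\langle z\rangle^{2(s+\Re p+\ve)}\|\tilde v(z,\cdot)\|_{L^2}^2$ term is controlled by $\langle z\rangle^{2(s+\Re p+\ve)}|g(z)|^2\|w\|_{H^{-\Re p-\ve}}^2$, which is fine provided $s+\Re p+\ve\geq 0$ (for $s+\Re p+\ve<0$ I would appeal to the duality/interpolation definition of $\cK^{s,\gamma}$). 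The $z$-integral over $\Gamma_{\Re p+\ve}$ converges by the rapid decay of $g$.

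For the decomposition \eqref{nunu}, the computation
\[
  \varphi(x)\bigl[(x\langle D_y\rangle)^{-p}\log^k(x\langle D_y\rangle)w\bigr](y)
  \;=\; \mathcal F^{-1}_{\eta\to y}\bigl\{\varphi(x)(x\ler)^{-p}\log^k(x\ler)\hat w(\eta)\bigr\}
\]
gives $v'=-\mathcal F^{-1}_{\eta\to y}\{\chi(x,\eta)(x\ler)^{-p}\log^k(x\ler)\hat w(\eta)\}$ with $\chi(x,\eta)=\varphi(x)-\varphi(x\ler)$. Setting $g_0(z):=M[\varphi(\xi)\xi^{-p}](z)$ (meromorphic with a simple pole at $z=p$), one has $g_j(z):=M[\varphi(\xi)\xi^{-p}\log^j\!\xi](z)=\partial_z^j g_0(z)$, and using the identity $\sum_{j=0}^k\binom{k}{j}t^{k-j}\partial_z^j g_0(z)=e^{-(z-p)t}\partial_z^k\bigl[e^{(z-p)t}g_0(z)\bigr]$ with $t=\log\ler$, the $x$-Mellin transform of the bracket rearranges as
\[
  \tilde\chi_{p,k}(z,\eta)
  \;=\; \bigl(\ler^{-p}-\ler^{-z}\bigr)\partial_z^k g_0(z)
  \;+\; \ler^{-p}\sum_{j=0}^{k-1}\binom{k}{j}(\log\ler)^{k-j}\partial_z^j g_0(z).
\]
This is the key cancellation: on the weight line $\Gamma_{\Re p-\ve}$ both terms are dominated by $\ler^{-\Re p+\ve'}\log^k\!\ler$ (with $\ve'<\ve$ arbitrary) times a rapidly decreasing factor in $\Im z$.

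Finally, I would insert this bound into Lemma~\ref{Lem3.4} on the weight line $\Gamma_{\Re p-\ve}$ corresponding to conormal order $1/2-\Re p+\ve$: the $H^{s+\Re p-\ve}_y$ norm of $\tilde v'(z,\cdot)$ is controlled by $\int\ler^{2s-2(\ve-\ve')}\log^{2k}\!\ler\,|\hat w(\eta)|^2\,\dd\eta\lesssim\|w\|_{H^s}^2$ for $\ve'<\ve$, and the $z$-integration converges by the rapid decay of the $\partial_z^j g_0$ on the line. The main technical obstacle is precisely isolating the cancellation between $\ler^{-p}$ and $\ler^{-z}$ in $\tilde\chi_{p,k}$; once that identity is in hand, both claims follow by routine Mellin--Plancherel bookkeeping, with negative Sobolev orders handled by duality from the positive case.
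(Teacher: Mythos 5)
Your argument follows the paper's own strategy: pass to Mellin–Plancherel estimates on weight lines via Lemma~\ref{Lem3.4}, using that the Mellin transform $g(z)$ of $\varphi(\xi)\xi^{-p}\log^k\!\xi$ is meromorphic with a single pole of order $k+1$ at $z=p$ and decays rapidly on every vertical line away from $p$. The first inclusion is established in exactly the same way on $\Gamma_{\Re p+\epsilon}$. For the decomposition \eqref{nunu} the paper shifts the contour from $\Gamma_{\Re p+\epsilon}$ to $\Gamma_{\Re p-\epsilon'}$ and identifies $v-v''$ with the residue of $x^{-z}\tilde v(z,\cdot)$ at $z=p$, which produces the term $\left(x\langle D_y\rangle\right)^{-p}\log^k\!\left(x\langle D_y\rangle\right)w$ in one step; you instead subtract $\varphi(x)\cdot(\text{pure term})$, expand $\log^k(x\ler)=(\log x+\log\ler)^k$, and compute the Mellin transform of the remainder $v'$ directly. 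These are equivalent up to bookkeeping; the contour-shift version spares you the binomial rearrangement, while yours makes $\tilde v'(z,\cdot)$ completely explicit.

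One point needs correction. You claim that on $\Gamma_{\Re p-\epsilon}$ both pieces of $\tilde\chi_{p,k}$ are dominated by $\ler^{-\Re p+\epsilon'}\log^k\!\ler$ with $\epsilon'<\epsilon$ arbitrary, crediting this to the cancellation between $\ler^{-p}$ and $\ler^{-z}$. That bound is not available: on $\Gamma_{\Re p-\epsilon}$ one has $|\ler^{-z}|=\ler^{-\Re p+\epsilon}$, and away from $\tau=\Im p$ the difference $\ler^{-p}-\ler^{-z}$ gives no gain in the $\eta$-variable. What does hold, and is all the conclusion needs, is the crude triangle-inequality estimate
\[
  \ler^{\Re p-\epsilon}\,|\tilde\chi_{p,k}(z,\eta)|\;\lesssim\;\sum_{j=0}^{k}\bigl|\partial_z^{\,j}g_0(z)\bigr|,
  \qquad z\in\Gamma_{\Re p-\epsilon},
\]
uniformly in $\eta$, because $\ler^{\Re p-\epsilon}\ler^{-\Re p+\epsilon}=1$ and $\ler^{-\epsilon}\log^{m}\!\ler\lesssim1$. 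Substituting this into Lemma~\ref{Lem3.4} gives $\|\tilde v'(z,\cdot)\|_{H^{s+\Re p-\epsilon}}\lesssim\|w\|_{H^s}\sum_j|\partial_z^{\,j}g_0(z)|$, and the $z$-integral over $\Gamma_{\Re p-\epsilon}$ converges because the line stays at distance $\epsilon>0$ from the pole and the $\partial_z^{\,j}g_0$ decay rapidly there; the $L^2$-weighted part and the $s+\Re p-\epsilon<0$ case go as you indicated. The cancellation you isolate is genuine — it makes $\tilde\chi_{p,k}(z,\cdot)$ holomorphic across $z=p$ — but that plays no role in the estimate on $\Gamma_{\Re p-\epsilon}$, which is what drives the result; the paper's contour-shift formulation sidesteps the issue entirely.
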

\begin{proof}
Let $m(z)$ denote the Mellin transform of
$\varphi(x)x^{-p}\log^k\!x$. Then $m(z)$ is meromorphic in $\C$ with a
single pole of order $k+1$ at $z=p$. In addition, $m(z) = (-1)^k k!
\left(z-p\right)^{-(k+1)} + O(1)$ as $z\to p$ and $(\chi m)(z)\in
\cC^\infty(\R_\beta;\mathcal S(\R_ \tau))$, where $\chi\in
\cC^\infty(\C)$, $\chi(z)=0$ for $|z-p|\leq 1/2$, and $\chi(z)=1$ for
$|z-p|\geq 1$ (see Lemma~\ref{ppl}).  Recall that we have written
$z=\beta+\ii \tau$ with $\beta,\,\tau \in \R$.

Direct calculations show that
\[
  \tilde v(z,y) = \mathcal F^{-1}_{\eta\to
    y}\bigl\{\ler^{-z}m(z)\hat w(\eta)\bigr\}.
\]
In particular, $\tilde v(z,\cdot)$ is meromorphic in $z\in\C$ taking
values in $H^{-\infty}(\R^d)$ with a single pole of order $k+1$ at
$z=p$ and
\begin{equation}\label{nun}
  \tilde v(z,\cdot) = \sum_{r=0}^k\frac{(-1)^{k-r}
    k!}{r!}\,(z-p)^{-(k-r+1)}\,\langle D\rangle^{-p} \log^r\langle
  D\rangle w+ O(1) \quad \text{as $z\to p$}
\end{equation}
in view of $\ler^{-z} = \sum_{r=0}^k
\frac{(-1)^r}{r!}\,\ler^{-p}\log^r\ler\,(z-p)^r + O((z-p)^{k+1})$ as
$z\to p$.

\smallskip

For all $t\in\R$ and $\epsilon>0$, 
\begin{multline*}
  \frac1{2\pi\ii}\int_{\Gamma_{\Re p+\epsilon}} \langle z\rangle^{2t}
  \|\tilde v(z,\cdot)\|_{H^{s+\Re p+\epsilon}(\R^d)}^2\,\dd z  \\ = \, \frac1{2\pi\ii}
  \int_{\Gamma_{\Re p+\epsilon}} \langle z\rangle^{2t}
  |m(z)|^2  \left( \int_{\R^d}
  \langle\eta\rangle^{2(s+\Re p+\epsilon)} \langle\eta\rangle^{-2\Re z}
  |\hat w(\eta)|^2\,\dbar \eta\right)\dd z <\infty.
\end{multline*}
This implies that $v\in \cH^{s+\Re p+\epsilon,1/2-\Re
  p-\epsilon}(\R_+^{1+d})$ for $s+\Re p+\epsilon\geq0$ by
Lemma~\ref{Lem3.4} and then for $s+\Re p+\epsilon<0$ by duality.

\smallskip

Denote $v''(x,y) = (2\pi\ii)^{-1}\int_{\Gamma_{\Re p-\epsilon'}}x^{-z}
\tilde v(z,y)\,\dd z$, where $\epsilon'>0$ is arbitrary. Repeating the
argument just given, one finds that $v''\in \bigcap_{\epsilon>0}
\cH^{s+\Re p-\epsilon,1/2-\Re p+\epsilon}(\RR)$. Furthermore, by
\eqref{nun} and Cauchy's integral theorem,
\[
  v(x,\cdot) - v''(x,\cdot) = \bigl[\left( x\langle D_y\rangle\right)^{-p}
  \log^k\!\left(x\langle D_y\rangle\right) w\bigr](y).
\]
One obtains \eqref{nunu} by multiplying the last equation by
$\varphi(x)$ and taking into account that $\varphi(x)\varphi(x\ler)=
\varphi(x\ler)$ for all $\eta\in\R^d$.
\end{proof}

\begin{lem} \label{Lem3.8}
Let $(p,k)\in\C\times\N_0$. Suppose that $w\in H^{s,\langle
  k\rangle}(\R^d)$ for some $s\in\R$. Then
\[
  \mathcal F^{-1}_{\eta\to y}\{\varphi(x\ler)\hat{w}(\eta)\}x^{-p}\log^k\!x \in
   \bigcap_{\epsilon>0}\cK^{s+\epsilon,1/2-\Re p-\epsilon}(\R_+^{1+d})
\] 
and
\[
  \mathcal F^{-1}_{\eta\to
    y}\{\varphi(x\ler)\hat{w}(\eta)\}x^{-p}\log^k\!x -
  \varphi(x)x^{-p}\log^k\! x\,w(y)\in
  \bigcap_{\epsilon>0}\cK^{s-\epsilon,1/2-\Re p+\epsilon}(\R_+^{1+d}).
\]
\end{lem}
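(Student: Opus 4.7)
The plan is to follow the Mellin-based strategy used in Lemma~\ref{Lem3.5}. Let $u(x,y):=\mathcal F^{-1}_{\eta\to y}\{\varphi(x\ler)\hat w(\eta)\}\,x^{-p}\log^k\!x$. Substituting $u=x\ler$ in the Mellin integral $M_x[\varphi(x\ler)\,x^{-p}\log^k\!x](z)$, I obtain
\[
\widetilde u(z,y)=\mathcal F^{-1}_{\eta\to y}\Bigl\{\hat w(\eta)\,\ler^{p-z}\sum_{j=0}^{k}\binom{k}{j}(-\log\ler)^{k-j}\,\partial_z^j\widetilde\varphi(z-p)\Bigr\},
\]
where $\widetilde\varphi(z)=1/z+h(z)$ with $h$ entire, and $\widetilde\varphi$ together with all its derivatives is rapidly decreasing on every vertical line avoiding $z=0$ (integration by parts writes $\widetilde\varphi(z)=-M[\varphi'](z+1)/z$, and $\varphi'$ has compact support in $(0,\infty)$). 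In particular, $\widetilde u(z,\cdot)$ is meromorphic in $z\in\C$ with a single pole, of order $k+1$, at $z=p$.

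For the first inclusion, I would apply Lemma~\ref{Lem3.4} on the weight line $\Gamma_{\Re p+\epsilon}$, noting that $u$ is automatically compactly supported in $x$ since $\varphi(x\ler)$ vanishes for $x\gtrsim 1$ uniformly in $\ler\geq 2$. By Plancherel in $\eta$,
\[
\|\widetilde u(z,\cdot)\|_{H^{s+\epsilon}(\R^d)}^2\lesssim\sum_{j=0}^k\bigl|\partial_z^j\widetilde\varphi(z-p)\bigr|^2\int_{\R^d}\ler^{2s}|\log\ler|^{2(k-j)}|\hat w(\eta)|^2\,\dd\eta,
\]
and the hypothesis $w\in H^{s,\langle k\rangle}(\R^d)$ makes each summand finite; the $z$-integral on $\Gamma_{\Re p+\epsilon}$ then converges because every $\partial_z^j\widetilde\varphi(z-p)$ is Schwartz on that line. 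The $\langle z\rangle^{2(s+\epsilon)}\|\widetilde u(z,\cdot)\|_{L^2}^2$ term is handled in the same way.

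For the second inclusion, define $v''(x,y):=(2\pi\ii)^{-1}\int_{\Gamma_{\Re p-\epsilon'}}x^{-z}\widetilde u(z,y)\,\dd z$. An identical weighted estimate on $\Gamma_{\Re p-\epsilon'}$ shows $\varphi\,v''\in\cH^{s-\epsilon,1/2-\Re p+\epsilon'}(\RR)$. Cauchy's theorem gives
\[
u(x,y)-v''(x,y)=\operatorname{Res}_{z=p}\bigl[x^{-z}\widetilde u(z,y)\bigr].
\]
Computing the residue via the Laurent expansion $x^{-z}\ler^{p-z}=x^{-p}\sum_\ell(-\log(x\ler))^\ell(z-p)^\ell/\ell!$ together with $\partial_z^j\widetilde\varphi(z-p)=(-1)^jj!(z-p)^{-j-1}+\text{holomorphic}$, and collapsing the resulting sum by the binomial identity $\sum_{j=0}^k\binom{k}{j}\log^j(x\ler)(-\log\ler)^{k-j}=\log^k\!x$, I get $\operatorname{Res}_{z=p}[x^{-z}\widetilde u(z,y)]=x^{-p}\log^k\!x\,w(y)$. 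Rearranging,
\[
u-\varphi(x)x^{-p}\log^k\!x\,w(y)=v''+(1-\varphi(x))x^{-p}\log^k\!x\,w(y).
\]
The left-hand side has compact support in $x$, so the same is true of the right-hand side; near $x=0$ the correction $(1-\varphi)$ vanishes and the right-hand side reduces to $v''\in\cH^{s-\epsilon,1/2-\Re p+\epsilon'}$, from which membership in $\cK^{s-\epsilon,1/2-\Re p+\epsilon'}(\RR)$ follows.

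I expect the principal obstacle to be the bookkeeping of the extra $\log\ler$ factors produced by the expansion $\log^k(x\ler)=(\log x+\log\ler)^k$: these are exactly what forces the strengthening of the hypothesis from $w\in H^s$ (as in Lemma~\ref{Lem3.5}) to $w\in H^{s,\langle k\rangle}$, and they must be carried through both the weighted $L^2$-bound and the residue collapse. The interchange of $\eta$-Fourier inversion with $z$-contour shifting is routine, being justified exactly as in Lemma~\ref{Lem3.5}.
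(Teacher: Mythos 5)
Your proof is correct but takes a genuinely different route from the paper's. The paper proves Lemma~\ref{Lem3.8} by induction on $k$: it sets $w_0=\mathcal F^{-1}\{\ler^p\hat w\}$, $w_r=\mathcal F^{-1}\{\log^r\ler\,\hat w\}$, rewrites $\log^k\!x=\log^k(x\ler)-\sum_{r=1}^k\binom{k}{r}\log^r\ler\,\log^{k-r}\!x$, reduces the first resulting term to Lemma~\ref{Lem3.5} applied to $w_0$, and handles the remaining terms (with lower powers of $\log x$) by the induction hypothesis. You instead inline the whole Mellin/residue machinery that underlies Lemma~\ref{Lem3.5}, computing $\tilde u(z,\cdot)$ explicitly and shifting the contour across $z=p$. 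The residue calculation is the nice part: the $\eta$-dependent factors $\log^{k-j}\!\ler$ cancel exactly after the binomial collapse, giving a residue $x^{-p}\log^k\!x\,w(y)$ that is independent of $\eta$ — this makes transparent why the asymptotic term has precisely that form and why the hypothesis $w\in H^{s,\langle k\rangle}$ (rather than $H^s$) is what is needed to control the $\log^{k-j}\!\ler$ factors on the weight lines. The paper's argument is more modular; yours is more self-contained and arguably more conceptual. One small point you share with the paper's proof of Lemma~\ref{Lem3.5} and should at least flag: Lemma~\ref{Lem3.4}, as stated, only applies for Sobolev order $\geq 0$, so for $s+\epsilon<0$ in the first inclusion (resp.\ $s-\epsilon<0$ in the second) you must pass to the case $\geq 0$ by duality, as the paper does explicitly in Lemma~\ref{Lem3.5}.
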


\begin{proof}
We set $w_0 = \mathcal F^{-1} \{\ler^p\hat{w}(\eta)\}\in H^{s-\Re p, \langle
  k\rangle}(\R^d)$ and proceed by induction on $k$.

For $k=0$, one has
\[
  \mathcal F^{-1}_{\eta\to y}\{\varphi(x\ler)\hat{w}(\eta)\}x^{-p} =
  \mathcal F^{-1}_{\eta\to y} \bigl\{\varphi(x\ler) (x\ler)^{-p}
  \hat{w}_0(\eta)\bigr\}\in
  \bigcap_{\epsilon>0}\cK^{s+\epsilon,1/2-\Re p-\epsilon}(\R_+^{1+d})
\]
and
\[
  \mathcal F^{-1}_{\eta\to y}\{\varphi(x\ler)\hat{w}(\eta)\}x^{-p} -
  \varphi(x)x^{-p}w(y) \in \bigcap_{\epsilon>0}\cK^{s-\epsilon,1/2-\Re
    p+\epsilon}(\R_+^{1+d})
\]
by Lemma~\ref{Lem3.5}.
 
For $k\geq1$, we set $w_r = \mathcal F^{-1}_{\eta\to y} \{ \log^r\ler \,
\hat{w}(\eta)\}\in H^{s,\langle k-r\rangle}(\R^d)$ for $1\leq r\leq k$.
Then, by Lemma~\ref{Lem3.5} and the induction hypothesis,
\begin{multline*}
  \mathcal F_{\eta\to
    y}^{-1}\{\varphi(x\ler)\hat{w}(\eta)\}x^{-p}\log^k\!x = \mathcal
  F_{\eta\to y}^{-1} \bigl\{\varphi(x\ler)(x\ler)^{-p}\log^k(x\ler)
  \hat{w}_0(\eta)\bigr\} \\ - \sum_{r=1}^k \binom{k}{r}\mathcal F_{\eta\to y}^{-1}
  \bigl\{\varphi(x\ler) \hat{w}_r(\eta)\bigr\} x^{-p}\log^{k-r}\!x \in
  \bigcap_{\epsilon>0}\cK^{s+\epsilon,1/2-\Re p-\epsilon}(\R_+^{1+d})
\end{multline*}
as well as 
\begin{multline*}
  \mathcal F_{\eta\to
    y}^{-1}\{\varphi(x\ler)\hat{w}(\eta)\}x^{-p}\log^k\!x -
  \varphi(x)x^{-p}\log^k\! x\,w(y) \\ = \left( \mathcal F_{\eta\to
    y}^{-1} \bigl\{\varphi(x\ler)(x\ler)^{-p}\log^k(x\ler)
  \hat{w}_0(\eta)\bigr\} - \varphi(x) \bigl[\left( x\langle
    D_y\rangle\right)^{-p} \log^k\!\left(x\langle D_y\rangle\right)
    w_0\bigr](y) \right) \\ - \sum_{r=1}^k \binom{k}{r}\left( \mathcal
  F_{\eta\to y}^{-1} \bigl\{\varphi(x\ler) \hat{w}_r(\eta)\bigr\}
  x^{-p}\log^{k-r}\!x -\varphi(x)x^{-p}\log^{k-r}\!x\,w_r(y)\right)
  \\ \in \bigcap_{\epsilon>0}\cK^{s-\epsilon,1/2-\Re
    p+\epsilon}(\R_+^{1+d}).
\end{multline*}
This finishes the proof.
\end{proof}

We are now ready to introduce potential operators. 
\begin{defn} \label{Def3.9}
For $(p,k)\in \C\times\N_0$, the \textit{potential operator\/}
$\Gamma_{pk}$ acting on functions $w=w(y)$ is given by
\begin{equation}\label{nmu}
   (\Gamma_{pk} w)(x,y) = \frac{(-1)^k}{k!}\,\mathcal
    F^{-1}\left\{ \varphi(x\langle\eta\rangle)\hat{w}(\eta)\right\}
    x^{-p}\log^k\!x.
\end{equation}
\end{defn}

The role played by the normalizing factor $(-1)^k/k!$ becomes apparent
from Lemma~\ref{paul} in conjunction with \eqref{impo}. Based on
Lemma~\ref{Lem3.8}, we have that, for any $s\in\R$,
\[
  \Gamma_{pk}\colon H^{s,\langle k\rangle}(\R^d) \to
  \bigcap_{\epsilon>0} \cK^{s+\epsilon,1/2-\Re p-\epsilon}(\R_+^{1+d}).
\]

%% ---------------------------------------------------------------------------------------

\subsubsection{Sobolev spaces with asymptotics}\label{SSWA}

The definition of the Sobolev spaces with asymptotics postu\-lates an
improvement of the conormal order up to $\delta+\theta$ upon
subtracting finitely many asymptotic terms. This works as long as no
singular exponent $p$ from the asymptotic type $P$ comes to lay on the
weight line $\Gamma_{1/2-\delta-\theta}$. This leaves out a discrete
set of values for $\theta>0$. The general case is then handled by
complex interpolation.

\begin{defn} \label{Def3.10}
Let $s\in\R$, $P\in \underline{\textup{As}}^\delta$, and
$\theta\geq0$.

%\noindent
(i) \ For $\pi_\C P\cap \Gamma_{1/2-\delta-\theta}=\emptyset$, the
space $H_{P,\theta}^{s,\delta}(\overline\R_+^{1+d})$ consists of all
$v\in\mathcal K^{s,\delta}(\R_+^{1+d})$ for which there are functions
$v_{pk} \in H^{s+\Re p+\delta-1/2,\langle k\rangle}(\R^d)$ for
$(p,k)\in P$, $\Re p>1/2-\delta-\theta$ such that
\begin{equation}\label{decomp}
 v- \sum_{\substack{(p,k)\in P,\\\Re p>1/2-\delta-\theta}}
 \Gamma_{pk} v_{pk} \in
 \cK^{s-\theta,\delta+\theta}(\R_+^{1+d}).
\end{equation}

%\noindent
(ii) \ For general $\theta\geq0$, the space
$H_{P,\theta}^{s,\delta}(\overline\R_+^{1+d})$ is then defined by
complex interpolation with respect to the parameter $\theta$.
\end{defn}

For $s\geq0$, we also write
$H_P^{s,\delta}(\overline\R_+^{1+d})=H_{P,s}^{s,\delta}(\overline\R_+^{1+d})$.

\begin{ex} \label{rem3.11}
The spaces $H_{P,\theta}^{s,\delta}(\overline\R_+^{1+d})$ constitute a
natural generalization of the standard Sobolev spaces in view of the
following two facts: 

(i) \ For $s \geq0$, $H^s(\R_+^{1+d}) =
  H_{P_0}^{s,0}(\overline{\R}_+^{1+d})$, where $P_0$ is the type for
  Taylor asymptotics.

(ii) \ For $s\geq0$,
  $H_0^s(\overline{\R}_+^{1+d}) = H_{\mathcal O}^{s,0}(\overline{\R}_+^{1+d})$,
  where $\mathcal O$ is the empty asymptotic type. 
\end{ex}

It is not hard to see that the coefficients $v_{pk}$ in \eqref{decomp}
are uniquely determined. We then introduce, for $(p,k)\in P$ with $\Re
p>1/2-\delta-\theta$, the \textit{trace operators\/}
\begin{equation}\label{trace_op}
  \gamma_{pk} \colon H_{P,\theta}^{s,\delta}(\overline\R_+^{1+d}) \to
  H^{s+\Re p+\delta-1/2,\langle k\rangle}(\R^d), \quad v \mapsto v_{pk}.
\end{equation}
Essentially by definition, we have the following trace theorem.

\begin{prop}\label{trace_thm}
Let $s\in\R$, $P\in \underline{\textup{As}}^\delta$, $\theta\geq0$,
and $\pi_\C P\cap \Gamma_{1/2-\delta-\theta}=\emptyset$. Then the
short sequence
\[
  0 \longrightarrow H_{\mathcal O,\theta}^{s,\delta}(\oRR)
  \longrightarrow H_{P,\theta}^{s,\delta}(\oRR)
  \xrightarrow{(\gamma_{pk})} \bigoplus_{\substack{(p,k)\in
      P,\\\Re p>1/2-\delta-\theta}}H^{s+\Re
    p+\delta-1/2,\langle k\rangle}(\R^d) \longrightarrow 0
\]
is split exact.
\end{prop}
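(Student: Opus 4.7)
The plan is to read off the sequence directly from Definition~\ref{Def3.10}, relying on Lemma~\ref{Lem3.8} for the mapping properties of the $\Gamma_{pk}$.

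First I would establish the uniqueness of the coefficients $v_{pk}$ in the decomposition \eqref{decomp}; this is what makes $\gamma_{pk}v=v_{pk}$ well-defined (and continuous by the open mapping theorem). Uniqueness reduces to showing that if $\sum_{(p,k)\in Q}\Gamma_{pk}v_{pk}\in\cK^{s-\theta,\delta+\theta}(\RR)$ with $Q=\{(p,k)\in P:\Re p>1/2-\delta-\theta\}$, then all $v_{pk}=0$. Ordering $Q$ by decreasing $\Re p$ and letting $p_0$ be the maximum, the second assertion of Lemma~\ref{Lem3.8} shows that the leading block $\sum_{(p_0,k)\in Q}\Gamma_{p_0 k}v_{p_0 k}$ has a Mellin transform in $x$ with a pole at $z=p_0$ whose principal part is an explicit expression in the $v_{p_0 k}(y)$, while all terms with $\Re p<p_0$ and the remainder are regular at $z=p_0$ (the latter thanks to the gap condition $\pi_\C P\cap\Gamma_{1/2-\delta-\theta}=\emptyset$, which ensures the strict separation $\delta+\theta>1/2-p_0$). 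Extracting residues forces the $v_{p_0 k}$ to vanish, and induction on $p_0$ completes the step.

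Exactness at $H_{P,\theta}^{s,\delta}(\oRR)$ is then immediate: $\gamma_{pk}v=0$ for all $(p,k)\in Q$ reduces \eqref{decomp} to $v\in\cK^{s-\theta,\delta+\theta}(\RR)$, which combined with $v\in\cK^{s,\delta}(\RR)$ is the definition of $H_{\mathcal O,\theta}^{s,\delta}(\oRR)$. For surjectivity with simultaneous splitting, I would introduce the right inverse
\[
  \sigma\bigl((w_{pk})\bigr)=\sum_{\substack{(p,k)\in P,\\\Re p>1/2-\delta-\theta}}\Gamma_{pk}w_{pk},
\]
which is a finite sum because $\Re p\to-\infty$ on $P$. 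The key verification is that each summand lies in $\cK^{s,\delta}(\RR)$; applying Lemma~\ref{Lem3.8} at the endpoint $\epsilon=1/2-\Re p-\delta$, which is strictly positive by property~(i) of Definition~\ref{Def3.1}, yields
\[
  \Gamma_{pk}w_{pk}\in\cK^{s+\Re p+\delta-1/2+\epsilon,\,1/2-\Re p-\epsilon}(\RR)=\cK^{s,\delta}(\RR).
\]
The trivial decomposition $\sigma((w_{pk}))-\sum\Gamma_{pk}w_{pk}=0\in\cK^{s-\theta,\delta+\theta}(\RR)$ then places $\sigma((w_{pk}))$ in $H_{P,\theta}^{s,\delta}(\oRR)$, and by the uniqueness proved above $(\gamma_{pk})\circ\sigma=\mathrm{id}$.

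The main obstacle is the endpoint calculation $\Gamma_{pk}w_{pk}\in\cK^{s,\delta}(\RR)$. Lemma~\ref{Lem3.8} gives membership in $\cK^{s_{pk}+\epsilon,\,1/2-\Re p-\epsilon}(\RR)$ for every $\epsilon>0$ with $s_{pk}=s+\Re p+\delta-1/2$; taking $\epsilon$ precisely at the (strictly positive) boundary value $1/2-\Re p-\delta$ lands the term in exactly the basic space $\cK^{s,\delta}(\RR)$, and this numerical match is what motivates the indexing convention $s+\Re p+\delta-1/2$ for the trace target spaces. Once this endpoint identification is accepted, the rest of the argument is bookkeeping.
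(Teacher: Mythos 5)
Your proof is correct and fills in exactly the details the paper elides: the paper offers no proof for this proposition, saying only that it holds ``essentially by definition'' after noting earlier that ``it is not hard to see'' the coefficients $v_{pk}$ in \eqref{decomp} are unique, and your Mellin-residue argument for uniqueness together with the endpoint evaluation $\epsilon=1/2-\Re p-\delta>0$ in Lemma~\ref{Lem3.8} (landing $\Gamma_{pk}w_{pk}$ exactly in $\cK^{s,\delta}(\RR)$) supply precisely the computations behind those remarks. One small notational slip: ``letting $p_0$ be the maximum'' conflates the maximal real part $\beta_0=\max\{\Re p:(p,k)\in Q\}$ with a single complex $p$; by discreteness of $P$ and $\Re p\to-\infty$ there are only finitely many $p$ on the line $\Re z=\beta_0$, and the residues should be extracted at each of those poles simultaneously before descending to the next real part --- this does not affect the argument.
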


The next result tells us that, for theoretical purposes, one can
adjust the reference conormal order $\delta$ to be any given real number.

\begin{lem}\label{lem_310}
Let $s\in\R$, $P\in \underline{\textup{As}}^\delta$, $\theta\geq0$,
and $\varrho\in\R$. Then multiplication by $\psi^\varrho(x)$ realizes
an isomorphism between $H_{P,\theta}^{s,\delta}(\overline\R_+^{1+d})$
and $H_{T^\varrho P,\theta}^{s,\delta+\varrho}(\overline\R_+^{1+d})$.
\end{lem}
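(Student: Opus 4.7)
The plan is to realize multiplication by $\psi^\varrho$ as an isomorphism by treating separately its action on the weighted background space $\cK^{s,\gamma}$ and its effect on each asymptotic term $\Gamma_{pk}$. Because the inverse is manifestly multiplication by $\psi^{-\varrho}$, it suffices to prove boundedness from $H_{P,\theta}^{s,\delta}(\oRR)$ into $H_{T^\varrho P,\theta}^{s,\delta+\varrho}(\oRR)$; the opposite direction then follows by running the same argument with $(-\varrho,T^\varrho P)$ in place of $(\varrho,P)$ and noting $T^{-\varrho}T^\varrho P=P$.

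\textbf{Step 1 (underlying weighted spaces).} I would first show that $\psi^\varrho\colon \cK^{s,\gamma}(\RR)\to\cK^{s,\gamma+\varrho}(\RR)$ is an isomorphism for every $s,\gamma\in\R$. Using the cutoff $\varphi$, chosen with $\supp\varphi\subset[0,1/2]$ so that $\varphi\psi^\varrho=\varphi\, x^\varrho$, one decomposes $\psi^\varrho u = x^\varrho(\varphi u)+(1-\varphi)\psi^\varrho u$; under the Mellin transform, multiplication by $x^\varrho$ realizes the shift $\tilde u(z)\mapsto \tilde u(z+\varrho)$, which by Lemma~\ref{Lem3.4} maps $\cH^{s,\gamma}(\RR)$ isomorphically onto $\cH^{s,\gamma+\varrho}(\RR)$. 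On the complementary part the function $(1-\varphi)\psi^\varrho$ is smooth and bounded together with all its derivatives (it equals $1$ for $x\geq 1$), so multiplication by it preserves $H^s(\RR)$ on the part of $u$ living away from $x=0$.

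\textbf{Step 2 (the asymptotic part).} Assume first $\pi_\C P\cap \Gamma_{1/2-\delta-\theta}=\emptyset$ and decompose $v\in H_{P,\theta}^{s,\delta}(\oRR)$ as in \eqref{decomp}, with coefficients $v_{pk}\in H^{s+\Re p+\delta-1/2,\langle k\rangle}(\R^d)$ and remainder $v_0\in\cK^{s-\theta,\delta+\theta}(\RR)$. Direct inspection of \eqref{nmu} yields the pointwise identity
\[
x^\varrho\,\Gamma_{pk} w = \Gamma_{p-\varrho,k} w,
\]
so the residual
\[
R_{pk} := \psi^\varrho\Gamma_{pk}v_{pk} - \Gamma_{p-\varrho,k}v_{pk} = (\psi^\varrho - x^\varrho)\,\Gamma_{pk}v_{pk}
\]
is supported in a compact subset of $(0,\infty)_x\times\R^d_y$, since $\Gamma_{pk}v_{pk}$ is supported in $\{x\lesssim 1\}$ while $\psi^\varrho-x^\varrho$ vanishes for $0<x\leq 1/2$. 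Such a residual admits arbitrary conormal weight, while its Sobolev regularity is controlled by that of $v_{pk}$ through Lemma~\ref{Lem3.8}; the strict inequality $s+\Re p+\delta-1/2>s-\theta$ (forced by $\Re p>1/2-\delta-\theta$) then places $R_{pk}\in\cK^{s-\theta,(\delta+\varrho)+\theta}(\RR)$. Summing over $(p,k)$ and applying Step~1 to $v_0$ gives
\[
\psi^\varrho v - \sum_{\substack{(p,k)\in P\\ \Re p>1/2-\delta-\theta}}\Gamma_{p-\varrho,k}v_{pk}\in \cK^{s-\theta,(\delta+\varrho)+\theta}(\RR),
\]
which, since $(p-\varrho,k)$ ranges over exactly $\{(q,k)\in T^\varrho P \mid \Re q>1/2-(\delta+\varrho)-\theta\}$ and $v_{pk}\in H^{s+\Re(p-\varrho)+(\delta+\varrho)-1/2,\langle k\rangle}(\R^d)$, is precisely the decomposition certifying $\psi^\varrho v\in H_{T^\varrho P,\theta}^{s,\delta+\varrho}(\oRR)$.

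\textbf{Step 3 and main obstacle.} For values of $\theta$ with $\pi_\C P\cap \Gamma_{1/2-\delta-\theta}\neq\emptyset$, the conclusion passes by complex interpolation in $\theta$, consistently with how $H_{P,\theta}^{s,\delta}(\oRR)$ is itself defined at non-admissible $\theta$ in Definition~\ref{Def3.10}(ii). The only subtle point in the proof is the correct placement of the residual $R_{pk}$: one has to simultaneously exploit the $x$-compactness of $\supp R_{pk}$ (to absorb the conormal weight $(\delta+\varrho)+\theta$) and the $y$-regularity of $v_{pk}$ furnished by the trace theorem (Proposition~\ref{trace_thm}) to secure the Sobolev order $s-\theta$. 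All other ingredients are bookkeeping on indices $(p,k)\mapsto(p-\varrho,k)$ and on weights $\delta\mapsto\delta+\varrho$.
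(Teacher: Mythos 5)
The paper dismisses this as ``A straightforward verification,'' so your write-up supplies exactly the details the authors left implicit, and it is correct. Your decomposition into Step 1 (Mellin-shift argument for the underlying $\cK$-spaces) plus Step 2 (the pointwise identity $x^\varrho\,\Gamma_{pk}w = \Gamma_{p-\varrho,k}w$ together with the observation that the residual $(\psi^\varrho-x^\varrho)\Gamma_{pk}v_{pk}$ is $x$-supported in a compact subinterval of $(0,\infty)$) is the natural route and matches what the authors almost certainly intended.

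One small imprecision: the residual $R_{pk}$ is not ``supported in a compact subset of $(0,\infty)_x\times\R^d_y$'' --- it has compact $x$-support but is in general not compactly supported in $y$. What you actually need, and what your argument really uses, is the compactness of the $x$-support alone: on the slab $\{1/2\leq x\leq C\}$ the conormal weight is irrelevant, and the regularity in $y$ is then read off from Lemma~\ref{Lem3.8} (which gives $\Gamma_{pk}v_{pk}\in\cK^{s+\Re p+\delta-1/2+\ve,\cdot}$, hence $H^{s+\Re p+\delta-1/2+\ve}$ locally in $x$ but globally in $y$). The strict inequality $s+\Re p+\delta-1/2>s-\theta$ for $\Re p>1/2-\delta-\theta$ then does the rest. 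With that wording tightened, the proof is complete.
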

\begin{proof}
A straightforward verification.
\end{proof}  

%% ---------------------------------------------------------------------------------------

\subsubsection{The Schwartz class with asymptotics of type $P$}

At last, we introduce a replacement for the space $\cS(\oRR)=
\bigl\{v |_{\,\RR}\bigm| v\in \cS(\R^{1+d})\bigr\}$. First we let
$\cS_0(\oRR)$ be the space of all $u\in \cS(\oRR)$ that vanish to
infinite order at $x=0$.

\begin{defn}
For $P\in\underline{\textup{As}}^\delta$, the space $\cS_P(\oRR)$
consists of all $v$ for which there are sequences
$(v_{pk})_{(p,k)\in P} \subset \cS(\R^d)$ and $(c_p)_{p\in\pi_\C
  P}\subset \R_+$ with $c_p\to \infty$ as $\Re p\to-\infty$
sufficiently fast such that
\begin{equation}\label{defnsp}
   v(x,y) - \sum_{(p,k)\in P} \frac{(-1)^k}{k!}\,\varphi(c_p x) x^{-p}
   \log^k\! x\, v_{pk}(y) \in \cS_0(\oRR).
\end{equation}
\end{defn}

The space $\cS_P(\oRR)$ is a nuclear Fr\'echet space in a natural
way. Furthermore, upon an appropriate choice of $(c_p)$ depending on
$(v_{pk})$, the series in the left-hand side of \eqref{defnsp}
converges absolutely in $\cS_P(\oRR)$. Moreover, $\cS_P(\oRR)\subseteq
H_{P,\theta}^{s,\delta}(\oRR)$ for any $s,\theta$ and, for $v$
as in \eqref{defnsp} and $(p,k)\in P$, we have
$\gamma_{pk}v=v_{pk}$. Indeed, the short sequence
\[
   0 \longrightarrow \cS_0(\oRR) \longrightarrow \cS_P(\oRR)
   \xrightarrow{(\gamma_{pk})_{(p,k)\in P}} \bigoplus_{(p,k)\in P}
   \cS(\R^d) \longrightarrow 0
\]
is exact.

\begin{ex}
One has $\cS_{P_0}(\oRR)=\cS(\oRR)$ and $\cS_{\mathcal O}(\oRR)=\cS_0(\oRR)$.
\end{ex}

\begin{lem}\label{dense}
The space $\cS_P(\oRR)$ is dense in $H_{P,\theta}^{s,\delta}(\oRR)$.
\end{lem}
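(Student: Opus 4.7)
The plan is to exploit the splitting of Proposition~\ref{trace_thm} so as to reduce the problem to two independent density statements: density of $\cS(\R^d)$ in the trace Sobolev spaces $H^{s+\Re p+\delta-1/2,\langle k\rangle}(\R^d)$ (routine), and density of $\cS_0(\oRR)$ in the ``body'' space $H_{\mathcal O,\theta}^{s,\delta}(\oRR)$ (the main technical step). The generic case $\pi_\C P\cap\Gamma_{1/2-\delta-\theta}=\emptyset$ is handled directly; the remaining values of $\theta$ are caught afterwards by complex interpolation.

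Assume first that $\pi_\C P\cap \Gamma_{1/2-\delta-\theta}=\emptyset$, and let $v\in H_{P,\theta}^{s,\delta}(\oRR)$. By Proposition~\ref{trace_thm}, using the uniquely determined traces, write
\[
  v = \sum_{\substack{(p,k)\in P,\\ \Re p>1/2-\delta-\theta}} \Gamma_{pk}\gamma_{pk}v + v',\qquad v'\in H_{\mathcal O,\theta}^{s,\delta}(\oRR),
\]
which is a finite sum by condition (ii) in Definition~\ref{Def3.1}. Approximate each $\gamma_{pk}v$ by a sequence $v_{pk}^{(n)}\in\cS(\R^d)$ in $H^{s+\Re p+\delta-1/2,\langle k\rangle}(\R^d)$, and (granting the density claim below) approximate $v'$ by some $w^{(n)}\in\cS_0(\oRR)$ in $H_{\mathcal O,\theta}^{s,\delta}(\oRR)$. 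Set
\[
  v^{(n)} = \sum_{\substack{(p,k)\in P,\\ \Re p>1/2-\delta-\theta}} \Gamma_{pk} v_{pk}^{(n)} + w^{(n)}.
\]
Convergence $v^{(n)}\to v$ in $H_{P,\theta}^{s,\delta}(\oRR)$ is immediate from the continuity of $\Gamma_{pk}$ and the definition of the norm. To see that $v^{(n)}\in\cS_P(\oRR)$, I would verify that for $w\in\cS(\R^d)$ one has $\Gamma_{pk}w-\frac{(-1)^k}{k!}\varphi(x)x^{-p}\log^k\!x\,w(y)\in\cS_0(\oRR)$: the point is that $\mathcal F^{-1}\{\varphi(x\ler)\hat w(\eta)\}-w(y)$ equals $\mathcal F^{-1}\{(\varphi(x\ler)-1)\hat w(\eta)\}$, whose integrand is supported in $\ler\gtrsim 1/x$, and since $\hat w\in\cS(\R^d)$ this difference and all its derivatives are $O(x^N)$ for every $N$ as $x\to+0$.

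The main obstacle is the density of $\cS_0(\oRR)$ in $H_{\mathcal O,\theta}^{s,\delta}(\oRR)=\cK^{s,\delta}(\RR)\cap \cK^{s-\theta,\delta+\theta}(\RR)$. My plan is to combine cutoffs with a simultaneous Mellin--Fourier truncation: localise with $\varphi(\epsilon x)\,(1-\varphi(x/R))\,\chi(y/R)$ to pass to the interior, and on the Mellin side (using the characterisation in Lemma~\ref{Lem3.4} together with its analogue for the weight $1/2-\delta-\theta$) truncate in the Mellin covariable $\tau$ and in the Fourier covariable $\eta$. Because $\cS_0(\oRR)$ admits simultaneous description via rapid decay of $\widetilde u(z,\eta)$ on both weight lines, this truncation-plus-mollification procedure produces approximants converging in both $\cK^{s,\delta}(\RR)$ and $\cK^{s-\theta,\delta+\theta}(\RR)$ at once; this is standard once the Mellin picture is set up, but it is the only nontrivial analytic input.

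Finally, for general $\theta\geq 0$, choose $\theta_0<\theta<\theta_1$ with $\pi_\C P\cap\Gamma_{1/2-\delta-\theta_i}=\emptyset$ for $i=0,1$, so that by Definition~\ref{Def3.10}(ii) the space $H_{P,\theta}^{s,\delta}(\oRR)$ is a complex interpolation space of $H_{P,\theta_0}^{s,\delta}(\oRR)$ and $H_{P,\theta_1}^{s,\delta}(\oRR)$. Since $\cS_P(\oRR)$ lies in both spaces and is dense in each by the previous paragraphs, the standard density property of the complex interpolation functor for Banach couples with a common dense subspace gives density of $\cS_P(\oRR)$ in $H_{P,\theta}^{s,\delta}(\oRR)$, completing the proof.
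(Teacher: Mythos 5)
Your proof follows the same route as the paper: reduce by complex interpolation to the generic case $\pi_\C P\cap\Gamma_{1/2-\delta-\theta}=\emptyset$, split via the trace exact sequence of Proposition~\ref{trace_thm}, and combine density of $\cS_0(\oRR)$ in the body space, density of $\cS(\R^d)$ in the trace Sobolev spaces $H^{r,\langle l\rangle}(\R^d)$, and the fact that $\Gamma_{pk}$ maps $\cS(\R^d)$ into $\cS_P(\oRR)$. You spell out a couple of steps more carefully than the paper (which dispatches them with ``it is known'' and ``apparently true''), namely the explicit support/Schwartz argument for $\Gamma_{pk}w-\tfrac{(-1)^k}{k!}\varphi(x)x^{-p}\log^k\!x\,w\in\cS_0(\oRR)$ and the observation that the density of $\cS_0(\oRR)$ is really needed in the intersection $\cK^{s,\delta}\cap\cK^{s-\theta,\delta+\theta}$, not just in $\cK^{s-\theta,\delta+\theta}$.
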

\begin{proof}
By complex interpolation, we can assume that $\pi_\C P\cap
\Gamma_{1/2-\delta-\theta}=\emptyset$. It is known that
$\cS_0(\oRR)$ is dense in $\cK^{s-\theta,\delta+\theta}(\RR)$. In view
of \eqref{decomp} and as $\cS(\R^d)$ is dense in $H^{r,\langle
  l\rangle}(\R^d)$ for any $(r,l)\in\R\times\N_0$, it is enough to
show that $\Gamma_{pk}w\in \cS_P(\oRR)$ for $(p,k)\in P$ and $w\in
\cS(\R^d)$. The latter, in turn, will follow from the relation
\[
  \mathcal F_{\eta\to y}^{-1}\left\{\varphi(x\ler)\hat{w}(\eta)\right\}x^{-p}\log^k\!x -
  \varphi(x)x^{-p}\log^k\!x\, w(y)\in \cS_0(\oRR),
\]
which, however, is apparently true.
\end{proof}

%% ---------------------------------------------------------------------------------------

\subsection{Calculus of cone-degenerate pseudodifferential operators}

We now introduce the class \linebreak $\Psi_\cc^\mu(\oRR)$ of
cone-degenerate pseudodifferential operators on the half-space
$\R_+^{1+d}$ mentioned in the introduction. To make connection to the
theory of cone-degenerate pseudodifferential operators, note that the
closed half-space $\oRR$ is considered as a blowup of the cone
$(\overline{\R}_+\times\R^d)/(\{0\}\times\R^d)$. We do not provide
proofs for results that can be found in the literature in the form as
stated or in a similar form (then with no essential changes in the
proofs). Notable exceptions are Propositions~\ref{prop3.20}
%%\ref{ae3}, \ref{ibparts}, \ref{Lem4.3}, and
through \ref{Prop3.21}. For other
results, we refer to the literature, e.g., \textsc{Harutyunyan and
  Schulze} \cite{HS2008} or \textsc{Schulze} \cite{Sch1998}.

%% ---------------------------------------------------------------------------------------

\subsubsection{Parameter-dependent pseudodifferential operators}\label{parameter-dep}

We start with parameter-dependent pseudodifferential operators.

\begin{defn}\label{Def3.12}
For $\mu\in\R$, the class $\Psi_\cl^\mu(\R^d;\R)$ of classical
parameter-dependent pseudodifferential operators on $\R^d$, with
parameter $\tau\in\R$, consists of all families
$A=\left(A(\tau)\right)_{\tau\in\R}\subset \Psi_\cl^\mu(\R^d)$ such
that
\[
  A(\tau) u(y) = \int_{\R^d} \ee^{\ii y \cdot \eta} a(y,\eta,\tau)\hat
  u(\eta)\,\dbar \eta, \quad y\in\R^d,
\]
where $a\in S_\cl^\mu(\R_y^d\times\R_{(\eta,\tau)}^{d+1})$.
\end{defn}

Note that any $A\in \Psi_\cl^\mu(\R^d;\R)$ admits a
parameter-dependent principal symbol $\sigma_\psi^\mu(A) \in
S^{(\mu)}(\R^d\times(\R^{d+1}\setminus0))$. Then $A$ is
parameter-dependent elliptic if $\sigma_\psi^\mu(A)$ is nowhere
vanishing. In the elliptic case, $A$ admits a parametrix, i.e., there
exists a $B\in \Psi_\cl^{-\mu}(\R^d;\R)$ such that
$AB-\textup{I},BA-\textup{I}\in \mathcal
S(\R_\tau;\Psi^{-\infty}(\R^d))$. This parametrix $B$ is essentially
unique, i.e., it is unique modulo $\mathcal
S(\R;\Psi^{-\infty}(\R^d))$. Moreover, $A(\tau)$ is invertible for
$|\tau|$ large and $B$ can be chosen to satisfy $B(\tau) =
A(\tau)^{-1}$ for $|\tau|$ large. Note also that
$\Psi_\cl^\mu(\R^d;\R)$ equipped with its canonical system of
seminorms is a nuclear Fr\'echet space.

\begin{ex} 
$A= -\Delta_y+\tau^2+\tau \in \Psi_\cl^2(\R^d;\R)$ has principal
  symbol $\sigma_\psi^2(A)(y,\eta,\tau) = |\eta|^2+\tau^2$ and is
  parameter-dependent elliptic.
\end{ex}

%% ---------------------------------------------------------------------------------------

\subsubsection{Holomorphic Mellin symbols}\label{holom_Mellin}

Recall that we write $z\in\C$ as $z=\beta+\ii\tau$ with
$\beta,\tau\in\R$.

\begin{defn} \label{Def3.14}
For $\mu\in\R$, we define
\[ 
  \mathcal M^\mu(\R^d) = \mathcal H(\C;\Psi_\cl^\mu(\R^d)) \cap
  \cC^\infty(\R_\beta;\Psi_\cl^\mu(\R^d;\R_\tau)).
\]
as the set of holomorphic Mellin symbols of order $\mu$.
\end{defn} 

These Mellin symbols are entire functions of $z=\beta+i\tau$ taking
values in the nuclear Fr\'echet space $\Psi_\cl^\mu(\R^d)$ and are
also smooth functions of $\beta$ taking values in
$\Psi_\cl^\mu(\R^d;\R_\tau)$. Note that, as a consequence of the
Cauchy-Riemann equations, the principal symbol
$\sigma_\psi^\mu(h)(y,\eta,\tau)$ of $h\bigr|_{\Re
  z=\beta}\in\Psi_\cl^\mu(\R^d;\R_\tau)$ for $h\in\mathcal
M^\mu(\R^d)$ is independent of $\beta\in\R$.

\begin{prop}\label{hol_inv}
Let $a\in S^{(\mu)}(\R^d\times (\R^{d+1}\setminus0))$ be
elliptic. Then there exists a $h\in \mathcal M^\mu(\R^d)$ with
$\sigma_\psi^\mu(h)=a$ such that $h^{-1}\in \mathcal M^{-\mu}$
\end{prop}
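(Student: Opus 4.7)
The plan is to decompose the statement into two sub-problems: (i) produce a candidate holomorphic Mellin symbol $h_0 \in \mathcal{M}^\mu(\R^d)$ with $\sigma_\psi^\mu(h_0) = a$, and (ii) modify $h_0$ by an element of $\mathcal{M}^{-\infty}(\R^d)$ so as to obtain $h \in \mathcal{M}^\mu(\R^d)$ with $h(z)$ invertible in $\Psi_\cl^\mu(\R^d)$ for every $z \in \C$ and with the inverse family lying in $\mathcal{M}^{-\mu}(\R^d)$. The principal symbol is preserved throughout, since the modifications in (ii) have order $-\infty$.

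For (i), I would first fix a symbol $\tilde a \in S_\cl^\mu(\R^d \times \R^{d+1})$ extending $a$ and quantize it to obtain $A \in \Psi_\cl^\mu(\R^d;\R)$ with parameter-dependent principal symbol $a$. Schulze's kernel cut-off procedure then produces the desired entire extension: choosing a cut-off $\chi \in \cC^\infty(\R)$ with compact support and $\chi\equiv 1$ near the origin, one Fourier transforms $A(\tau)$ in $\tau$, multiplies by $\chi$, and inverse-transforms in the shifted variable $z=\beta+\ii\tau$. A standard Paley--Wiener argument combined with Cauchy's theorem shows that the resulting $h_0(z)$ is entire with values in $\Psi_\cl^\mu(\R^d)$, lies in $\cC^\infty(\R_\beta;\Psi_\cl^\mu(\R^d;\R_\tau))$, and coincides with $A$ on any given line $\Gamma_\beta$ modulo $\cS(\R_\tau;\Psi^{-\infty}(\R^d))$. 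In particular $\sigma_\psi^\mu(h_0)=a$ and the parameter-dependent principal symbol of $h_0|_{\Gamma_\beta}$ equals $a$ for every $\beta\in\R$.

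For (ii), parameter-dependent ellipticity of $h_0|_{\Gamma_\beta}$ yields invertibility of $h_0(\beta+\ii\tau)$ for $|\tau|$ sufficiently large, uniformly as $\beta$ ranges over compact sets, by the fact recalled in Section~\ref{parameter-dep}. The bad set $D=\{z\in\C : h_0(z) \text{ not invertible}\}$ therefore intersects each vertical strip $\{\alpha\le\Re z\le\beta\}$ in a bounded subset. Producing, by the same kernel cut-off applied to a parameter-dependent parametrix of $A$, a companion $g_0\in\mathcal{M}^{-\mu}(\R^d)$ with $h_0 g_0 - \operatorname{I}$ and $g_0 h_0 - \operatorname{I}$ entire with values in $\Psi^{-\infty}(\R^d)$ and rapidly decreasing on each line, an application of the analytic Fredholm theorem to $z\mapsto h_0(z)g_0(z)$ shows that $D$ is in fact discrete in $\C$.

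I expect the main obstacle to be the removal of $D$. The approach is to construct a correction $r\in\mathcal{M}^{-\infty}(\R^d)$ such that $h:=h_0+r$ is everywhere invertible. Near each $z_0\in D$ one adds a finite-rank smoothing perturbation, built from a finite collection of rank-one operators of the form $u\mapsto\varphi\langle u,\psi\rangle$ with $\varphi,\psi\in\cS(\R^d)$ matching $\ker h_0(z_0)$ and $\operatorname{coker} h_0(z_0)$, multiplied by an entire $z$-dependent factor that concentrates near $z_0$; a Mittag--Leffler type patching then assembles these local corrections into a single $r\in\mathcal{M}^{-\infty}(\R^d)$, exploiting that only finitely many bad points lie in each vertical strip together with rapid decay in $|\Im z|$ to preserve membership in $\cS(\R_\tau;\Psi^{-\infty}(\R^d))$ on every line. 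Once $h$ has been so constructed, holomorphy of $z\mapsto h(z)^{-1}$ in $z$ follows automatically from the resolvent identity, while parameter-dependent regularity of order $-\mu$ on each line $\Gamma_\beta$ is obtained by combining the Neumann-series inverse of $h|_{\Gamma_\beta}$ for large $|\tau|$ with the finitely many local inversions for bounded $|\tau|$; this yields $h^{-1}\in\mathcal{M}^{-\mu}(\R^d)$ as required.
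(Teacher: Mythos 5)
The paper's proof is a one-line citation to Witt~\cite{Wit2002}, so there is no internal argument to compare against; your overall architecture (kernel cut-off to get a holomorphic extension $h_0$, then a finite-rank smoothing correction to remove the non-invertible set) is indeed the standard approach in the Schulze school and is almost certainly what that reference does. The first stage of your argument is sound: kernel cut-off does produce $h_0\in\mathcal M^\mu(\R^d)$ with $\sigma_\psi^\mu(h_0)=a$, parameter-dependent ellipticity does give invertibility of $h_0(z)$ for $|\Im z|$ large in each strip, and pairing $h_0$ with a kernel-cut-off parametrix $g_0$ puts you in the analytic-Fredholm setting (granted the implicit assumption, which the paper does not spell out for the base $\R^d$, that $\Psi^{-\infty}(\R^d)$-operators here have Schwartz kernels in $\cS(\R^{2d})$ and are therefore compact; on a non-compact base this is a genuine hypothesis on the calculus, not a freebie).

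The gap is in the removal step, which you yourself flag as "the main obstacle" but then treat too loosely. There is no entire scalar factor $\chi(z)$ in the Mellin symbol class that "concentrates near $z_0$" in the sense your Mittag--Leffler patching requires: membership in $\mathcal M^{-\infty}(\R^d)$ forces rapid decay of $\chi$ along each vertical line $\Gamma_\beta$ (i.e.\ as $|\Im z|\to\infty$), but entire functions with that decay (such as $\exp((z-z_0)^2)$) necessarily \emph{grow} in the horizontal direction; they are not small near other bad points with nearby imaginary part, so the local corrections interfere with one another. More fundamentally, adding a rank-one correction that restores invertibility at $z_0$ is itself an analytic perturbation and can create new zeros of $\det(h_0(z)g_0(z)+\text{correction})$ at nearby points; nothing in your sketch prevents the bad set from simply migrating instead of disappearing. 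The standard fix is either an iterative exhaustion argument that handles one bounded strip at a time with quantitative control on where new singularities can appear, or (as I believe Witt~\cite{Wit2002} actually does) a factorization or logarithmic argument showing that, modulo a single globally chosen smoothing perturbation, $h_0$ lies in the range of the exponential and is therefore everywhere invertible by construction. Either way, the step from ``$D$ is discrete'' to ``$D$ can be emptied by an $r\in\mathcal M^{-\infty}$'' requires a substantive argument that the Mittag--Leffler metaphor does not supply; as written, that is where your proof would fail.
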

\begin{proof}
This is proven as in \textsc{Witt} \cite{Wit2002}.
\end{proof}
  
\smallskip

The following result provides a means to control the action of
$\operatorname{op}_M(h)$ for $h\in \mathcal M^\mu(\R^d)$ on asymptotic
terms. Recall that $h^{(r)}(p)=\partial_z^r h(p) \in
\Psi_\cl^\mu(\R^d)$ for $p\in\C$.

\begin{prop}\label{important}
Let $h\in \mathcal M^\mu(\R^d)$, $(p,k)\in\C\times\N_0$, and $w\in
H^{s+\mu,\langle k\rangle}(\R^d)$ for some $s\in\R$. Then
\begin{equation}\label{impo}
  \varphi_0\operatorname{op}_M(h) \Gamma_{pk}w - \sum_{r=0}^k
  \frac1{r!}\,\Gamma_{p,k-r}\left[ h^{(r)}(p) w \right] \in
  \bigcap_{\epsilon>0} \cK^{s-\epsilon,1/2-\Re p +\epsilon}(\RR).
\end{equation}
\end{prop}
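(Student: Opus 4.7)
The plan is to prove the expansion by reducing $\Gamma_{pk}w$ to a tensor product and then shifting the Mellin contour past the pole at $z=p$. Since $w\in H^{s+\mu,\langle k\rangle}(\R^d)$, Lemma~\ref{Lem3.8} applied at regularity $s+\mu$ gives
\[
  \Gamma_{pk}w = T + R_1, \qquad T := \frac{(-1)^k}{k!}\,\varphi(x)x^{-p}\log^k\!x\cdot w(y),
\]
with $R_1 \in \bigcap_{\epsilon>0}\cK^{s+\mu-\epsilon,\,1/2-\Re p+\epsilon}(\RR)$. Because $h\in\mathcal M^\mu(\R^d)$ is holomorphic, $\operatorname{op}_M(h)$ acts continuously $\cK^{s+\mu,\gamma}(\RR)\to\cK^{s,\gamma}(\RR)$ independently of the conormal order $\gamma$, so $\varphi_0\operatorname{op}_M(h)R_1$ already lies in $\bigcap_{\epsilon>0}\cK^{s-\epsilon,1/2-\Re p+\epsilon}(\RR)$. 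It therefore suffices to analyze $\varphi_0\operatorname{op}_M(h)T$ modulo this space.

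The Mellin transform of $T$ factorizes as $\tilde T(z,y)=\tfrac{(-1)^k}{k!}m(z)w(y)$, where, as in the proof of Lemma~\ref{Lem3.5}, $m(z)$ is meromorphic on $\C$ with a single pole at $z=p$ of leading term $(-1)^k k!(z-p)^{-(k+1)}$, and $\chi m\in\cC^\infty(\R_\beta;\mathcal S(\R_\tau))$ away from the pole. Since $h$ is entire and its restriction to each line $\Gamma_\beta$ is parameter-dependent of order $\mu$, the rapid decay of $m$ in $\tau$ dominates the polynomial growth of $h$, and the defining contour of
\[
  \operatorname{op}_M(h)T(x,y) = \frac{(-1)^k}{2\pi\ii\,k!}\int_{\Gamma_{1/2-\delta}} x^{-z}\,h(y,z,D_y)\,m(z)\,w(y)\,\dd z
\]
may be shifted from $\Gamma_{1/2-\delta}$ down to $\Gamma_{\Re p-\epsilon}$ for any small $\epsilon>0$, crossing the pole at $z=p$. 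By Cauchy's theorem, $\operatorname{op}_M(h)T = T_{\textup{res}} + I_\epsilon$.

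A Leibniz-Taylor expansion $\partial_z^k[x^{-z}h(z)]\bigr|_{z=p}=\sum_{r=0}^k\binom{k}{r}(-\log x)^{k-r}x^{-p}h^{(r)}(p)$, combined with the leading singular term of $m(z)$, yields
\[
  T_{\textup{res}}(x,y) = \sum_{r=0}^k\frac{(-1)^{k-r}}{r!(k-r)!}\,x^{-p}\log^{k-r}\!x\cdot[h^{(r)}(p)w](y).
\]
Applying Lemma~\ref{Lem3.8} once more, now to $h^{(r)}(p)w\in H^{s,\langle k\rangle}(\R^d)\subset H^{s,\langle k-r\rangle}(\R^d)$ (using $h^{(r)}(p)\in\Psi_\cl^\mu(\R^d)$), each summand of $T_{\textup{res}}$ differs from $\tfrac{1}{r!}\Gamma_{p,k-r}[h^{(r)}(p)w]$ only by the replacement of the Fourier-cutoff form $\mathcal F^{-1}\{\varphi(x\ler)\widehat{h^{(r)}(p)w}\}$ by the tensor-product form $\varphi(x)\cdot h^{(r)}(p)w$---which is precisely the error controlled by that lemma. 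Since $(\varphi_0-\varphi)T_{\textup{res}}$ is supported in a compact set bounded away from $x=0$ and is locally in $H^s$, it belongs to $\cK^{s,\gamma}(\RR)$ for every $\gamma\in\R$. Summing over $r$ one concludes
\[
  \varphi_0 T_{\textup{res}} - \sum_{r=0}^k\frac{1}{r!}\Gamma_{p,k-r}\bigl[h^{(r)}(p)w\bigr]\in\bigcap_{\epsilon>0}\cK^{s-\epsilon,1/2-\Re p+\epsilon}(\RR).
\]

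The main obstacle is the estimate $\varphi_0 I_\epsilon\in\cK^{s-\epsilon,1/2-\Re p+\epsilon}(\RR)$. By the Mellin characterization of Lemma~\ref{Lem3.4} applied to the new weight line $\Gamma_{\Re p-\epsilon}$, this reduces to showing
\[
  \int_{\Gamma_{\Re p-\epsilon}}\bigl(\|h(z)w\|_{H^{s-\epsilon}(\R^d)}^2+\langle z\rangle^{2(s-\epsilon)}\|h(z)w\|_{L^2(\R^d)}^2\bigr)|m(z)|^2\,\dd z<\infty,
\]
which follows by combining the parameter-dependent bound $\|h(\beta+\ii\tau)w\|_{H^{r}}\lesssim\|w\|_{H^{r+\mu}}+\langle\tau\rangle^\mu\|w\|_{H^{r}}$ with the rapid decay of $m$ in $\tau$ and the inclusion $H^{s+\mu,\langle k\rangle}(\R^d)\subset H^{s+\mu-\epsilon}(\R^d)$. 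Assembling $\varphi_0\operatorname{op}_M(h)\Gamma_{pk}w=\varphi_0 T_{\textup{res}}+\varphi_0 I_\epsilon+\varphi_0\operatorname{op}_M(h)R_1$ delivers the claim; the residue computation is purely algebraic, the contour-shift estimate on $I_\epsilon$ is the only genuinely analytic step.
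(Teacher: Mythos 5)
Your proof is correct, and the argument is essentially parallel to the paper's, differing mainly in bookkeeping rather than mathematical content. You decompose $\Gamma_{pk}w$ as $T+R_1$ via Lemma~\ref{Lem3.8}, absorb $\varphi_0\operatorname{op}_M(h)R_1$ into the remainder using $\gamma$-independence of the action (holomorphy of $h$), and then handle $\operatorname{op}_M(h)T$ by a literal contour shift from $\Gamma_{1/2-\delta}$ to $\Gamma_{\Re p-\epsilon}$, with the crossed pole producing the residue $T_{\textup{res}}$; you then reconcile $T_{\textup{res}}$ with the $\Gamma_{p,k-r}$ terms via a second use of Lemma~\ref{Lem3.8}. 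The paper proceeds without an explicit contour shift: it Taylor-expands $h(z)$ to order $k$ at $z=p$ inside the Mellin integral and then invokes the algebraic identities $(z-p)^{k+1}m^{(k)}(z)\in\mathcal H(\C)\cap\cC^\infty(\R_\beta;\R_\tau)$ and $\tfrac{(-1)^k}{k!}(z-p)^r m^{(k)}(z)-\tfrac{(-1)^{k-r}}{(k-r)!}m^{(k-r)}(z)\in\mathcal H(\C)\cap\cC^\infty(\R_\beta;\R_\tau)$, which encode exactly the residue matching you compute by hand. In effect, the paper replaces the explicit residue-plus-remainder-integral split with the observation that holomorphic, rapidly decaying Mellin data produce the improved conormal order; this is the contour-shift argument in disguise, packaged so that Lemma~\ref{Lem3.4} is never invoked directly. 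Your version has the advantage of making the residue and the shifted-contour integral concrete and checkable; the paper's version is shorter and sidesteps the need to state the decay hypotheses justifying the shift. Both require $h\in\mathcal M^\mu(\R^d)$ (holomorphy plus parameter-dependent bounds) in an essential way, and both use Lemma~\ref{Lem3.8} twice in the same roles. One minor point: the inclusion $H^{s+\mu,\langle k\rangle}(\R^d)\subset H^{s+\mu-\epsilon}(\R^d)$ you quote is correct but unnecessarily weak here; for $k\geq 0$ one already has $H^{s+\mu,\langle k\rangle}\subset H^{s+\mu}$, which suffices for the $I_\epsilon$ estimate.
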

\begin{proof}
Let $m(z)$ be the Mellin transform of $\varphi(x) x^{-p}$. Then the
Mellin transform of $\varphi(x) x^{-p} \log^{k-r}\!x$ for $0\leq r\leq
k$ equals $m^{(k-r)}(z)$. Doing the computations modulo
$\bigcap_{\epsilon>0} \cK^{s-\epsilon,1/2-\Re p +\epsilon}(\RR)$,
we~find that (see Lemma~\ref{Lem3.8})
\[
\begin{aligned}
  \varphi_0\operatorname{op}_M(h) \Gamma_{pk}w &\equiv \frac{(-1)^k}{k!} \,
  \varphi_0\operatorname{op}_M(h) \left[\varphi(x)x^{-p} \log^k\! x
    \,w(y)\right] \\
  & = \frac{(-1)^k}{k!} \,\varphi_0 M^{-1} \left\{h(y,z,D_y) m^{(k)}(z) w(y)\right\}
  \\
  &\equiv \frac{(-1)^k}{k!} \, \varphi_0 M^{-1} \left\{ \sum\nolimits_{r=0}^k
  \frac1{r!}\,h^{(r)}(y,p,D_y) (z-p)^r m^{(k)}(z) w(y)\right\} \\
  & \equiv \varphi_0 \sum_{r=0}^k  \frac1{r!}\,\frac{(-1)^{k-r}}{(k-r)!}\, M^{-1} \left\{ 
  h^{(r)}(y,p,D_y) m^{(k-r)}(z) w(y)\right\} \\
  & \equiv \sum_{r=0}^k
  \frac1{r!}\,\Gamma_{p,k-r}\left[ h^{(r)}(p) w \right],
\end{aligned}
\]
where we have used that $ (z-p)^{k+1} m^{(k)}(z) \in \mathcal H(\C)
\cap \cC^\infty(\R_\beta;\R_\tau)$ and also that
\[
  \frac{(-1)^k}{k!} \, (z-p)^r m^{(k)}(z) - \frac{(-1)^{k-r}}{(k-r)!}
  \, m^{(k-r)}(z) \in \mathcal H(\C) \cap \cC^\infty(\R_\beta;\R_\tau)
\]
for $0\leq r\leq k$.
\end{proof}

%% ---------------------------------------------------------------------------------------

\subsubsection{Cone-degenerate pseudodifferential operators}

In order to introduce cone-degenerate pseudodifferential operators, we
choose cut-off functions $\varphi,\varphi_0,\varphi_1\in
\cC_\cc^\infty(\overline\R_+)$ that localize near $x=0$ and satisfy
$\varphi\varphi_0=\varphi$, $\varphi\varphi_1=\varphi_1$.
 
\begin{defn} \label{Def3.15} 
For $\mu\in\R$, the class $\Psi_\cc^\mu(\overline\R_+^{1+d})$ of
cone-degenerate pseudodifferential operators on $\R_+^{1+d}$ consists
of all pseudodifferential operators $A$ on $\RR$ which are of the form
\begin{equation}\label{cdpsdo}
  A =  A_M + A_\psi + A_{r},
\end{equation}
where
\begin{enumerate}[(i)]

\item  $A_M = \varphi \operatorname{op}_M(h) \varphi_0$ for some
  $h=h(x,z,y,D_y)\in \cC^\infty(\overline\R_+;\mathcal M^\mu(\R^d))$,

\item $A_\psi =
  (1-\varphi)\operatorname{op}_\psi(a)(1-\varphi_1)$ for some $a\in
  S_\cl^\mu(\overline\R_+^{1+d}\times\R^{1+d})$.

\item $A_{r}$ has integral kernel in $\mathscr
  S_0(\overline\R_+^{1+d})\hat{\otimes} \mathscr
  S_0(\overline\R_+^{1+d})$ (with respect to the measure
  $\psi^{-2\delta}(x)\,\dd x
  \dd y$ attached to the right factor).
\end{enumerate}
\end{defn}

Notice that any operator $A$ of the form in \eqref{cdpsdo} is a
pseudodifferential operator on $\RR$. The point of this definition is
to enforce control on the behavior as $x\to+0$ in a specific way (see,
e.g., the mapping properties in Proposition~\ref{prop3.20} below).

\begin{rem}
Although the weight factor $\psi^{-2\delta}(x)$ appears explicitly in
the definition of the residual class
$\Psi_\cc^{-\infty}(\overline\R_+^{1+d})$ consisting of the operators
$A_r$ in \eqref{cdpsdo}, the classes
$\Psi_\cc^\mu(\overline\R_+^{1+d})$ and
$\Psi_\cc^{-\infty}(\overline\R_+^{1+d})$ are in fact independent of
$\delta\in\R$.
\end{rem}

%% ---------------------------------------------------------------------------------------

\subsubsection{Symbolic structure}

For the rest of this section, we develop certain elements of the
calculus of pseudodifferential operators in
$\Psi_\cc^\infty(\overline\R_+^{1+d}) =
\bigcup_{\mu\in\R}\Psi_\cc^\mu(\overline\R_+^{1+d})$. We start with
the symbolic structure.

First note that the vector fields $a(x,y)x\partial_x +
\sum_{j=1}^d a_j(x,y)\partial_j$ tangent to $\partial\oRR$, where
$a,a_j\in \cC^\infty(\oRR)$,
%%form a Lie algebra under the commutator
%%$[\;,\,]$ as Lie bracket. Hence, they
are the $\cC^\infty$ sections of a vector bundle over $\oRR$ that we
denote by $\widetilde T^*\oRR$ and call it the compressed cotangent
bundle (see, e.g., \textsc{Melrose} \cite{Mel1993}). Indeed, the
covariable to $(x,y)\in\oRR$ in $\widetilde T^*\oRR$ can be taken to
be $(\tilde\xi,\eta)$ with $\tilde\xi=\psi(x)\xi$.

\begin{defn}
The compressed principle symbol $\tilde\sigma_\psi^\mu(A) \in
S^{(\mu)}(\widetilde T^\ast\overline\R_+^{1+d}\setminus0)$ of an operator
$A\in \Psi_\cc^\mu(\overline\R_+^{1+d})$ is defined as
\[
  \tilde \sigma_\psi^\mu(A)(x,y,\tilde\xi,\eta) = \varphi(x)\,
  \sigma_\psi^\mu(h)(x,y,\tau,\eta)\bigr|_{\tau=-\tilde\xi} +
  \left(1-\varphi(x)\right) \sigma_\psi^\mu(A_\psi)(x,y,\xi,\eta).
\]
Here, $\sigma_\psi^\mu(h)$ is the parameter-dependent principal symbol
of $h = h(x,y,z,D_y)$.
\end{defn}

\begin{prop}
The short sequence
\[
  0 \longrightarrow \Psi_\cc^{\mu-1}(\overline\R_+^{1+d})
  \longrightarrow \Psi_\cc^\mu(\overline\R_+^{1+d})
  \xrightarrow{\tilde\sigma_\psi^\mu} S^{(\mu)}(\widetilde
  T^\ast\overline\R_+^{1+d}\setminus0) \longrightarrow 0
\]
is split exact.
\end{prop}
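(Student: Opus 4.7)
The plan is to verify the three properties that a short exact sequence must have in order to be split: (a) injectivity of the left arrow, which is immediate from Definition~\ref{Def3.15}; (b) exactness at the middle term, i.e., $\ker\tilde\sigma_\psi^\mu = \Psi_\cc^{\mu-1}(\oRR)$; (c) surjectivity of $\tilde\sigma_\psi^\mu$ together with a continuous linear section.

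For (b), suppose $A = A_M + A_\psi + A_r \in \Psi_\cc^\mu(\oRR)$ satisfies $\tilde\sigma_\psi^\mu(A) = 0$. On $\supp\varphi$ this forces $\sigma_\psi^\mu(h)(x,y,\tau,\eta) \equiv 0$; since the Cauchy--Riemann equations make the parameter-dependent principal symbol of $h$ on any weight line $\Gamma_\beta$ independent of $\beta$ (see the remark following Definition~\ref{Def3.14}), this is equivalent to $h \in \cC^\infty(\overline\R_+;\mathcal M^{\mu-1}(\R^d))$, so $A_M \in \Psi_\cc^{\mu-1}(\oRR)$. Analogously, $\sigma_\psi^\mu(A_\psi)|_{\supp(1-\varphi)} = 0$ forces $a \in S_\cl^{\mu-1}$, so $A_\psi \in \Psi_\cc^{\mu-1}(\oRR)$. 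Combined with $A_r \in \Psi_\cc^{-\infty}(\oRR) \subset \Psi_\cc^{\mu-1}(\oRR)$, this gives $A \in \Psi_\cc^{\mu-1}(\oRR)$. The converse inclusion $\Psi_\cc^{\mu-1} \subseteq \ker\tilde\sigma_\psi^\mu$ is immediate from the definition.

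For (c), given $\tilde a \in S^{(\mu)}(\widetilde T^*\oRR \setminus 0)$, I would construct a preimage $A$ in the form \eqref{cdpsdo} as follows. Setting $a_0(x,y,\tau,\eta) := \tilde a(x,y,-\tau,\eta)$ yields a classical homogeneous symbol, and the task is to produce a holomorphic Mellin symbol $h \in \cC^\infty(\overline\R_+;\mathcal M^\mu(\R^d))$ with $\sigma_\psi^\mu(h)(x,y,\tau,\eta) = a_0(x,y,\tau,\eta)$ on every line $\Gamma_\beta$. This is supplied by the same mechanism underpinning Proposition~\ref{hol_inv}, with the ellipticity/invertibility requirement dropped. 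One then sets $A_M := \varphi\operatorname{op}_M(h)\varphi_0$; chooses any $a \in S_\cl^\mu(\oRR\times\R^{1+d})$ whose classical principal symbol equals $\tilde a(x,y,\psi(x)\xi,\eta)$, and sets $A_\psi := (1-\varphi)\operatorname{op}_\psi(a)(1-\varphi_1)$, $A_r := 0$. A direct computation from the definition of $\tilde\sigma_\psi^\mu$ then yields $\tilde\sigma_\psi^\mu(A_M + A_\psi) = \tilde a$, noting that on $\supp\varphi \cap \supp(1-\varphi)$ both prescriptions contribute consistently via the partition $\varphi + (1-\varphi) = 1$. Continuity of the section $\tilde a \mapsto A$ follows from continuity of the holomorphic Mellin quantization and the standard Fourier quantization in their respective symbol topologies.

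The main obstacle is the holomorphic Mellin quantization step in (c): given a parameter-dependent classical symbol, one must realize it as the restriction, to every line $\Gamma_\beta$, of the parameter-dependent principal symbol of a single entire function of $z = \beta + \ii\tau$ with values in $\Psi_\cl^\mu(\R^d)$. Naive analytic continuation in $\tau$ fails because Schwartz-type decay in $\tau$ does not persist under complex shifts of the spectral parameter; instead the construction proceeds via an excision of the symbol in frequency space together with a Schulze-type kernel cut-off applied in the complex Mellin parameter, as in \textsc{Witt}~\cite{Wit2002} and the cone and edge calculi of \textsc{Schulze}~\cite{Sch1998} and \textsc{Harutyunyan--Schulze}~\cite{HS2008}. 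Once this tool is invoked, the remaining verifications reduce to a direct check of principal symbols.
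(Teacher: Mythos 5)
The paper does not record its own proof of this proposition: it is explicitly outside the range of ``notable exceptions'' (Propositions~\ref{prop3.20}--\ref{Prop3.21}) for which proofs are given, the remainder being deferred to the cone-calculus literature (\textsc{Schulze}~\cite{Sch1998}, \textsc{Harutyunyan--Schulze}~\cite{HS2008}). Your overall plan---injectivity trivially, identify the kernel with $\Psi_\cc^{\mu-1}$, and produce a continuous linear right inverse via the holomorphic Mellin quantization (kernel cut-off) plus an ordinary Fourier quantization---is the correct and standard one, and part (c) is essentially right: the reduction to Proposition~\ref{hol_inv} with the invertibility dropped, the choice of $a$ with leading symbol $\tilde a(x,y,\psi(x)\xi,\eta)$, and the check of homogeneity and of the final identity $\tilde\sigma_\psi^\mu(A)=\tilde a$ all go through.

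There is, however, a genuine gap in (b). From $\tilde\sigma_\psi^\mu(A)=\varphi(x)\,\sigma_\psi^\mu(h)\bigr|_{\tau=-\tilde\xi}+(1-\varphi(x))\,\sigma_\psi^\mu(A_\psi)=0$ you conclude that ``on $\supp\varphi$ this forces $\sigma_\psi^\mu(h)\equiv0$.'' That inference only holds on the set $\{\varphi=1\}$; in the transition region $\{0<\varphi<1\}$ the Mellin and Fourier contributions can cancel against each other, so the individual principal symbols $\sigma_\psi^\mu(h)$ and $\sigma_\psi^\mu(a)$ need not vanish there even though $A$ does lie in $\Psi_\cc^{\mu-1}$. (Take any $h$ with $\sigma_\psi^\mu(h)\neq0$ supported in $x$ inside the overlap, and choose $a$ with $\sigma_\psi^\mu(a)=-\tfrac{\varphi}{1-\varphi}\sigma_\psi^\mu(h)\bigr|_{\tau=-\psi(x)\xi}$; then $\tilde\sigma_\psi^\mu(A_M+A_\psi)=0$ but $\sigma_\psi^\mu(h)\not\equiv0$ on $\supp\varphi$.) The missing step is a readjustment of the decomposition: from the vanishing on $\{\varphi=1\}$ one gets $\sigma_\psi^\mu(h)=0$ on a full neighbourhood of $x=0$; one then splits $h=h_0+h_1$ with $h_0$ supported near $x=0$ (hence of order $\mu-1$) and $h_1$ supported away from $x=0$, uses the Mellin--Fourier translation to move $\varphi\operatorname{op}_M(h_1)\varphi_0$ into the Fourier piece modulo a term in $\Psi_\cc^{-\infty}$, and observes that the resulting combined Fourier quantization has vanishing degree-$\mu$ symbol because $\tilde\sigma_\psi^\mu(A)=0$. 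Only after this rearrangement can one read off membership in $\Psi_\cc^{\mu-1}(\oRR)$ directly from Definition~\ref{Def3.15}. Without it, the argument identifies the wrong objects as being of lower order.
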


Consequently, the compressed principal symbol
$\tilde\sigma_\psi^\mu(A)$ provides control on operators in
$\Psi_\cc^\mu(\overline\R_+^{1+d})$ up to lower-order perturbations.

\smallskip

Still, control of the asymptotic behavior as $x\to+0$ is achieved with
the help of the full sequence
$\left(\sigma_\cc^{-j}(A)\right)_{j\in\N_0}$ of conormal symbols.

\smallskip

\begin{defn} \label{Def3.16}     
For $A\in \Psi_\cc^\mu(\overline\R_+^{1+d})$ written as in
\eqref{cdpsdo}, the conormal symbol $\sigma_\cc^{-j}(A)$ of conormal order $-j$ for
$j\in\N_0$ is defined as
\[
  \sigma_\cc^{-j}(A)(z) = \frac1{j!}\,\partial_x^j h(0, y, z, D_y) \in
  \mathcal M^\mu(\R^d).
\]
\end{defn}

\begin{lem}
There is a compatibility condition between $\tilde \sigma_\psi^\mu(A)$
and $\sigma_\cc^0(A)$, namely
\begin{equation}\label{ccond}
  \tilde \sigma_\psi^\mu(A)(0,y,\tilde\xi,\eta)=
  \sigma_\psi^\mu(\sigma_\cc^0(A))(y,\eta,\tau)\bigr|_{\tau=-\tilde\xi}.
\end{equation}
\end{lem}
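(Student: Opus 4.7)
The statement is essentially an unfolding of definitions, so I would present it as a short verification rather than a substantial argument. My plan is as follows.

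First I would evaluate the compressed principal symbol at $x=0$. Since $\varphi$ is a cutoff function with $\varphi(0)=1$, the formula for $\tilde\sigma_\psi^\mu(A)$ collapses to
\[
  \tilde\sigma_\psi^\mu(A)(0,y,\tilde\xi,\eta) = \sigma_\psi^\mu(h)(0,y,\tau,\eta)\bigr|_{\tau=-\tilde\xi},
\]
where $\sigma_\psi^\mu(h)$ denotes the parameter-dependent principal symbol of $h\bigr|_{\Gamma_\beta}\in \Psi_\cl^\mu(\R^d;\R_\tau)$ and $h=h(x,y,z,D_y)$ is the amplitude of the Mellin part $A_M$. Here I would invoke the remark, made right after Definition~\ref{Def3.14}, that this parameter-dependent principal symbol is independent of the choice of weight line $\Gamma_\beta$, as a consequence of the Cauchy--Riemann equations for the holomorphic family $z\mapsto h(x,y,z,D_y)\in \Psi_\cl^\mu(\R^d)$; this will let me identify it with the restriction of $\sigma_\psi^\mu(h)$ to $\tau=\Im z$ at an arbitrary real part.

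Next I would unwind the definition of the conormal symbol. By Definition~\ref{Def3.16},
\[
  \sigma_\cc^0(A)(z)=h(0,y,z,D_y)\in \mathcal M^\mu(\R^d),
\]
which is exactly the restriction of $h$ to $x=0$. Applying the preceding observation to this restricted Mellin symbol, its parameter-dependent principal symbol on any weight line is
\[
  \sigma_\psi^\mu\bigl(\sigma_\cc^0(A)\bigr)(y,\eta,\tau) = \sigma_\psi^\mu(h)(0,y,\tau,\eta).
\]
Comparing with the formula from the first step and substituting $\tau=-\tilde\xi$ yields \eqref{ccond}.

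There is no genuine obstacle here: once one knows that the parameter-dependent principal symbol of a holomorphic Mellin family is constant along weight lines (established already in Section~\ref{holom_Mellin}), the compatibility condition becomes a pure unraveling of the two definitions. The only mild subtlety worth flagging in the writeup is the sign convention relating the Mellin covariable $\tau=\Im z$ to the cotangent covariable $\tilde\xi$, which accounts for the substitution $\tau=-\tilde\xi$ and is built into the very definition of $\tilde\sigma_\psi^\mu(A)$.
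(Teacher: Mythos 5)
Your proposal is correct, and it matches the (implicit) argument the paper has in mind — the paper states this lemma without proof precisely because it unfolds directly from Definitions~\ref{Def3.15}, \ref{Def3.16} and the formula for $\tilde\sigma_\psi^\mu$, using $\varphi(0)=1$, $(1-\varphi)(0)=0$, $\sigma_\cc^0(A)=h(0,y,z,D_y)$, and the $\beta$-independence of the parameter-dependent principal symbol noted after Definition~\ref{Def3.14}. You also correctly flag the sign convention $\tau=-\tilde\xi$, which is the only place a careless reader could slip.
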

It is this compatibility condition which later will guarantee that the
governing equations for the coefficents $\gamma_{pk}u$ are
symmetrizable hyperbolic.

\begin{rem}
In order to provide a heuristic explanation of how control on the
asymptotic behavior as $x\to+0$ is achieved by the sequence
$\left(\sigma_\cc^{-j}(A)\right)_{j\in\N_0}$ notice that, informally,
we have that
\begin{equation}\label{heuristic}
  A \sim \sum_{j\geq0} x^j \operatorname{op}_M(\sigma_\cc^{-j}(A)(z))
  \quad \text{as $x\to +0$}
\end{equation}
upon performing a Taylor series expansion of $A_M$ at $x=0$. (See also
Proposition~\ref{ae3}.)
\end{rem}

\begin{ex}
The first-order differential operator
\[
   A = a(x,y)\psi(x) D_x + \sum_{j=1}^d a_j(x,y) D_j + b(x,y),
\]
where $a, a_j, b \in \cC^\infty_b(\overline\R_+^{1+d})$, belongs
to $\Psi_\cc^1(\oRR)$.  Then it is readily checked that
%%for $z=\beta+i\tau$ and $\tilde\xi=x \xi$,
\begin{enumerate}[(i)]
\item $\tilde \sigma_\psi^1(A)(x,y,\tilde\xi,\eta) = a(x,y)\tilde\xi +
  \sum_{j=1}^d a_j(x,y)\eta_j$.

\item $\sigma_\cc^0(A)(z) = \ii a(0,y) z + \sum_{j=1}^d a_j(0,y)D_j+b(0,y)$.

\item $\sigma_\psi^1(\sigma_\cc^0(A))(y,\eta,\tau) = -\,a(0,y)\tau +
  \sum_{j=1}^d a_j(0,y)\eta_j$.
\end{enumerate}
\end{ex}

%% ---------------------------------------------------------------------------------------

\subsubsection{Compositions and adjoints}

Now we get into the calculus of cone-degenerate pseudodifferential
operators. First consider their formal adjoints. Let $A^*$ be formal
adjoint operator of $A \in \Psi_\cc^\mu(\overline\R_+^{1+d})$, i.e.,
\[
  \langle Au,v \rangle = \langle u, A^*v\rangle, 
  \quad u,v\in\mathscr C_\cc^\infty(\R_+^{1+d}).
\]

\begin{prop} 
The class $\Psi_\cc^\infty(\overline\R_+^{1+d})$ of
cone-degenerate pseudodifferential operators is closed under taking
adjoints in the sense that whenever $A\in
\Psi_\cc^\mu(\overline\R_+^{1+d})$, then $A^* \in
\Psi_\cc^\mu(\overline\R_+^{1+d})$. Moreover,
\begin{enumerate}[{\rm (i)}]
\item $\tilde\sigma_\psi^\mu(A^*) = \tilde\sigma_\psi^\mu(A)^\ast$
\smallskip 

\item $\sigma_\cc^0(A^*)(z) = \sigma_\cc^0(A)(1-2\delta-\bar{z})^*$.
\end{enumerate}
\end{prop}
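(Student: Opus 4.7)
The plan is to decompose $A = A_M + A_\psi + A_r$ as in \eqref{cdpsdo} and to verify that the formal adjoint (with respect to $\langle\;,\,\rangle$ on $\cK^{0,\delta}(\RR)$) of each summand again lies in $\Psi_\cc^\mu(\oRR)$. The Fourier part $A_\psi = (1-\varphi)\operatorname{op}_\psi(a)(1-\varphi_1)$ is cut off away from $x=0$, where the weight $x^{-2\delta}$ appearing in the inner product is smooth and strictly positive; hence its adjoint is a classical PsDO whose principal symbol is the conjugate transpose of $a$, with cutoffs appearing in reversed order. Using the cutoff relations and inserting an intermediate cutoff, one rewrites the result in the standard form of $\Psi_\cc^\mu(\oRR)$ plus an off-diagonal kernel absorbed into the residual part. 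The residual class $\Psi_\cc^{-\infty}(\oRR)$ is plainly closed under adjunction, since its defining kernel condition is symmetric in the two factors.

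The main step is the Mellin part $A_M = \varphi\operatorname{op}_M(h)\varphi_0$ with $h \in \cC^\infty(\overline\R_+;\mathcal M^\mu(\R^d))$. With $\beta = 1/2-\delta$, Mellin--Parseval on the weight line $\Gamma_\beta$ yields, suppressing the $x$-dependence of $h$ momentarily,
\[
  \langle \operatorname{op}_M(h)u, v\rangle = \f1{2\pi}\int_{-\infty}^\infty \langle h(\beta+\ii\tau)\tilde u(\beta+\ii\tau,\cdot), \tilde v(\beta+\ii\tau,\cdot)\rangle_{L^2(\R^d)}\,\dd\tau.
\]
Moving the pointwise $L^2(\R^d)$-adjoint of $h(\beta+\ii\tau)$ onto the second factor shows that the adjoint operator has Mellin symbol $z \mapsto h(z)^*$ on $\Gamma_\beta$. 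Since $\bar z = 1-2\delta-z$ there, the assignment
\[
   h^*(z) := h(1-2\delta-\bar z)^*
\]
agrees with $h(z)^*$ on $\Gamma_{1/2-\delta}$ and extends entirely in $z$: the map $z\mapsto 1-2\delta-\bar z$ is anti-holomorphic, the pointwise operator adjoint is conjugate-linear and hence reverses holomorphic type, so the composition is holomorphic. Combined with the classical fact that adjoints of parameter-dependent PsDOs remain in the same class, this yields $h^* \in \mathcal M^\mu(\R^d)$.

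The $x$-dependence of $h$ and the cutoffs are accommodated by Taylor expansion of $h(x,\cdot)$ at $x=0$ together with the standard Mellin asymptotic-summation procedure: the adjoint of $\varphi\operatorname{op}_M(h)\varphi_0$ becomes $\varphi_0\operatorname{op}_M(h^\#)\varphi$ modulo $\Psi_\cc^{-\infty}(\oRR)$, where $h^\#(x,y,z,D_y)$ has an asymptotic expansion whose zeroth-order term at $x=0$ equals the $h^*$ constructed above. Using $\varphi\varphi_0 = \varphi$ and rapid off-diagonal decay of Mellin kernels, this is further rewritten as $\varphi\operatorname{op}_M(h^\#)\varphi_0$ modulo another residual. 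Identity (i) then follows because both constituents of $\tilde\sigma_\psi^\mu$ (the Fourier principal symbol of $a$ and $\sigma_\psi^\mu(h)$ via $\tau=-\tilde\xi$) transform by pointwise matrix adjoint under passage to the adjoint operator; identity (ii) is the specialization of $\sigma_\cc^{-j}(A^*)(z) = \f1{j!}\p_x^j h^\#(0,y,z,D_y)$ to $j=0$, which yields $h(0,y,1-2\delta-\bar z, D_y)^* = \sigma_\cc^0(A)(1-2\delta-\bar z)^*$.

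The principal obstacle is verifying rigorously that $z \mapsto h(1-2\delta-\bar z)^*$ indeed defines an element of $\mathcal M^\mu(\R^d)$: entire holomorphy in $z$ follows from the composition argument above, but one must additionally check the parameter-dependent PsDO structure on each vertical line $\Gamma_\beta$ together with smooth dependence on $\beta$. This amounts to tracking how the standard formal-adjoint asymptotic expansion for $\Psi_\cl^\mu(\R^d;\R_\tau)$ interacts with the Mellin shift $z \mapsto 1-2\delta-\bar z$; once that is in place, the remaining steps are routine adaptations of the cone calculus found in Schulze \cite{Sch1991, Sch1998} and Harutyunyan--Schulze \cite{HS2008}.
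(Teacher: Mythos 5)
The paper does not actually prove this proposition: it is one of the results explicitly deferred to the literature (Schulze, Harutyunyan--Schulze), since the stated exceptions in Section~\ref{CDPSDO} are only Propositions~\ref{prop3.20}--\ref{Prop3.21}. Your reconstruction is sound and is essentially the standard cone-calculus argument those references employ: decomposing $A = A_M + A_\psi + A_r$, using Mellin--Parseval on $\Gamma_{1/2-\delta}$ to obtain the adjoint Mellin symbol $h^\#(z)=h(1-2\delta-\bar z)^*$, checking via the Cauchy--Riemann composition argument that $h^\#\in\mathcal M^\mu(\R^d)$, handling the $x$-dependence by Taylor expansion plus asymptotic summation, and reading off (i) from the matrix adjoint of each principal-symbol constituent and (ii) from the $j=0$ conormal symbol of $h^\#$. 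Two small precisions worth recording when you write this up: near $x=0$ the inner-product weight is $\psi(x)^{-2\delta}$, not $x^{-2\delta}$, and these agree only for $x\le 1/2$ --- the mismatch on the transition region produces a smooth conjugation that is absorbed into lower-order/residual terms; and the rearrangement $\varphi_0\operatorname{op}_M(h^\#)\varphi\rightsquigarrow\varphi\operatorname{op}_M(h^\#)\varphi_0$ uses that $\varphi\varphi_0=\varphi$ together with the off-diagonal rapid decay of the Mellin kernel, producing a term in $\Psi_\cc^{-\infty}(\oRR)$. The "principal obstacle" you flag --- smooth $\beta$-dependence of $h^\#|_{\Gamma_\beta}$ as a family in $\Psi_\cl^\mu(\R^d;\R_\tau)$ --- is indeed the only point requiring care, but it reduces to the observation that on each line the map is just the reflection $\beta\mapsto 1-2\delta-\beta$ followed by the parameter-dependent formal adjoint, both of which respect the class and vary smoothly.
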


Another result is that cone-degenerate pseudodifferential operators are closed under compositions.

\begin{prop}
The class $\Psi_\cc^\infty(\overline\R_+^{1+d})$ of cone-degenerate
pseudodifferential operators is closed under compositions in the
sense that whenever $A\in\Psi_\cc^\mu(\overline\R_+^{1+d})$ and $B\in
\Psi_\cc^\nu(\overline\R_+^{1+d})$, then their composition $A\circ B$
belongs to $ \Psi_\cc^{\mu+\nu}(\overline\R_+^{1+d})$. Moreover,
\begin{enumerate}[{\rm (i)}]
\item $\tilde\sigma_\psi^{\mu+\nu}(A\circ B)= \tilde\sigma_\psi^\mu(A) \,
  \tilde\sigma_\psi^\nu(B)$,
\smallskip 

\item $\sigma_\cc^{-\ell}(A\circ B)(z) = \sum_{j+k=\ell}
  \sigma_\cc^{-j}(A)( z-k) \, \sigma_\cc^{-k}(B)(z)$ \/ for
  $\ell\in\N_0$.
\end{enumerate}
\end{prop}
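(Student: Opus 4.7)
The plan is to decompose both operators according to \eqref{cdpsdo}, writing $A = A_M + A_\psi + A_r$ and $B = B_M + B_\psi + B_r$, so that $A \circ B$ splits into nine pieces. I expect only the two diagonal pieces $A_M \circ B_M$ and $A_\psi \circ B_\psi$ to contribute to the principal part in a substantial way, while all six cross terms together with the contributions from the residual pieces land in $\Psi_\cc^{-\infty}(\oRR)$.

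To dispose of the cross terms, observe that $A_M$ carries $\varphi_0$ on the right while $B_\psi$ carries $1-\varphi$ on the left, and the product $\varphi_0(1-\varphi)$ has support bounded away from $x = 0$. By pseudolocality of both the Mellin calculus of Section~\ref{holom_Mellin} and the usual Fourier calculus, the composition has a smooth Schwartz kernel that satisfies the rapid decay requirements of Definition~\ref{Def3.15}(iii) in both factors of $\oRR$; the weight $\psi^{-2\delta}(x)$ attached to the right factor plays no role here since one is working at positive distance from $x=0$. The symmetric cross term $A_\psi \circ B_M$ is handled identically, and any term involving $A_r$ or $B_r$ stays residual by direct estimates based on the mapping properties in Proposition~\ref{prop3.20}. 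For $A_\psi \circ B_\psi$, the ordinary Kohn--Nirenberg composition in $S^{\mu+\nu}_\cl(\oRR \times \R^{1+d})$ produces the Fourier amplitude, and the outer cutoffs are rearranged using $\varphi\varphi_0 = \varphi$ and $\varphi\varphi_1 = \varphi_1$ so that the result matches Definition~\ref{Def3.15}(ii).

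The substantive step is $A_M \circ B_M = \varphi \operatorname{op}_M(h_A) \varphi_0 \varphi \operatorname{op}_M(h_B) \varphi_0$. Using $\varphi_0 \varphi = \varphi$ and splitting the inner factor as $\varphi = 1 - (1-\varphi)$ (the $(1-\varphi)$ piece contributing to the residual class by pseudolocality), the computation reduces to $\varphi \operatorname{op}_M(h_A)\operatorname{op}_M(h_B) \varphi_0$. For the Mellin composition I propose to use the shift identity
\[
  \operatorname{op}_M(f(z))\,x^k \;=\; x^k \operatorname{op}_M(f(z-k)), \qquad f \in \mathcal M^\mu(\R^d),\; k \in \N_0,
\]
which holds by holomorphy and contour deformation, together with the Taylor expansions $h_A(x,z,y,D_y) \sim \sum_j x^j \sigma_\cc^{-j}(A)(z)$ and $h_B(x,z,y,D_y) \sim \sum_k x^k \sigma_\cc^{-k}(B)(z)$ at $x = 0$. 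Combining these and regrouping in powers of $x$ yields
\[
  \operatorname{op}_M(h_A)\operatorname{op}_M(h_B) \;\sim\; \sum_{\ell \geq 0} x^\ell \operatorname{op}_M\!\Bigl(\sum_{j+k=\ell} \sigma_\cc^{-j}(A)(z-k)\,\sigma_\cc^{-k}(B)(z)\Bigr),
\]
which simultaneously identifies the composed Mellin amplitude modulo a Taylor remainder of arbitrarily high flatness in $x$ and delivers the conormal composition formula~(ii). The compressed principal symbol identity~(i) then follows from the multiplicativity of the parameter-dependent principal symbol of Mellin amplitudes (which is constant in $\beta$ by the remark after Definition~\ref{Def3.14}) combined with the standard multiplicativity on the Fourier side, glued along the compatibility $\tau = -\tilde\xi$ in the definition of $\tilde\sigma_\psi^\mu$ and the matching relation~\eqref{ccond}.

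The main obstacle I anticipate is the rigorous justification of the Mellin Leibniz expansion: one has to estimate the Taylor remainders of $h_A$ (and of its shifts $z \mapsto z-k$) uniformly in $\Re z$, verify that the composed amplitude indeed lies in $\cC^\infty(\overline\R_+; \mathcal M^{\mu+\nu}(\R^d))$ in the sense of Definition~\ref{Def3.14}, and control the contour deformations underlying the shift identity. All of these steps are carried out in the cone calculus of \cite{Sch1998,HS2008}; what remains is to check that they transfer to the present non-compact base $\R^d$, which by the setup in Section~\ref{parameter-dep} is routine since parameter-dependent ellipticity and parametrices are not invoked here.
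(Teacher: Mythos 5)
Your plan has the right skeleton — decompose according to \eqref{cdpsdo}, compose piece by piece, derive the conormal symbol formula from the Mellin shift identity $\operatorname{op}_M(f(z))\,x^k = x^k\operatorname{op}_M(f(z-k))$ together with Taylor expansion of the Mellin amplitudes at $x=0$, and obtain (i) from symbol multiplicativity and the compatibility relation~\eqref{ccond}. That part of the proposal is sound and correctly reproduces the Mellin--Leibniz rule stated in~(ii).

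However, the treatment of the cross terms contains a genuine gap. You assert that $A_M\circ B_\psi$, $A_\psi\circ B_M$, and the piece $\varphi\operatorname{op}_M(h_A)(1-\varphi)\operatorname{op}_M(h_B)\varphi_0$ land in $\Psi_\cc^{-\infty}(\oRR)$ "by pseudolocality," because the inner cutoff $\varphi_0(1-\varphi)$ is supported away from $x=0$. But pseudolocality gives smoothness of the Schwartz kernel only off the diagonal, and the outer cutoff $\varphi$ of $A_M$ does not have support disjoint from $\varphi_0(1-\varphi)$: both are nonzero on the transition region $\{0<\varphi<1\}$. Consequently the composed kernel retains a diagonal singularity of order $\mu+\nu$ there, and the operator is not smoothing — exactly as $D_x\,\chi\,D_x$ with $\chi\in\cC_\cc^\infty$, $\chi\not\equiv0$, is an order-two operator, not a residual one. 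What is true, and what actually drives the argument, is twofold: by holomorphy of $h_A$ and contour deformation the kernel of these cross terms vanishes to infinite order at $x=0$ (so they contribute nothing to the conormal symbols, which is why (ii) still holds), and away from $x=0$ the Mellin quantization $\operatorname{op}_M(h)$ agrees modulo $\Psi_\cc^{-\infty}$ with a classical Fourier quantization, so the cross terms should be absorbed into the $A_\psi'$-piece of $A\circ B$ — not into $A_r'$. Repairing your argument therefore requires the standard Mellin-to-Fourier change-of-quantization lemma and the Mellin cut-off lemma (for cutoffs with genuinely disjoint supports), both of which can be found in \cite{Sch1998,HS2008} and neither of which reduces to pseudolocality alone. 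The terms involving $A_r$ or $B_r$ are handled correctly, since $\Psi_\cc^\mu(\oRR)$ preserves $\cS_0(\oRR)$, so those compositions retain $\cS_0\hat\otimes\cS_0$ kernels.
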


%% ---------------------------------------------------------------------------------------

\subsubsection{Mapping properties}

Cone-degenerate pseudodifferential operators act continuously in the
scale of Sobolev spaces with asymptotics. To see this, we start with
the following result:

\begin{lem}
For each $\mu\in\R$, $\Psi_\cc^\mu(\overline\R_+^{1+d})\subset
\bigcap_P\mathcal L(\mathscr S_P(\overline\R_+^{1+d}))$, where the
intersection is over all asymptotic types $P$.
\end{lem}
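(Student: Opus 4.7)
The plan is to fix an arbitrary asymptotic type $P$ and decompose $A = A_M + A_\psi + A_r$ as in Definition~\ref{Def3.15}, reducing the claim to continuity of each summand on $\cS_P(\oRR)$. Both $A_r$ and $A_\psi$ will turn out to land in $\cS_0(\oRR) \subset \cS_P(\oRR)$, so the entire asymptotic content is carried by $A_M$, and Proposition~\ref{important} is precisely the tool that controls $A_M$ on the asymptotic terms.

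For $A_r$, its integral kernel lies in $\cS_0(\oRR)\hat\otimes\cS_0(\oRR)$, so, thinking of the left factor as the output, $A_r u\in\cS_0(\oRR)$ for any $u$ on which the pairing in the second variable makes sense, in particular for $u\in\cS_P(\oRR)$, with automatic continuity. For $A_\psi = (1-\varphi)\operatorname{op}_\psi(a)(1-\varphi_1)$, the cutoff $(1-\varphi_1)$ annihilates $u$ near $x=0$, so $(1-\varphi_1)u\in\cS_0(\oRR)$ depends continuously on $u\in\cS_P(\oRR)$; the classical operator $\operatorname{op}_\psi(a)$ preserves $\cS(\R^{1+d})$; and the outer $(1-\varphi)$ forces the output to vanish near $x=0$, landing $A_\psi u$ in $\cS_0(\oRR)$.

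The substantive step is $A_M = \varphi\operatorname{op}_M(h)\varphi_0$. Given $u\in\cS_P(\oRR)$ written as in \eqref{defnsp}, I would first invoke the elementary relation used at the end of the proof of Lemma~\ref{dense}, namely that $\Gamma_{pk}w - \frac{(-1)^k}{k!}\varphi(x)x^{-p}\log^k\!x\,w(y)\in\cS_0(\oRR)$ for $w\in\cS(\R^d)$, in order to re-express $u$ modulo $\cS_0(\oRR)$ as a convergent sum $\sum_{(p,k)\in P}\Gamma_{pk}u_{pk}$. Next, Taylor-expanding $h(x,y,z,D_y)$ in $x$ at $x=0$ gives $h = \sum_{j=0}^{N} x^j\sigma_\cc^{-j}(A)(z) + x^{N+1}r_N(x,y,z,D_y)$, and the trivial identity $x^j\Gamma_{pk}w = \Gamma_{p-j,k}w$ (immediate from Definition~\ref{Def3.9}) combined with Proposition~\ref{important} applied to each holomorphic Mellin symbol $\sigma_\cc^{-j}(A)\in\mathcal M^\mu(\R^d)$ yields, modulo $\cS_0(\oRR)$,
\[
  A_M\Gamma_{pk}u_{pk} \equiv \sum_{j\geq 0}\sum_{r=0}^{k}\frac{1}{r!}\,\Gamma_{p-j,k-r}\bigl[\partial_z^r\sigma_\cc^{-j}(A)(p)\,u_{pk}\bigr].
\]
All new index pairs $(p-j,k-r)$ belong to $P$ by properties (iii)--(iv) of Definition~\ref{Def3.1}, and the coefficients lie in $\cS(\R^d)$ because each $\partial_z^r\sigma_\cc^{-j}(A)(p)\in\Psi_\cl^\mu(\R^d)$ preserves $\cS(\R^d)$.

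The main obstacle is not any single one of these steps but the bookkeeping needed to assemble the doubly infinite family of new asymptotic contributions into an honest element of $\cS_P(\oRR)$. The Taylor remainder $x^{N+1}\varphi\operatorname{op}_M(r_N)\varphi_0$ can be absorbed into $\cS_0(\oRR)$ modulo arbitrarily flat errors by taking $N$ large, exploiting that the factor $x^{N+1}$ improves the conormal order by $N+1$. For the outer sum over $(p,k)\in P$ together with the shifts $j$, I would carry out a Borel-type summation, choosing fresh truncation constants (in the spirit of the $c_p$ in the definition of $\cS_P$) that grow fast enough in $|p|+j$ to force absolute convergence in $\cS_P(\oRR)$. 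Continuity in the Fr\'echet topology of $\cS_P(\oRR)$ is then automatic, since every step above depends continuously on its data.
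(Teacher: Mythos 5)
The paper disposes of this lemma with a one-sentence citation (``a standard result in the theory of cone-degenerate pseudodifferential operators... it relies on the fact that the conormal symbols are assumed to be holomorphic''), so there is no written proof to compare against in detail; your sketch is an honest unpacking of that standard argument, and its overall structure is correct: isolate $A_\psi$ and $A_r$ as producing flat output, Taylor-expand $h$ in $x$, shift indices via $x^j\Gamma_{pk}=\Gamma_{p-j,k}$, and feed each Taylor coefficient to Proposition~\ref{important}. The index bookkeeping ($(p-j,k-r)\in P$ by Definition~\ref{Def3.1}(iii)--(iv), coefficients in $\cS(\R^d)$ because $\partial_z^r\sigma_\cc^{-j}(A)(p)\in\Psi_\cl^\mu(\R^d)$) is right, and your reconstructed formula for $A_M\Gamma_{pk}u_{pk}$ agrees with \eqref{gamma_au}.

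One step deserves more care than you give it. You assert ``modulo $\cS_0(\oRR)$'' after applying Proposition~\ref{important}, but that proposition only puts the error in $\bigcap_{\epsilon>0}\cK^{s-\epsilon,1/2-\Re p+\epsilon}(\RR)$ — an infinitesimal gain in conormal order, far from the infinite-order flatness that $\cS_0(\oRR)$ demands. What rescues the argument is precisely the point the paper flags: because $h\in\mathcal M^\mu(\R^d)$ is holomorphic, the factor $h(z)-\sum_{r\leq k}\frac1{r!}h^{(r)}(p)(z-p)^r$ kills the pole of $m^{(k)}(z)$ at $z=p$, so the inverse Mellin integral representing the error can be pushed to $\Gamma_\beta$ for any $\beta\ll0$; combined with the Schwartz class of $w$ and the compact $x$-support coming from the cutoffs, this places the error in $\cS_0(\oRR)$ directly, with no need to iterate the Taylor expansion to improve the conormal order one unit at a time. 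You should make this contour-shift explicit rather than appealing to ``taking $N$ large'' for the remainder alone, since a priori the error from Proposition~\ref{important} at fixed $j$ does not have arbitrarily large conormal order. With that adjustment, and after checking that the $A_\psi$- and $A_r$-pieces inherit Schwartz decay at $x\to\infty$ and in $y$ (the kernel condition on $A_r$ and the classicality of $a$ give this), your sketch is a correct and more informative proof than the paper's citation.
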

\begin{proof}
This is a standard result in the theory of cone-degenerate
pseudodifferential operators. In particular, it relies on the fact
that the conormal symbols are assumed to be holomorphic.
\end{proof}
  
\begin{prop} \label{prop3.20}
For $\mu \in \R$, 
\[
  \Psi_\cc^\mu(\overline\R_+^{1+d})\subset \bigcap_{s,P,\theta}
  \mathcal L\bigl(H_{P,\theta}^{s+\mu,\delta}(\overline\R_+^{1+d}),
  H_{P,\theta}^{s,\delta}(\overline\R_+^{1+d})\bigr).
\]
where intersection is over all $s\in\R$, %%$\delta\in\R$,
$P\in\underline{\operatorname{As}}^\delta$, and $\theta\geq0$.

\end{prop}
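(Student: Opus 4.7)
The plan is to decompose $A\in\Psi_\cc^\mu(\oRR)$ according to \eqref{cdpsdo} as $A=A_M+A_\psi+A_r$ and establish the mapping property for each piece separately. The residual operator $A_r$ has kernel in $\cS_0(\oRR)\hat\otimes\cS_0(\oRR)$, hence maps any tempered distribution into $\cS_0(\oRR)\subset H_{\mathcal O,\theta}^{s,\delta}(\oRR)\subset H_{P,\theta}^{s,\delta}(\oRR)$. The classical piece $A_\psi=(1-\varphi)\operatorname{op}_\psi(a)(1-\varphi_1)$ is a standard $\Psi_\cl^\mu$-operator sandwiched between cutoffs vanishing near $x=0$; applied to $v\in H_{P,\theta}^{s+\mu,\delta}(\oRR)$ it outputs a function supported in $\supp(1-\varphi)$ that is $H^s$ in the usual sense and therefore again belongs to $H_{\mathcal O,\theta}^{s,\delta}(\oRR)$. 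The substantial work is thus concentrated in the Mellin piece $A_M=\varphi\,\operatorname{op}_M(h)\,\varphi_0$.

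As a preparatory step I would verify that $A\colon\cK^{s+\mu,\gamma}(\RR)\to\cK^{s,\gamma}(\RR)$ is bounded for every $s,\gamma\in\R$. Via the Mellin-transform characterization in Lemma~\ref{Lem3.4}, boundedness of $A_M$ reduces to uniform operator-norm bounds for the family $\tau\mapsto h(x,y,1/2-\gamma+\ii\tau,D_y)$ in $\Psi_\cl^\mu(\R^d;\R_\tau)$, which is immediate from Definitions~\ref{Def3.12} and~\ref{Def3.14}: holomorphy of $h$ in $z$ together with smoothness in $\Re z$ yields the needed bounds on any fixed weight line. This settles the case $P=\mathcal O$ at every conormal order and in particular covers the flat remainder in the asymptotic decomposition.

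For the asymptotic part, assume first that $\pi_\C P\cap\Gamma_{1/2-\delta-\theta}=\emptyset$ and decompose $v\in H_{P,\theta}^{s+\mu,\delta}(\oRR)$ in line with \eqref{decomp} as
\[
v=\sum_{\substack{(p,k)\in P\\ \Re p>1/2-\delta-\theta}}\Gamma_{pk}v_{pk}\;+\;v_0,\qquad v_0\in\cK^{s+\mu-\theta,\delta+\theta}(\RR).
\]
By the preparatory step, $Av_0\in\cK^{s-\theta,\delta+\theta}(\RR)$ will serve as the flat remainder of $Av$. To each term $\Gamma_{pk}v_{pk}$ I would apply Proposition~\ref{important}, obtaining
\[
A_M\Gamma_{pk}v_{pk}-\sum_{r=0}^{k}\tfrac{1}{r!}\,\Gamma_{p,k-r}\!\left[\partial_z^r\sigma_\cc^0(A)(p)\,v_{pk}\right]\in\bigcap_{\ve>0}\cK^{s-\ve,1/2-\Re p+\ve}(\RR).
\]
The extracted pairs $(p,k-r)$ lie in $P$ by property~(iii) of Definition~\ref{Def3.1}, and $\partial_z^r\sigma_\cc^0(A)(p)\in\Psi_\cl^\mu(\R^d)$ carries $v_{pk}$ into $H^{s+\Re p+\delta-1/2,\langle k-r\rangle}(\R^d)$, which is precisely the domain required to feed into $\Gamma_{p,k-r}$ in the target space. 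I would then iterate: when the resulting remainder is still of conormal order below $\delta+\theta$, I peel off further asymptotic terms coming from the higher conormal symbols $\sigma_\cc^{-1}(A),\sigma_\cc^{-2}(A),\dots$ through the precise quantitative version of \eqref{heuristic}, each contributing at singular exponent $p-j$, which remains in $P$ by property~(iv). Only finitely many $j$ satisfy $\Re(p-j)>1/2-\delta-\theta$, so the procedure terminates and yields the desired decomposition of $Av$ in $H_{P,\theta}^{s,\delta}(\oRR)$.

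The remaining case $\pi_\C P\cap\Gamma_{1/2-\delta-\theta}\neq\emptyset$ is handled by complex interpolation in $\theta$, which is already built into Definition~\ref{Def3.10}. The main obstacle I anticipate is making \eqref{heuristic} quantitative: one must Taylor-expand the amplitude $h(x,\cdot)$ at $x=0$, recognize each coefficient as $\sigma_\cc^{-j}(A)$ up to combinatorial factors, and show that the $j$-th remainder yields an operator mapping $\cK^{s+\mu,\gamma}$ into $\cK^{s,\gamma-j}$ so that the peeling-off actually gains a full unit of conormal order at each step. Once this is in place, the continuity constants of the claimed embedding follow from the Fr\'echet-space seminorms of $h$, $a$, and the kernel of $A_r$, completing the proof.
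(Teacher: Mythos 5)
Your proposal follows essentially the same route as the paper: reduce to the Mellin piece acting on a single asymptotic term $\Gamma_{pk}w$, apply Proposition~\ref{important}, and use complex interpolation for the exceptional weights. The step you flag as ``the main obstacle I anticipate''---making \eqref{heuristic} quantitative via a Taylor expansion of $h(x,z)$ at $x=0$---is precisely what the paper's proof carries out, writing $h(x,z)=\sum_{0\le j<\kappa}x^j h_j(z)+x^\kappa h'(x,z)$, applying Proposition~\ref{important} to each ($x$-independent) $h_j$, and using the factor $x^j$ to raise the conormal order of each remainder until the $x^\kappa$ piece is already flat enough. One small imprecision: your displayed estimate replacing $h$ by $\sigma_\cc^0(A)(p)=h(0,p)$ claims a remainder in $\bigcap_{\ve>0}\cK^{s-\ve,1/2-\Re p+\ve}$, but for $x$-dependent $h$ this is not true and also leaves nothing to ``iterate'' on; the Taylor expansion has to come first, and then Proposition~\ref{important} is applied term by term, which is what you describe in your final paragraph.
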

\begin{proof}
It is well-known that $\Psi_\cc^\mu(\overline\R_+^{1+d})\subset
\bigcap_{s,\gamma} \cL\bigl(\cK^{s+\mu,\gamma}(\RR),
\cK^{s,\gamma}(\RR)\bigr)$. That this holds for all $\gamma\in\R$ is
again a consequence of the holomorphy of the conormal symbols.

By the closed graph theorem and complex interpolation, it is enough to
show that
\begin{equation}\label{paula}
  \varphi_0 \operatorname{op}_M(h)\Gamma_{pk}w \in
  H_{P,\theta}^{s,\delta}(\overline\R_+^{1+d})
\end{equation}
whenever $(p,k)\in P$, $w\in H^{s+\mu+\Re p+\delta-1/2,\langle
  k\rangle}(\R^d)$, $1/2-\delta-\theta < \Re p$, $\Re
p-(1/2-\delta-\theta) \notin\N$, and $h\in
\cC^\infty(\overline\R_+;\mathcal M^\mu(\R^d))$, where $\Gamma_{pk}w$
is given in~\eqref{nmu}. Let $\kappa$ be the smallest integer such
that $\Re p-\kappa<1/2-\delta-\theta$. Writing $h(x,z) = \sum_{0\leq
  j<\kappa} x^j h_j(z) + x^\kappa h'(x,z)$, where $h_j\in\mathcal
M^\mu(\R^d)$ for $0\leq j<\kappa$ and
$h'\in\cC^\infty(\overline\R_+;\mathcal M^\mu(\R^d))$, one has
\[
  \varphi_0(x) x^j \operatorname{op}_M(h_j) \Gamma_{pk}w =
  \sum_{r=0}^k \frac1{r!}\,\Gamma_{p-j,k-r}\bigl[h_j^{(r)}(p)w\bigr] + v_j,
\]
where $v_j\in \cK^{s-\theta+j,\delta+\theta}(\RR)$, and
\[
  \varphi_0(x) x^\kappa \operatorname{op}_M(h') \Gamma_{pk}w \in
  \cK^{s-\theta+\kappa,\delta+\theta}(\RR).
\]
Altogether, \eqref{paula} follows. This completes the proof.
\end{proof}

As a consequence, $\gamma_{pk}(Au)$ for $u\in
H_{P,\theta}^{s+\mu,\delta}(\overline\R_+^{1+d})$ is 
computable in terms of $\{\sigma_\cc^{-j}(A)\}_{j\geq0}$.

\begin{prop}\label{ae3}
Let $A\in \Psi_\cc^\mu(\overline\R_+^{1+d})$, $u\in
H_{P,\theta}^{s+\mu,\delta}(\overline\R_+^{1+d})$, and $(p,k)\in
P$, where $\Re p>1/2-\delta-\theta$. Then
\begin{equation}\label{asymp_exp}
  \gamma_{pk}(Au) = \sum_{j\geq0}\sum_{\ell-r=k}
  \frac1{r!}\,\partial_z^r\sigma_\cc^{-j}(A)(p+j)\gamma_{p+j,\ell}(u).
\end{equation}
The sum in the right-hand side is over those $(j,\ell,r)$,
where  $\Re p+j<1/2-\delta$ and  $0\leq \ell<m_{p+j}$. In
particular, this sum is finite.
\end{prop}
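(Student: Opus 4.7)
The plan is to reduce to the finite decomposition \eqref{decomp} of $u$ and, for each asymptotic building block $\Gamma_{ql}u_{ql}$ with $u_{ql}=\gamma_{ql}(u)$, Taylor-expand the Mellin amplitude of $A$ at $x=0$ and apply Proposition~\ref{important} term by term. Concretely, I write
\[
   u = u_{\mathrm{rem}} + \sum_{\substack{(q,l)\in P\\ \Re q>1/2-\delta-\theta}} \Gamma_{ql}u_{ql},
   \quad u_{\mathrm{rem}}\in\cK^{s+\mu-\theta,\delta+\theta}(\RR),
\]
the sum being finite by Definition~\ref{Def3.1}(ii). Since $A$ is bounded on the weighted scale (the core estimate in the proof of Proposition~\ref{prop3.20}, which rests on the holomorphy of the conormal symbols), one has $Au_{\mathrm{rem}}\in\cK^{s-\theta,\delta+\theta}$, a space on which every $\gamma_{pk}$ vanishes by \eqref{decomp}. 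Thus the task reduces to computing $\gamma_{pk}(A\Gamma_{ql}u_{ql})$ for each of the finitely many $(q,l)$ above.

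For fixed $(q,l)$, I decompose $A = A_M + A_\psi + A_r$ as in Definition~\ref{Def3.15}. Because $\Gamma_{ql}u_{ql}$ is supported in $x\lesssim 1$, the terms $A_\psi\Gamma_{ql}u_{ql}$ and $A_r\Gamma_{ql}u_{ql}$ sit either away from $x=0$ or in $\mathscr{S}_0(\oRR)$, and so drop out under $\gamma_{pk}$. Writing $A_M=\varphi\operatorname{op}_M(h)\varphi_0$, the cut-offs are likewise harmless modulo such contributions. I then Taylor-expand
\[
  h(x,y,z,D_y) = \sum_{j=0}^{J-1} x^j\,\sigma_\cc^{-j}(A)(y,z,D_y) + x^J h^{(J)}(x,y,z,D_y),
\]
with $J$ chosen so that $\Re q - J < 1/2-\delta-\theta$; then $x^J\operatorname{op}_M(h^{(J)})\Gamma_{ql}u_{ql}$ lands in $\cK^{s-\theta,\delta+\theta}$ and again contributes nothing.

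The heart of the computation is the finite sum $\sum_{j=0}^{J-1} x^j\operatorname{op}_M(\sigma_\cc^{-j}(A))\Gamma_{ql}u_{ql}$. Applying Proposition~\ref{important} with $h\rightsquigarrow\sigma_\cc^{-j}(A)\in\mathcal{M}^\mu(\R^d)$ gives, modulo $\bigcap_{\epsilon>0}\cK^{s-\epsilon,1/2-\Re q+\epsilon}(\RR)$,
\[
   \operatorname{op}_M(\sigma_\cc^{-j}(A))\Gamma_{ql}u_{ql} \equiv \sum_{r=0}^{l} \frac{1}{r!}\,\Gamma_{q,l-r}\bigl[\partial_z^r\sigma_\cc^{-j}(A)(q)\,u_{ql}\bigr].
\]
Multiplying by $x^j$ and using the elementary identity $x^j\Gamma_{q,m}[w] = \Gamma_{q-j,m}[w]$ (immediate from \eqref{nmu}) translates these into $\Gamma_{q-j,l-r}$-terms, with remainder now in $\cK^{s-\epsilon,1/2-\Re(q-j)+\epsilon}$. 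Extracting the coefficient of $\Gamma_{p,k}$ forces $q-j=p$ and $l-r=k$; reindexing by $q=p+j$, $r=l-k$ and summing over the $(q,l)$ in the decomposition of $u$ yields exactly \eqref{asymp_exp}, the resulting sum being automatically finite because $(p+j,\ell)\in P$ forces $\Re(p+j)<1/2-\delta$ and $\ell<m_{p+j}$.

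The main obstacle I anticipate is the compatible choice of the three parameters involved: the Taylor depth $J$, the loss $\epsilon$ in Proposition~\ref{important}, and the weight improvement $\theta$. All remainders produced during the expansion must simultaneously be absorbed into $\cK^{s-\theta,\delta+\theta}(\RR)$; this can be arranged by taking $J$ beyond the deepest shift of $q$ still lying inside the strip $1/2-\delta-\theta<\Re z<1/2-\delta$ and then choosing $\epsilon$ sufficiently small, after which the identity falls out by linearity.
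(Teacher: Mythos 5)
Your proposal is correct and reaches the conclusion by a somewhat different route than the paper. The paper's own proof is a density argument: by Lemma~\ref{dense}, $\cS_P(\oRR)$ is dense in $H_{P,\theta}^{s+\mu,\delta}(\oRR)$ (for every $\theta$, without any weight-line restriction), and since both sides of \eqref{asymp_exp} are continuous in $u$ (Propositions~\ref{prop3.20} and~\ref{trace_thm}), it suffices to check the identity on $\cS_P(\oRR)$, where the coefficients lie in $\cS(\R^d)$ and the Mellin residue calculation is classical, via \eqref{defnsp}, \eqref{heuristic}, \eqref{opmhh}, \eqref{paul}. You instead compute directly in the asymptotic Sobolev scale by using the decomposition~\eqref{decomp}, showing the remainder and the $A_\psi$, $A_r$ parts do not contribute, Taylor-expanding the Mellin amplitude of $A_M$, and invoking Proposition~\ref{important} on each building block $\Gamma_{ql}u_{ql}$ together with the identity $x^j\Gamma_{q,m}=\Gamma_{q-j,m}$. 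This is a legitimate alternative; Proposition~\ref{important} already encapsulates the residue computation at the level of $H^{s,\langle k\rangle}$-coefficients, so you trade the paper's clean reduction to the Schwartz class against directly handling the remainders.

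Two points you should tighten. First, the decomposition~\eqref{decomp} and the trace operators \eqref{trace_op} are only defined outright when $\pi_\C P\cap\Gamma_{1/2-\delta-\theta}=\emptyset$; for general $\theta$ the space is obtained by interpolation. You should state explicitly that you may reduce to this non-degenerate case (by interpolation, or by the density argument the paper uses); this is a genuine omission, albeit an easily repaired one. Second, the remark about ``choosing $\epsilon$ sufficiently small'' points in the wrong direction. To push the remainder $x^jR_\epsilon$ from $\cK^{s'-\epsilon,1/2-\Re q+j+\epsilon}$ to weight $\geq\delta+\theta$ you must take $\epsilon\geq\delta+\theta-(1/2-\Re(q-j))>0$, i.e.\ $\epsilon$ bounded \emph{below}; for $j=0$ the resulting $\epsilon$ exactly uses up the Sobolev margin, so the remainders only land in $\bigcap_{\epsilon>0}\cK^{s-\theta-\epsilon,\delta+\theta+\epsilon}(\RR)$ rather than in $\cK^{s-\theta,\delta+\theta}(\RR)$. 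That is still sufficient because $\gamma_{pk}$ annihilates $\cK^{s'',\delta+\theta}(\RR)$ for \emph{every} Sobolev order $s''$, and $\bigcap_{\epsilon>0}\cK^{s-\theta-\epsilon,\delta+\theta+\epsilon}\subset\cK^{s-\theta-\epsilon_0,\delta+\theta}$ for each $\epsilon_0>0$; but you should say this, rather than claim absorption into $\cK^{s-\theta,\delta+\theta}(\RR)$ itself.
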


\begin{proof}
By Lemma~\ref{dense} and continuity of the trace maps according to
Proposition~\ref{trace_thm}, it is enough to verify \eqref{asymp_exp}
when $u\in\cS_P(\oRR)$. In this case, \eqref{asymp_exp} follows from
\eqref{defnsp}, \eqref{heuristic}, \eqref{opmhh}, \eqref{paul}. (See
\cite{LW2004} for such explicit calculations.)
\end{proof}

%% ---------------------------------------------------------------------------------------

\subsubsection{Further results}

Here we collect several results about the calculus for cone-degenerate
pseudodifferential operators that we will need later, e.g., when
constructing a symmetrizer.

It is crucial that an integration by parts produces no boundary
terms. It is precisely this property which allows us to treat the
initial-boundary value problem Eq.~\eqref{ibvp} as a Cauchy problem.

\begin{lem}\label{ibparts}
For $A\in\Psi_\cc^1(\oRR;\C^N)$ and $u,v\in \cK^{1,\delta}(\RR;\C^N)$,
it holds that
\begin{equation}\label{fgt}
  \left\langle Au,v\right\rangle = \left\langle u,A^* v\right\rangle.
\end{equation}
\end{lem}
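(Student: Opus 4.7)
The plan is to establish \eqref{fgt} first for compactly supported test functions, where it holds essentially by definition of the formal adjoint, and then extend to all of $\cK^{1,\delta}(\RR;\C^N)$ by continuity and density. For $u,v\in\mathscr C_\cc^\infty(\R_+^{1+d};\C^N)$, the identity $\langle Au,v\rangle=\langle u,A^*v\rangle$ is just the defining property of the formal adjoint $A^*$ with respect to the inner product of the reference Hilbert space $\cK^{0,\delta}(\RR;\C^N)$. Since the class $\Psi_\cc^\infty(\oRR)$ is closed under adjoints, $A^*\in\Psi_\cc^1(\oRR;\C^N)$, and Proposition~\ref{prop3.20} supplies continuous mappings
\[
  A,\,A^*\colon \cK^{1,\delta}(\RR;\C^N) \longrightarrow \cK^{0,\delta}(\RR;\C^N).
\]
Combined with the continuity of the bilinear pairing $\langle\,\cdot\,,\cdot\,\rangle$ on $\cK^{0,\delta}$, this makes both sides of \eqref{fgt} jointly continuous in $(u,v)\in \cK^{1,\delta}(\RR;\C^N)\times\cK^{1,\delta}(\RR;\C^N)$.

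The key remaining ingredient is the density of $\mathscr C_\cc^\infty(\R_+^{1+d};\C^N)$ in $\cK^{1,\delta}(\RR;\C^N)$. This is a standard feature of weighted cone Sobolev spaces: via the Mellin transform, $\cK^{0,\delta}$ is essentially an $L^2$ space along the weight line $\Gamma_{1/2-\delta}$, and away from $x=0$ the space $\cK^{1,\delta}$ coincides locally with $H^1$. A two-step approximation, first cutting off in $x$ (by multiplication with $\chi(x/\lambda)\chi(\lambda x)$ for suitable $\chi$ and letting $\lambda\to 0^+$) to push support into a compact subset of $\R_+\times\R^d$, then mollifying in $(x,y)$, yields approximating sequences $u_n\to u$, $v_n\to v$ in $\cK^{1,\delta}(\RR;\C^N)$ with $u_n,v_n\in\mathscr C_\cc^\infty(\R_+^{1+d};\C^N)$. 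Passing to the limit in $\langle Au_n,v_n\rangle=\langle u_n,A^*v_n\rangle$ then delivers \eqref{fgt}.

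The main conceptual obstacle is to see why no boundary contribution at $x=0$ appears. It is the particular structure of $\Psi_\cc^1(\oRR;\C^N)$ that accounts for this: the Mellin piece $\varphi\operatorname{op}_M(h)\varphi_0$ is quantized along $\Gamma_{1/2-\delta}$, which is exactly the weight line matched to the $\cK^{0,\delta}$ inner product, so Parseval's identity for the Mellin transform on $\Gamma_{1/2-\delta}$ yields an exact adjoint without boundary correction; the Fourier piece $(1-\varphi)\operatorname{op}_\psi(a)(1-\varphi_1)$ is supported away from $x=0$ altogether; and the residual piece has rapidly decaying kernel at $x=0$. Because the approximating functions $u_n,v_n$ vanish identically near $x=0$, no boundary term is incurred along the way, and by the density and continuity argument above none can arise in the limit either. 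This is precisely the mechanism which, in the subsequent sections, allows the Cauchy problem \eqref{ibvp2} to be treated as if posed on a space without boundary.
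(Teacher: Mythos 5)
Your argument is correct and is essentially the paper's own proof: establish the identity on $\mathscr C_\cc^\infty(\R_+^{1+d};\C^N)$, where it holds by the very definition of the formal adjoint, and extend by density of $\mathscr C_\cc^\infty$ in $\cK^{1,\delta}$ together with the continuity of $A,A^*\colon\cK^{1,\delta}\to\cK^{0,\delta}$ and of the $\cK^{0,\delta}$ pairing. The additional commentary on why the Mellin quantization along $\Gamma_{1/2-\delta}$ produces no boundary contribution is a helpful heuristic but not needed for the proof.
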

\begin{proof}
Property \eqref{fgt} holds whenever
$u,v\in\cC_\cc^\infty(\RR;\C^N)$. Because $\cC_\cc^\infty(\RR;\C^N)$
is dense in $\cK^{1,\delta}(\RR;\C^N)$, the result follows.
\end{proof}

The existence of so-called order reductions is assured next.

\begin{prop}\label{Lem4.3}
Let $\mu\in\R$. Then there exists a selfadjoint, positive definite
operator $\Lambda^\mu\in\Psi_\cc^\mu(\overline\R_+^{1+d})$ such that
$\Lambda^{-\mu}=(\Lambda^\mu)^{-1}\in
\Psi_\cc^{-\mu}(\overline\R_+^{1+d})$. In particular,
\begin{equation}\label{or}
  \Lambda^\mu\colon
  H_{P,\theta}^{s+\mu,\delta}(\overline\R_+^{1+d}) \to
  H_{P,\theta}^{s,\delta}(\overline\R_+^{1+d})
\end{equation}
is an isomorphism for all $s\in\R$,
$P\in\underline{\textup{As}}^\delta$, and $\theta\geq0$.
\end{prop}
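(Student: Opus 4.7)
The plan is to build $\Lambda^\mu$ as an elliptic, selfadjoint, positive definite element of $\Psi_\cc^\mu(\oRR)$ whose inverse stays within the calculus; the isomorphism~\eqref{or} then follows from Proposition~\ref{prop3.20}. It suffices to treat $\mu>0$, since the case $\mu=0$ is trivial with $\Lambda^0=\mathrm I$, and for $\mu<0$ one sets $\Lambda^\mu=(\Lambda^{-\mu})^{-1}$ with $\Lambda^{-\mu}$ obtained from the positive case; the statement is symmetric in the roles of $\Lambda^{\pm\mu}$.

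First I would choose a scalar parameter-dependent elliptic symbol that is pointwise positive, for instance $a(\eta,\tau)=(|\eta|^2+\tau^2)^{\mu/2}$. By Proposition~\ref{hol_inv} there exists a holomorphic Mellin symbol $h\in\mathcal M^\mu(\R^d)$ with $\sigma_\psi^\mu(h)=a$ and holomorphic inverse $h^{-1}\in\mathcal M^{-\mu}(\R^d)$. Pair it with a standard elliptic $\tilde a\in S_\cl^\mu(\R^{1+d}\times\R^{1+d})$ having principal part~$a$ and set
\begin{equation*}
  \Lambda_0 = \varphi\,\operatorname{op}_M(h)\,\varphi_0 + (1-\varphi)\,\operatorname{op}_\psi(\tilde a)\,(1-\varphi_1)\in\Psi_\cc^\mu(\oRR).
\end{equation*}
Replacing $\Lambda_0$ by $\tfrac12(\Lambda_0+\Lambda_0^*)$ (closure under adjoints) produces an elliptic selfadjoint representative with pointwise positive compressed principal symbol, all of whose conormal symbols are holomorphic with $\sigma_\cc^0$ invertible on the whole of $\C$. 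A Gårding-type inequality in the cone calculus, combined with a sufficiently large positive selfadjoint shift of strictly lower order, then upgrades this to a selfadjoint, positive definite $\Lambda^\mu\in\Psi_\cc^\mu(\oRR)$ with the same leading symbolic data.

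Next, ellipticity produces a parametrix $B\in\Psi_\cc^{-\mu}(\oRR)$ with $B\Lambda^\mu=\mathrm I+R$, $R\in\Psi_\cc^{-\infty}(\oRR)$. Positive definiteness on the reference Hilbert space $\cK^{0,\delta}(\RR)$ makes $\Lambda^\mu$ bijective there, so $(\mathrm I+R)^{-1}$ exists; by Neumann summation inside the asymptotically closed residual class, $(\mathrm I+R)^{-1}-\mathrm I\in\Psi_\cc^{-\infty}(\oRR)$, whence $\Lambda^{-\mu}=(\mathrm I+R)^{-1}B\in\Psi_\cc^{-\mu}(\oRR)$. Applying Proposition~\ref{prop3.20} to both $\Lambda^{\mu}$ and $\Lambda^{-\mu}$, and extending the identities $\Lambda^{-\mu}\Lambda^\mu=\Lambda^\mu\Lambda^{-\mu}=\mathrm I$ from the dense subspace $\cS_P(\oRR)$ (Lemma~\ref{dense}) to all of $H_{P,\theta}^{s+\mu,\delta}(\oRR)$ and $H_{P,\theta}^{s,\delta}(\oRR)$, establishes the isomorphism~\eqref{or} for every $s\in\R$, $P\in\As^\delta$, and $\theta\geq 0$.

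The principal obstacle is keeping $\Lambda^{-\mu}$ inside $\Psi_\cc^{-\mu}(\oRR)$ and ensuring that the isomorphism holds at \emph{every} conormal weight, not only at $\gamma=\delta$. This is precisely where the paper's choice of holomorphic (rather than finitely meromorphic) conormal symbols, built into Definition~\ref{Def3.16}, is essential: since $\sigma_\cc^0(\Lambda^\mu)(z)$ is nowhere vanishing on all of~$\C$, no obstruction arises at any weight line $\Gamma_\beta$, the parametrix and its Neumann correction remain in the residual class uniformly in $\gamma\in\R$, and $\Lambda^{-\mu}$ realizes an isomorphism on every scale $\cK^{s,\gamma}(\RR)$, as anticipated in the outline at the end of Section~1.
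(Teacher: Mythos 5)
Your proposal takes a genuinely different route from the paper. The paper starts with a \emph{parameter-dependent} family $A(\lambda)\in\Psi_\cc^{\mu/2}(\oRR;\R)$ of half the order, arranged so that $\tilde\sigma_\psi^{\mu/2}(A)=(\tilde\xi^2+|\eta|^2+\lambda^2)^{\mu/4}$ and $\sigma_\cc^0(A)(z,\lambda)^{-1}\in\mathcal M^{-\mu/2}(\R^d;\R)$ via Proposition~\ref{hol_inv}. For $|\lambda|$ large, $A(\lambda)$ is then automatically invertible \emph{inside} the calculus, and $\Lambda^\mu:=A(\lambda)^*A(\lambda)$ is selfadjoint, positive definite, with $\Lambda^{-\mu}=A(\lambda)^{-1}(A(\lambda)^*)^{-1}\in\Psi_\cc^{-\mu}(\oRR)$. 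This bypasses entirely both the G\aa rding inequality and the parametrix/Neumann argument on which your proposal rests.

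The substantive gap in your version concerns the leading conormal symbol. You assert that after replacing $\Lambda_0$ by $\tfrac12(\Lambda_0+\Lambda_0^*)$ and adding a lower-order G\aa rding shift, the resulting $\Lambda^\mu$ still has $\sigma_\cc^0(\Lambda^\mu)(z)$ invertible on \emph{all} of $\C$ with inverse in $\mathcal M^{-\mu}(\R^d)$. But by the adjoint formula, $\sigma_\cc^0(\Lambda_0^*)(z)=\sigma_\cc^0(\Lambda_0)(1-2\delta-\bar z)^*$, so $\sigma_\cc^0\bigl(\tfrac12(\Lambda_0+\Lambda_0^*)\bigr)(z)=\tfrac12\bigl(h(z)+h(1-2\delta-\bar z)^*\bigr)$; this sum has no reason to be invertible at every $z\in\C$ even though $h$ and $h^{-1}$ are both holomorphic, and the G\aa rding shift $C\in\Psi_\cc^{\mu-1}$ alters $\sigma_\cc^0$ further. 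Positive definiteness on $\cK^{0,\delta}$ only controls the symbol on the single weight line $\Gamma_{1/2-\delta}$, not globally. Without global invertibility of $\sigma_\cc^0(\Lambda^\mu)$ you cannot build a parametrix $B$ inside $\Psi_\cc^{-\mu}(\oRR)$ at all (its leading conormal symbol would have to be $\sigma_\cc^0(\Lambda^\mu)^{-1}$), so the rest of the argument cannot get started. The paper's product structure $A(\lambda)^*A(\lambda)$, with both factors' leading conormal symbols globally invertible by construction, is precisely what makes this automatic. Secondly, the Neumann-series step — that $(\textup{I}+R)^{-1}-\textup{I}\in\Psi_\cc^{-\infty}(\oRR)$ — is morally right but needs the argument that $\Lambda^\mu$ is bijective (parametrix $\Rightarrow$ Fredholm of index $0$, positivity $\Rightarrow$ injectivity) and that $R\mapsto R(\textup{I}+R)^{-1}R$ preserves the $\cS_0\hat\otimes\cS_0$ kernel class; you do not spell this out. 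The parameter-dependent route avoids all of this.
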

\begin{proof}
One way to prove the result is to start with a parameter-dependent
version $\Psi_\cc^{\mu/2}(\overline\R_+^{1+d};\R)$ of the class
$\Psi_\cc^{\mu/2}(\overline\R_+^{1+d})$ consisting of families $A=
\left(A(\lambda)\right)_{\lambda\in\R}\subset
\Psi_\cc^{\mu/2}(\overline\R_+^{1+d})$, as in
Section~\ref{parameter-dep}. Choose a parameter-elliptic family $A\in
\Psi_\cc^{\mu/2}(\overline\R_+^{1+d};\R)$ with compressed principal
symbol $\tilde\sigma_\psi^{\mu/2}(A) = (\tilde\xi^2 + |\eta|^2 +
\lambda^2)^{\mu/4}$ and leading conormal symbol $\sigma_\cc^0(A) =
\sigma_\cc^0(A)(z,\lambda)\in \mathcal M^{\mu/2}(\R^d;\R)$ such that
$\sigma_\cc^0(A)^{-1}\in \mathcal M^{-\mu/2}(\R^d;\R)$ (see
Proposition~\ref{hol_inv}). Then $A(\lambda) \in
\Psi_\cc^{\mu/2}(\overline\R_+^{1+d})$ is invertible for $|\lambda|
\gtrsim 1$, with $A(\lambda)^{-1} \in
\Psi_\cc^{-\mu/2}(\overline\R_+^{1+d})$. Now pick a $\lambda\in \R$
with $|\lambda|$ large and set $\Lambda^\mu = A(\lambda)^*A(\lambda)$.
\end{proof}

We have the following form of G\aa rding's inequality.
 
\begin{prop} \label{Prop3.21}
Let $\mu\geq0$. Suppose that $A \in \Psi_\cc^{\mu}(\oRR;\C^N)$ has
a positive definite compressed principle symbol satisfying
$\tilde\sigma_\psi^\mu(A)(x,y,\tilde\xi,\eta)\gtrsim
(\tilde\xi^{\,2}+|\eta|^2)^{\mu/2}\,\textup{I}_{N}$. Then there exists
a $C=C^* \in \Psi_\cc^{\mu-1}(\overline\R_+^{1+d};\C^N)$ and a
constant $c>0$ such that
\[
  \Re \left\langle Au, u \right\rangle \ge c \left\|
  u\right\|_{\cK^{\mu/2,\delta}}^2 - \bigl\langle Cu, u \bigr\rangle
\]
for all $u\in\cK^{\mu,\delta}(\RR;\C^N)$.	
\end{prop}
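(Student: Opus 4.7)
The plan is to follow the classical square-root derivation of G\aa rding's inequality, implemented inside the cone calculus by means of the short exact sequence for $\tilde\sigma_\psi^\mu$, the order reductions of Proposition~\ref{Lem4.3}, and the integration-by-parts identity of Lemma~\ref{ibparts}. By hypothesis, there is a $c_0>0$ such that $\tilde\sigma_\psi^\mu(A)(x,y,\tilde\xi,\eta)\geq c_0(\tilde\xi^{\,2}+|\eta|^2)^{\mu/2}\,\textup{I}_N$ on $\widetilde T^\ast\oRR\setminus 0$. Fixing $c\in(0,c_0)$, the Hermitian matrix-valued symbol
\[
  b_0(x,y,\tilde\xi,\eta) := \tilde\sigma_\psi^\mu(A)(x,y,\tilde\xi,\eta) - c\,(\tilde\xi^{\,2}+|\eta|^2)^{\mu/2}\,\textup{I}_N
\]
remains uniformly positive definite, homogeneous of degree $\mu$ in $(\tilde\xi,\eta)$, and smooth up to $x=0$. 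I set $b := b_0^{1/2}$ via the pointwise positive-definite square root; the contour-integral representation $b = (2\pi\ii)^{-1}\oint\lambda^{1/2}(\lambda\,\textup{I}_N - b_0)^{-1}\,\dd\lambda$ shows that $b$ is a classical Hermitian symbol of order $\mu/2$, homogeneous in $(\tilde\xi,\eta)$.

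Using the surjectivity of $\tilde\sigma_\psi^{\mu/2}$ in the short exact sequence for $\Psi_\cc^{\mu/2}(\oRR;\C^N)$, I lift $b$ to an operator in $\Psi_\cc^{\mu/2}(\oRR;\C^N)$, and after replacing it by its Hermitian part I may assume that $B = B^\ast \in \Psi_\cc^{\mu/2}(\oRR;\C^N)$ with $\tilde\sigma_\psi^{\mu/2}(B)=b$. Proposition~\ref{Lem4.3} provides an order reduction $\Lambda^\mu\in\Psi_\cc^\mu(\oRR;\C^N)$ with $\tilde\sigma_\psi^\mu(\Lambda^\mu)=(\tilde\xi^{\,2}+|\eta|^2)^{\mu/2}\,\textup{I}_N$; it is selfadjoint and positive definite with $\langle\Lambda^\mu u,u\rangle\sim\|u\|_{\cK^{\mu/2,\delta}}^2$. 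The composition formula for compressed principal symbols then yields
\[
  \tilde\sigma_\psi^\mu\bigl(B^\ast B + c\Lambda^\mu\bigr) \;=\; b^2 + c\,(\tilde\xi^{\,2}+|\eta|^2)^{\mu/2}\,\textup{I}_N \;=\; \tilde\sigma_\psi^\mu(A),
\]
hence $R := A - B^\ast B - c\Lambda^\mu \in \Psi_\cc^{\mu-1}(\oRR;\C^N)$.

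For $u\in\cK^{\mu,\delta}(\RR;\C^N)$, Proposition~\ref{prop3.20} gives $Bu\in\cK^{\mu/2,\delta}(\RR;\C^N)\subset\cK^{0,\delta}(\RR;\C^N)$, and Lemma~\ref{ibparts}, which extends verbatim from $\mu=1$ to operators of order $\mu/2$ by density of $\cC_\cc^\infty(\RR;\C^N)$ in $\cK^{\mu,\delta}(\RR;\C^N)$ together with the mapping properties of $B$ and $B^\ast$, delivers $\langle B^\ast Bu,u\rangle = \|Bu\|^2 \geq 0$. Taking real parts in $\langle Au,u\rangle = \|Bu\|^2 + c\langle\Lambda^\mu u,u\rangle + \langle Ru,u\rangle$ and setting $C := -\tfrac{1}{2}(R+R^\ast) = C^\ast \in \Psi_\cc^{\mu-1}(\oRR;\C^N)$, I obtain
\[
  \Re\langle Au,u\rangle \;\geq\; c\,\langle\Lambda^\mu u,u\rangle - \langle Cu,u\rangle \;\gtrsim\; \|u\|_{\cK^{\mu/2,\delta}}^2 - \langle Cu,u\rangle,
\]
which is the claimed inequality.

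The only genuinely non-formal step is verifying that $b = b_0^{1/2}$ lies in the right symbol class, i.e., that smoothness up to $x=0$ and the symbolic estimates are preserved under the functional-calculus construction. The rest is routine bookkeeping inside the cone calculus: existence of the Hermitian lift $B$, existence of $\Lambda^\mu$, composition of principal symbols, and integration by parts against $u$. A secondary technical point is the extension of Lemma~\ref{ibparts} from order $1$ to order $\mu/2$, but this follows at once from Proposition~\ref{prop3.20} and the same density argument given there.
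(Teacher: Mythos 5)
Your proof is correct and follows essentially the same square-root derivation of G\aa rding's inequality that the paper uses: take a Hermitian square root $b$ of $\tilde\sigma_\psi^\mu(A)-c\,(\tilde\xi^2+|\eta|^2)^{\mu/2}\textup{I}_N$, lift it to $B=B^\ast$ in the cone calculus, peel off $\|Bu\|^2\geq0$ via the boundary-termless integration by parts, and absorb the remainder into a selfadjoint $C$ of one order lower. The only organizational difference is that the paper first substitutes $u=\Lambda^{-\mu/2}v$ to reduce to $\mu=0$ (so that the square-root symbol and all subsequent bookkeeping take place in order $0$, and the comparison term $c\|u\|^2$ appears without invoking an order reduction), whereas you carry the order $\mu$ throughout and use $\Lambda^\mu$ explicitly both to complete the symbolic square and to identify $\langle\Lambda^\mu u,u\rangle$ with $\|u\|_{\cK^{\mu/2,\delta}}^2$; both implementations are fine and rely on the same ingredients (surjectivity of $\tilde\sigma_\psi$, Proposition~\ref{Lem4.3}, and the density argument of Lemma~\ref{ibparts}).
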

\begin{proof}
Writing $u = \Lambda^{-\mu/2}v$ with
$v\in\cK^{\mu/2,\delta}(\RR;\C^N)$, we can assume that $\mu=0$. Then
$\tilde\sigma_\psi^0(A)(x,y,\tilde\xi,\eta)\geq 2c\, \textup{I}_{N}$
for some constant $c>0$. Choose $B\in \Psi_\cc^0(\oRR;\C^N)$ such
that $\tilde\sigma_\psi^0(B) = \bigl(\tilde\sigma_\psi^0(A) - c\,
\textup{I}_{N}\bigr)^{1/2}$. By construction, $C= B^*B -\Re A + c \in
\Psi_\cc^{-1}(\oRR;\C^N)$. We obtain
\[
  \Re \left\langle Au, u \right\rangle = \left\langle \left(\Re A\right)u, u
  \right\rangle = c\,\|u\|^2 + \|Bu\|^2 - \bigl\langle Cu, u
  \bigr\rangle \geq c\,\|u\|^2 - \bigl\langle Cu, u \bigr\rangle
\]
for $u\in\cK^{0,\delta}(\RR;\C^N)$.
\end{proof}
  
%% ---------------------------------------------------------------------------------------

\section{Proof of the main results}\label{PMR}

In this section, we establish our main results. In fact, it suffices
to prove Theorem~\ref{thm2}. Theorem~\ref{thm1} is a special case of
Theorem~\ref{thm2}, where $s\geq0$, $\delta=0$, $P=P_0$, and $\theta_r
= s-r+\sigma$ for $0\leq r\leq \sigma$. Besides, Theorem~\ref{thm1}
has been proven independently in
Section~\ref{sec2}. Theorem~\ref{thm3} follows from Theorem~\ref{thm2}
in the usual way using coordinate invariance (see \cite{LRW2022}) and
finite propagation speed. An alternative argument retraces the steps
of the proof of Theorem~\ref{thm2} displayed below and uses
cone-degenerate pseudodifferential operators from a class
$\Psi_\cc^\mu(\overline{\Omega};\C^N)$, where now operators in this
class are additionally assumed to be properly supported.

\medskip

We consider the Cauchy problem
\begin{equation}\label{psdo_eq}
\left\{ \enspace
\begin{aligned}
  \p_t u + \cA(t,x,y,xD_x,D_y) u &= f(t,x,y), \quad (t,x,y)\in (0,T)\times\RR, \\
  u\bigr|_{t=0} &= u_0(x,y),
\end{aligned}
\right.
\end{equation}
where
\[
   \cA \in \cC^\infty\bigl([0,T]; \Psi_\cc^1(\overline\R_+^{1+d};\C^N) 
   \bigr).
\]
We assume that the operator $\p_t + \cA(t,x,y,xD_x,D_y)$ is hyperbolic
in the sense that $\cA$ admits a \textit{symbolic symmetrizer\/}. This
means that there exists a $b \in
\cC^\infty\bigl([0,T];S^{(0)}(\widetilde
T^*\overline\R_+^{1+d}\setminus0 ;M_{N\times N}(\C))\bigr)$ such that
\begin{enumerate}[(i)]
\item $b(t,x,y,\tilde\xi,\eta)=b(t,x,y,\tilde\xi,\eta)^* \geq
  c\,\operatorname{I}_{N}$ for some constant $c>0$,
\item $b(t, x, y, \tilde\xi,\eta) \tilde\sigma_\psi^1(\mathcal
  A)(t,x,y,\tilde\xi,\eta)$ is skew-Hermitian for all $(t, x, y,
  \tilde\xi, \eta)$.
\end{enumerate}

\begin{ex} 
The operator $\cA(t,x,y,xD_x,D_y) = x A(t,x,y)\partial_x +
\sum_{j=1}^d A_j(t,x,y)\partial_j + B(t,x,y)$ from Eq.~\eqref{ibvp2}
was assumed to satisfy these assumptions.
\end{ex}

%% ---------------------------------------------------------------------------------------
      
\subsection{Well-posedness in weighted Sobolev spaces}

Employing the symbolic symmetrizer, $b$, we first construct a genuine
symmetrizer, $\cB$.

\begin{lem} \label{Lem4.1}
Let $b\in \cC^\infty\bigl([0,T];S^{(0)}(\widetilde
T^*\overline\R_+^{1+d}\setminus0;$ $M_{N\times N}(\C))\bigr)$ be a
symbolic symmetrizer for $\cA$. Then there exists a $\mathcal B\in
\cC^\infty\bigl([0,T];\Psi_\cc^0(\overline\R_+^{1+d}; \C^N)\bigr)$
such that
\[
  \tilde \sigma_\psi^0(\cB(t))=b(t) \quad \textup{and} \quad
  \cB(t)=\cB(t)^* \geq c \, \textup{I}_{N}
\]
for some $c>0$ and all $t \in [0, T]$.
\end{lem}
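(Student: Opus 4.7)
The plan is to construct $\cB(t)$ in two stages: first produce a selfadjoint operator $\cB_0(t)$ of order zero with $\tilde\sigma_\psi^0(\cB_0(t))=b(t)$, and then add a selfadjoint correction of order $-1$ so as to upgrade G\aa rding's inequality to an unconditional positivity bound while leaving the principal symbol unchanged.

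For the first stage, I would use the split exactness of the principal symbol sequence (applied smoothly in $t\in[0,T]$) to pick some $\cB_1\in\cC^\infty([0,T];\Psi_\cc^0(\oRR;\C^N))$ with $\tilde\sigma_\psi^0(\cB_1(t))=b(t)$. Since $b(t)^*=b(t)$ and the principal symbol of the formal adjoint is the adjoint of the principal symbol, the symmetrization
\[
  \cB_0(t) := \tfrac12\bigl(\cB_1(t)+\cB_1(t)^*\bigr)
\]
is selfadjoint, depends smoothly on $t$, and still has compressed principal symbol $b(t)$. For the second stage, I would apply Proposition~\ref{Prop3.21} with $\mu=0$ to $\cB_0(t)$; the hypothesis $b(t)\geq c_0\,\textup{I}_N$ comes from the defining property of the symbolic symmetrizer, so G\aa rding produces a constant $c>0$ and $C(t)=C(t)^*\in\Psi_\cc^{-1}(\oRR;\C^N)$ with
\[
  \langle\cB_0(t)u,u\rangle \;\geq\; c\,\|u\|^2 - \langle C(t)u,u\rangle, \quad u\in\cK^{0,\delta}(\RR;\C^N).
\]
Setting $\cB(t):=\cB_0(t)+C(t)$ then completes the argument: $\cB(t)=\cB(t)^*$, the compressed principal symbol is still $b(t)$ since $C(t)$ is of order $-1$, and adding $\langle C(t)u,u\rangle$ to both sides of the inequality above yields $\langle \cB(t)u,u\rangle\geq c\,\|u\|^2$ on the reference Hilbert space, i.e., $\cB(t)\geq c\,\textup{I}_N$.

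The main obstacle I anticipate is verifying that smooth joint dependence on $t$ persists through the entire construction. Inspecting the proof of Proposition~\ref{Prop3.21}, $C(t)$ is built from a matrix square root of $b(t)-c\,\textup{I}_N$, which is uniformly positive Hermitian on the cosphere bundle; hence the square root, the operator $B(t)\in\Psi_\cc^0(\oRR;\C^N)$ with that principal symbol, and thereby $C(t)$ itself, can all be chosen smoothly in $t$ (using compactness of $[0,T]$ to pick the constant $c$ independent of $t$). This is routine but is the only genuinely non-formal step in the plan.
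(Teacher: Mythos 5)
Your proposal matches the paper's proof essentially verbatim: pick $\cB_1$ with the right compressed principal symbol, symmetrize to get $\cB_0=\tfrac12(\cB_1+\cB_1^*)$, apply the G\aa rding inequality of Proposition~\ref{Prop3.21} (with $\mu=0$) to produce a selfadjoint $\mathcal C\in\cC^\infty([0,T];\Psi_\cc^{-1})$, and set $\cB=\cB_0+\mathcal C$. Your closing remark about tracking smooth $t$-dependence through the square root construction and using compactness of $[0,T]$ to fix $c$ uniformly is a reasonable extra sanity check that the paper leaves implicit.
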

 
\begin{proof} 
We pick a $\cB_1\in \cC^\infty\bigl([0,T];
\Psi_\cc^0(\overline\R_+^{1+d};\C^N)\bigr)$ with
$\tilde\sigma_\psi^0(\cB_1(t))=b(t)$ for $t \in [0, T]$ and set $\cB_0
=\left(\cB_1+\cB_1^*\right)/2$. Then
$\tilde\sigma_\psi^0(\cB_0(t))=b(t)$ for $t \in [0,T]$. By
Proposition~\ref{Prop3.21}, there exists a $\mathcal C\in
\cC^\infty\bigl([0,T];$ $\Psi_\cc^{-1}(\overline\R_+^{1+d};
\C^N)\bigr)$ with $\mathcal C(t)=\mathcal C(t)^*$ for $t\in[0,T]$ such
that, for any $u \in \cC^\infty_c(\R^{1+d}_+;\C^N)$ and $t \in [0,
  T]$,
\[
  \bigl\langle \cB_0(t) u, u \bigr\rangle \geq
c\,\|u\|^2 - \bigl\langle \mathcal C(t) u, u \bigr\rangle. 
\]
It follows that the operator $\cB=\cB_0 + \mathcal C$
has the desired properties.
 \end{proof}
 
Next we derive energy estimates for Eq.~\eqref{psdo_eq} in the
weighted Sobolev spaces $\cK^{s,\delta}(\RR;\C^N)$.

We start with the case $s=0$. As usual, the proof of the next
proposition relies on the following facts (as was already mentioned in
the introduction):
\begin{itemize}
\item $\langle \cB(t) u, u\rangle$ is equivalent to $\|u\|^2$
  uniformly in $t\in[0,T]$,
\item Integration by parts produces no boundary terms (see
  Lemma~\ref{ibparts}),
\item $\cB\cA +(\cB\cA)^* \in
  \cC^\infty([0,T];\Psi_\cc^0(\oRR;\C^N))$.
\end{itemize}

\begin{prop}\label{Lem4.2}
Let $u\in \cC([0,T];\mathcal K^{1,\delta}(\R_+^{1+d};\C^N)) \cap
\cC^1([0,T];\mathcal K^{0,\delta}(\R_+^{1+d};\C^N)) $. Then
\begin{equation}\label{4.2}
  \sup_{0\leq t\leq T}\|u(t)\| \lesssim \|u(0)\| + \int_0^T
  \|\partial_t u(t) - \cA(t)u(t)\|\,\dd t.
\end{equation}
\end{prop}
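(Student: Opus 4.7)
The plan is to carry out the classical energy argument in the basic Hilbert space $\cK^{0,\delta}(\RR;\C^N)$ with the symmetrizer $\cB$ of Lemma~\ref{Lem4.1} playing the role of a weight in the inner product. I would set
$$
  E(t) := \langle \cB(t) u(t), u(t)\rangle.
$$
By Lemma~\ref{Lem4.1} we have $\cB(t) \geq c\,\textup{I}$, and Proposition~\ref{prop3.20} gives $\|\cB(t)u\| \lesssim \|u\|$ uniformly in $t \in [0,T]$. Hence $E(t)$ is equivalent to $\|u(t)\|^2$ uniformly for $t \in [0,T]$, and bounding $E$ suffices.

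Since $\cB$ is $\cC^\infty$ in $t$ with values in $\Psi_\cc^0(\oRR;\C^N) \subset \cL(\cK^{0,\delta})$ and $u \in \cC^1([0,T]; \cK^{0,\delta})$, the map $t \mapsto \cB(t) u(t)$ is $C^1$ into $\cK^{0,\delta}$, so $E$ is $C^1$ on $[0,T]$ with
$$
  E'(t) = \langle (\partial_t \cB)(t) u, u\rangle + 2\Re\langle \cB(t)\,\partial_t u, u\rangle,
$$
where I have used $\cB = \cB^*$. Setting $f := \partial_t u - \cA u$ and substituting $\partial_t u = \cA u + f$ splits this as
$$
  E'(t) = \langle (\partial_t \cB) u, u\rangle + 2\Re\langle \cB \cA u, u\rangle + 2\Re\langle \cB f, u\rangle.
$$
The first summand is $O(\|u\|^2)$ because $\partial_t \cB \in \Psi_\cc^0$ uniformly in $t$; the third is controlled by $\|\cB f\|\,\|u\| \lesssim \|f\|\,\|u\|$. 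For the middle term, the hypothesis $u(t) \in \cK^{1,\delta}$ lets me invoke Lemma~\ref{ibparts} for $\cB\cA \in \Psi_\cc^1$, giving
$$
  2\Re\langle \cB\cA u, u\rangle = \langle (\cB\cA + (\cB\cA)^*) u, u\rangle,
$$
and the symmetrizer property that $b\,\tilde\sigma_\psi^1(\cA)$ is skew-Hermitian forces the compressed principal symbol of $\cB\cA + (\cB\cA)^*$ to vanish. Hence $\cB\cA + (\cB\cA)^* \in \cC^\infty([0,T]; \Psi_\cc^0(\oRR;\C^N))$ is bounded on $\cK^{0,\delta}$, and the middle term is likewise $O(\|u\|^2)$.

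Combining the three bounds yields a differential inequality $E'(t) \leq C\, E(t) + C'\sqrt{E(t)}\,\|f(t)\|$. The standard regularization $g_\epsilon(t) := \sqrt{E(t) + \epsilon}$ (using $\sqrt{E} \leq g_\epsilon$) gives $g_\epsilon'(t) \leq \tfrac{C}{2}\, g_\epsilon(t) + \tfrac{C'}{2}\|f(t)\|$, and the scalar Gronwall lemma, followed by $\epsilon \to 0$, produces~\eqref{4.2}. The main obstacle, and the very point of the cone calculus developed in Section~\ref{CDPSDO}, is the absence of boundary contributions in the integration by parts of the middle term: the class $\Psi_\cc^1$ is engineered precisely so that $\cA$ involves $xD_x$ rather than $D_x$ in the normal direction, which is what makes Lemma~\ref{ibparts} hold on $\cK^{1,\delta}$; once this is secured, the symbolic symmetrizer condition reduces $\cB\cA + (\cB\cA)^*$ to a bounded operator on the base Hilbert space $\cK^{0,\delta}$, so the estimate proceeds exactly as it would for a Cauchy problem on free space $\R^{1+d}$.
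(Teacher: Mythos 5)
Your proof is correct and takes essentially the same route as the paper: form the weighted energy $\langle\cB(t)u(t),u(t)\rangle$, substitute $\partial_t u = \cA u + f$, use Lemma~\ref{ibparts} plus the skew-Hermiticity of $b\,\tilde\sigma_\psi^1(\cA)$ to reduce $2\Re\langle\cB\cA u,u\rangle$ to a $\Psi_\cc^0$ quadratic form, and close with Gronwall. The only cosmetic difference is the final step: the paper absorbs $\partial_t\cB + 2\Re(\cB\cA) \leq 2C\cB$ into a factor $\ee^{-2Ct}$, integrates, and completes a square in the running supremum $K$, whereas you pass to the regularized square root $g_\epsilon=\sqrt{E+\epsilon}$ and apply the differential Gronwall lemma; both are standard and equivalent.
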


\begin{proof}
Let $u(0)=u_0$, $\partial_t u -\cA u=f$. By construction,
there exists a constant $C>0$ such that
\[
  2\Re(\cB\mathcal A)+\partial_t\cB\leq 2C\cB.
\]
Then
\[
\begin{aligned}
  \partial_t
  \bigl( \langle \cB u,u\rangle \ee^{-2C t}\bigr) &\leq   \partial_t
  \bigl( \langle \cB u,u\rangle \ee^{-2C t}\bigr) - \bigl(
  2\Re \langle \cB\cA u,u\rangle - 2C \langle\cB
  u,u\rangle + \langle (\partial_t\cB)u,u\rangle
  \bigr)\ee^{-2C t} \\ &= 2\Re\langle\cB f,u\rangle \ee^{-2C t}.
\end{aligned}
\]
Setting $K=\sup_{t\in[0,T]}\langle \cB(t)u(t),u(t)\rangle^{1/2}\ee^{-C
  t}$, the Cauchy-Schwarz inequality implies that
\[
  \langle \cB(t) u(t),u(t)\rangle \ee^{-2C t} \leq \langle
  \cB(0) u_0,u_0\rangle + 2 K \int_0^t \langle \cB(s)
  f(s),f(s)\rangle^{1/2}\ee^{-C s}\,\dd s,
\]
i.e.,
\[
  \Bigl(K-\int_0^T \langle \cB(t) f(t),f(t)\rangle^{1/2}\ee^{-C
    t}\,\dd t\Bigr)^2 \leq \Bigl(\langle \cB(0) u_0,u_0\rangle^{1/2} +
  \int_0^T \langle \cB(t) f(t),f(t)\rangle^{1/2}\ee^{-C t}\,\dd
  t\Bigr)^2
\]
It follows that
\[
  K \leq \langle \cB(0) u_0,u_0\rangle^{1/2} + 2\int_0^T \langle
  \cB(t) f(t),f(t)\rangle^{1/2}\ee^{-C t}\,\dd t.
\]
Because the norm $v\mapsto \langle \cB(t)v,v\rangle^{1/2}\ee^{-C t}$
is equivalent to $\|v\|$ uniformly in $t\in[0,T]$, this finishes the
proof.
\end{proof} 

As an immediate consequence we have the next result.

\begin{prop} \label{Thm4.4}
Let $u\in \cC([0,T];\mathcal K^{s+1,\delta}(\R_+^{1+d};\C^N)) \cap
\cC^1(([0,T];\mathcal K^{s,\delta}(\R_+^{1+d};\C^N))$ for some
$s\in\R$. Then
\begin{equation}\label{estss}
  \sup_{0\leq t\leq T}\|u(t)\|_{\mathcal K^{s,\delta}} \lesssim
  \|u(0)\|_{\mathcal K^{s,\delta}} + \int_0^T \|\partial_t u(t)
  -\cA(t) u(t)\|_{\mathcal K^{s,\delta}}\,\dd t.
\end{equation}
\end{prop}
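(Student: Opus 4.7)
The plan is to reduce the general $s$ case to the case $s=0$ already handled by Proposition~\ref{Lem4.2}, by conjugating $\cA$ with an order reduction. By Proposition~\ref{Lem4.3}, fix $\Lambda^s \in \Psi_\cc^s(\oRR;\C^N)$ selfadjoint and positive definite with inverse $\Lambda^{-s} \in \Psi_\cc^{-s}(\oRR;\C^N)$. Taking $P = \mathcal O$ and $\theta = 0$ in \eqref{or}, so that $H_{\mathcal O,0}^{s,\delta}(\oRR;\C^N) = \cK^{s,\delta}(\RR;\C^N)$, both $\Lambda^s\colon \cK^{s+1,\delta} \to \cK^{1,\delta}$ and $\Lambda^s\colon \cK^{s,\delta} \to \cK^{0,\delta}$ are isomorphisms, with norms comparable in both directions.

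Set $v(t) = \Lambda^s u(t)$, so that $v\in \cC([0,T];\cK^{1,\delta}(\RR;\C^N)) \cap \cC^1([0,T];\cK^{0,\delta}(\RR;\C^N))$ and $\|v(t)\|$ is equivalent to $\|u(t)\|_{\cK^{s,\delta}}$ uniformly in $t$. Introduce the conjugated operator $\widetilde{\cA}(t) := \Lambda^s \cA(t) \Lambda^{-s}$. By the composition property of $\Psi_\cc^\infty(\oRR;\C^N)$, together with smooth dependence on $t$ and the fact that $\Lambda^s$ is independent of $t$, one has $\widetilde{\cA} \in \cC^\infty([0,T]; \Psi_\cc^1(\oRR;\C^N))$, and multiplicativity of the compressed principal symbol yields $\tilde\sigma_\psi^1(\widetilde{\cA}(t)) = \tilde\sigma_\psi^1(\cA(t))$. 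In particular, the symbolic symmetrizer $b$ used for $\cA$ also symmetrizes $\widetilde{\cA}$, so $\widetilde{\cA}$ satisfies exactly the hypotheses used to prove Proposition~\ref{Lem4.2}.

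I would then apply Proposition~\ref{Lem4.2} to $v$ with the operator $\widetilde{\cA}$ in place of $\cA$, obtaining
\[
  \sup_{0\leq t\leq T} \|v(t)\| \lesssim \|v(0)\| + \int_0^T \|\partial_t v(t) - \widetilde{\cA}(t) v(t)\|\,\dd t.
\]
Since $\partial_t v - \widetilde{\cA} v = \Lambda^s(\partial_t u - \cA u)$ and $\Lambda^s$ is an isomorphism from $\cK^{s,\delta}$ onto $\cK^{0,\delta}$, the integrand is equivalent to $\|\partial_t u(t) - \cA(t) u(t)\|_{\cK^{s,\delta}}$; combined with $\|v(t)\| \sim \|u(t)\|_{\cK^{s,\delta}}$, this yields \eqref{estss}.

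The only non-routine point is the stability of the hypotheses of Proposition~\ref{Lem4.2} under the conjugation $\cA \mapsto \widetilde{\cA}$. Its proof relies on (i) the existence of a genuine symmetrizer constructed via Lemma~\ref{Lem4.1} from a symbolic symmetrizer, (ii) the fact that $\cB\widetilde{\cA} + (\cB\widetilde{\cA})^* \in \cC^\infty([0,T];\Psi_\cc^0(\oRR;\C^N))$, which depends only on the principal symbol of $\widetilde{\cA}$, and (iii) the boundary-term-free integration-by-parts identity of Lemma~\ref{ibparts}; all three are preserved since $\widetilde{\cA}$ lies in the same class with the same principal symbol, so no genuinely new obstacle arises.
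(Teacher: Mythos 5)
Your proof is correct and follows essentially the same route as the paper: conjugate by the order reduction $\Lambda^s$ from Proposition~\ref{Lem4.3}, observe that $\Lambda^s\cA\Lambda^{-s}$ lies in $\cC^\infty([0,T];\Psi_\cc^1(\oRR;\C^N))$ with unchanged compressed principal symbol, apply the $s=0$ estimate of Proposition~\ref{Lem4.2} to $\Lambda^s u$, and transfer back using the norm equivalence. The only cosmetic difference is that the paper takes $\Lambda^s$ scalar (acting componentwise), which you may want to note, but this does not affect the argument.
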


\begin{proof}
Let again $u(0)=u_0$, $\partial_t u(t) - \cA u=f$. Let
$\Lambda^s\in\Psi_\cc^s(\overline\R_+^{1+d})$ be a scalar invertible
operator such that
$(\Lambda^s)^{-1}\in\Psi_\cc^{-s}(\overline\R_+^{1+d})$, as
constructed in Lemma~\ref{Lem4.3}. Then, $\Lambda^s u$ solves the
system
\begin{equation} \label{4.3}
\left\{ \enspace
\begin{aligned}
   & \partial_t (\Lambda^s u) + \Lambda^s \mathcal
  A(t)\Lambda^{-s}(\Lambda^s u) = \Lambda^s f(t),
  %% \quad  0\leq t\leq T,
  \\
  & (\Lambda^s u)\bigr|_{t=0} = \Lambda^s u_0.
\end{aligned}
\right.
\end{equation}
Notice that $\Lambda^s \cA \,\Lambda^{-s} \in \cC^\infty([0,T];
\Psi_\cc^1(\overline\R_+^{1+d};\C^N))$ and $\tilde
\sigma_\psi^1(\Lambda^s\cA\Lambda^{-s})= \tilde
\sigma_\psi^1(\cA)$. Hence, system \eqref{4.3} is sym\-me\-tri\-zable
hyperbolic. Applying Proposition~\ref{Lem4.2} yields
\[
  \sup_{0\leq t\leq T}\|\Lambda^s u(t)\| \lesssim \|\Lambda^s u_0\| +
  \int_0^T \|\Lambda^s f(t)\|\,\dd t.
\]
As $\|\Lambda^s\cdot\|$ is an equivalent norm on $\mathcal
K^{s,\delta}$, we obtain estimate \eqref{estss}.
\end{proof}

\begin{prop} \label{Thm1.1}
Let $u_0\in \mathcal K^{s,\delta}(\R_+^{1+d};\C^N)$ and $f\in
L^1((0,T); \mathcal K^{s,\delta}(\R_+^{1+d};\C^N))$ for some
$s\in\R$. Then \textup{Eq.~\eqref{psdo_eq}} possesses a unique
solution $u\in \cC([0,T]; \mathcal
K^{s,\delta}(\R_+^{1+d};\C^N))$. Moreover, the energy inequality
\[
    \sup_{0\leq t\leq T}\|u(t)\|_{\mathcal K^{s,\delta}} \lesssim
    \|u_0\|_{\mathcal K^{s,\delta}} + \int_0^T \|f(t)\|_{\mathcal
      K^{s,\delta}}\,\dd t 
\]
holds.
\end{prop}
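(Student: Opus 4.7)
The plan is to combine the a priori energy estimate of Proposition~\ref{Thm4.4} with a Friedrichs mollifier regularization and a density argument in the data, leveraging the symbolic calculus developed in Section~\ref{CDPSDO}.

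First, I reduce to the case $s=0$. Conjugation of \eqref{psdo_eq} by the order reduction $\Lambda^s$ of Proposition~\ref{Lem4.3} transforms the problem into one of the same form (in $\tilde u = \Lambda^s u$) with $\cA$ replaced by $\Lambda^s\cA\Lambda^{-s}\in\cC^\infty([0,T];\Psi_\cc^1(\oRR;\C^N))$. The compressed principal symbol is unchanged, so $b$ remains a symbolic symmetrizer; since $\Lambda^s\colon\cK^{s,\delta}\to\cK^{0,\delta}$ is an isomorphism, it suffices to prove the proposition for $s=0$.

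For existence I select a self-adjoint family of regularizers $J_\epsilon\in\Psi_\cc^{-\infty}(\oRR;\C^N)$ with $J_\epsilon\to\textup{I}$ strongly on $\cK^{0,\delta}$ as $\epsilon\to 0$, and set $\cA_\epsilon=J_\epsilon\cA J_\epsilon$. Since $\cA_\epsilon(t)$ is bounded on $\cK^{0,\delta}$ for each $\epsilon>0$, the regularized Cauchy problem $\partial_t u^\epsilon+\cA_\epsilon u^\epsilon=f$, $u^\epsilon(0)=u_0$, has a unique solution $u^\epsilon\in\cC([0,T];\cK^{0,\delta})$ by standard Banach-space ODE theory. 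Retracing the proof of Proposition~\ref{Lem4.2} with $\cA$ replaced by $\cA_\epsilon$ (the crucial input being that $\cB\cA_\epsilon+\cA_\epsilon^*\cB$ is bounded on $\cK^{0,\delta}$ uniformly in $\epsilon$, as $\cB\cA+\cA^*\cB\in\cC^\infty([0,T];\Psi_\cc^0(\oRR;\C^N))$ is bounded on $\cK^{0,\delta}$ and the commutators with $J_\epsilon$ are uniformly bounded), I obtain
\[
  \sup_{0\leq t\leq T}\|u^\epsilon(t)\|\leq C\bigl(\|u_0\|+\|f\|_{L^1((0,T);\cK^{0,\delta})}\bigr)
\]
with $C$ independent of $\epsilon$. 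Applying the same bound to $u^\epsilon-u^{\epsilon'}$, whose forcing consists of commutator terms $[J_\epsilon-J_{\epsilon'},\cA]\,u^{\epsilon'}$ and the like that tend to zero strongly, shows that $\{u^\epsilon\}$ is Cauchy in $\cC([0,T];\cK^{0,\delta})$. Its limit $u$ is the desired solution, satisfies the equation distributionally, and inherits the energy inequality. Uniqueness follows by applying Proposition~\ref{Thm4.4} to the difference of two candidate solutions after a preliminary mollification to meet the regularity hypothesis there.

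The main obstacle is the construction and analysis of the mollifier family $J_\epsilon$: one needs a single family satisfying simultaneously strong convergence to $\textup{I}$ on $\cK^{0,\delta}$, uniform boundedness and strong vanishing of $[J_\epsilon,\cA]$ on $\cK^{0,\delta}$, and compatibility with the symmetrizer $\cB$. A natural choice is $J_\epsilon=\chi(\epsilon\Lambda^1)$ for a cutoff $\chi\in\cC_\cc^\infty(\R)$ with $\chi(0)=1$, using the order reduction $\Lambda^1$ from Proposition~\ref{Lem4.3}; the required commutator estimates follow from the symbolic apparatus of Section~\ref{CDPSDO}. As a backup, existence may be obtained via duality with the backward adjoint problem $-\partial_t v+\cA(t)^*v=g$, $v(T)=0$, observing that $b^{-1}$ is a symbolic symmetrizer for $\cA^*$ (by transposing the skew-Hermiticity identity $b\,\tilde\sigma_\psi^1(\cA)+\tilde\sigma_\psi^1(\cA)^*b=0$). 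This yields a backward energy estimate via Proposition~\ref{Thm4.4}, and Hahn--Banach applied to $\ell(g)=\int_0^T\langle f,v_g\rangle\,\dd t+\langle u_0,v_g(0)\rangle$ produces a weak solution in $L^\infty((0,T);\cK^{0,\delta})$ that is promoted to $\cC([0,T];\cK^{0,\delta})$ via the energy estimate and density of smooth data.
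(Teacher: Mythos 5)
Your primary route (Friedrichs mollification plus a Cauchy‑sequence argument) is genuinely different from the paper's, which argues existence entirely by duality; your ``backup'' paragraph is in fact essentially the paper's proof. The paper sets $\mathcal Y=\{v\in\cC([0,T];\cS(\R^d;\C^N))\mid v(T)=0\}$, observes that $\partial_t+\cA(T-t)^*$ is again symmetrizable hyperbolic, derives the backward estimate from Proposition~\ref{Thm4.4}, and applies Hahn--Banach to the functional $g\mapsto\int_0^T\langle f,v\rangle\,\dd t+\langle u_0,v(0)\rangle$ to obtain $u\in L^\infty((0,T);\cK^{s,\delta})$; continuity in time and $u(0)=u_0$ are first obtained when $f\in L^1((0,T);\cK^{s+1,\delta})$, and the general case is recovered by approximating $(u_0,f)$ by smoother data and invoking Proposition~\ref{Thm4.4}. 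That last approximation step is indispensable, and it is precisely where your mollifier route runs into trouble.

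The gap in your primary argument: to show $\{u^\epsilon\}$ is Cauchy you compute $\partial_t(u^\epsilon-u^{\epsilon'})+\cA_\epsilon(u^\epsilon-u^{\epsilon'})=(\cA_{\epsilon'}-\cA_\epsilon)u^{\epsilon'}$ with zero initial data, so the right‑hand side must tend to $0$ in $L^1((0,T);\cK^{0,\delta})$. But you only control $u^{\epsilon'}$ uniformly in $\cK^{0,\delta}$, and $\cA_{\epsilon'}-\cA_\epsilon$ contains terms of the shape $\cA(J_{\epsilon'}^2-J_\epsilon^2)$, which lose one derivative. Strong convergence of $J_\epsilon\to\textup{I}$ on $\cK^{0,\delta}$ does not make $\cA(J_{\epsilon'}^2-J_\epsilon^2)u^{\epsilon'}$ small: you would need a uniform $\cK^{1,\delta}$ bound on $u^{\epsilon'}$, which you do not have for $\cK^{0,\delta}$ data. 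The standard repair is exactly the paper's: first solve for data $u_0\in\cK^{1,\delta}$, $f\in L^1((0,T);\cK^{1,\delta})$ (or directly by duality, which needs no mollifier at all), then approximate general data and use the $\cK^{0,\delta}$ energy inequality to pass to the limit. Beyond this, the construction of $J_\epsilon$ (your flagged ``main obstacle'') is itself nontrivial in $\Psi_\cc^\infty(\oRR)$: $\chi(\epsilon\Lambda^1)$ for compactly supported $\chi$ is not obviously of the form required in Definition~\ref{Def3.15}, and the analogue of Friedrichs' lemma for the Mellin part near $x=0$ would need to be proved. The duality route sidesteps all of this, which is presumably why the paper chose it. Finally, for uniqueness the paper is simpler than you propose: a solution $u\in\cC([0,T];\cK^{s,\delta})$ of the homogeneous problem already satisfies the hypotheses of Proposition~\ref{Thm4.4} with $s$ replaced by $s-1$ (since $\cA(t)u$ and hence $\partial_t u$ lie in $\cC([0,T];\cK^{s-1,\delta})$), so no mollification is needed.

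Your observation that $b^{-1}$ is a symbolic symmetrizer for $\cA^*$ is correct and is a clean way to see that the backward problem is symmetrizable hyperbolic; the reduction to $s=0$ via $\Lambda^s$ is also fine, although the paper instead works at general $s$ throughout, having already established Proposition~\ref{Thm4.4} for all $s\in\R$.
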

\begin{proof}
\textsl{Uniqueness}. Let $u_0=0$, $f=0$. Then $\cA(t)u \in \cC([0,T];
\mathcal K^{s-1,\delta}(\R_+^{1+d};\C^N))$ and, consequently, we
obtain $\partial_t u \in \cC([0,T]; \mathcal
K^{s-1,\delta}(\R_+^{1+d};\C^N))$ from the equation. Hence,
estimate~\eqref{estss} (with $s$ replaced with $s-1$) yields $u=0$.

\smallskip

\textsl{Existence}.  We argue by duality. Set $\mathcal Y=\{v\in
\cC([0,T];\cS(\R^d;\C^N))\mid v(T)=0\}$. The operator $\partial_t +
\cA(T-t)^*$ is symmetrizable hyperbolic. Hence, estimate~\eqref{estss}
implies (after the change of variables $t\mapsto T-t$)
\[
  \sup_{0\leq t\leq T}\|v(t)\|_{\cK^{-s,\delta}} \lesssim \int_0^T
  \|-\partial_t v(t) + \cA(t)^* v(t)\|_{\cK^{-s,\delta}}\,\dd t, \quad
  v\in\mathcal Y.
\]
We now consider the functional
\begin{equation}\label{BS}
  g \mapsto \int_0^T \left\langle f(t),v(t)\right\rangle\dd t +
  \left\langle u_0,v(0)\right\rangle 
\end{equation}
on the space $\left(-\partial_t + \cA(t)^*\right)\mathcal Y$, where $g
= -\partial_t v + \cA(t)^* v$, $v\in\mathcal Y$. We have the estimate
\[
\begin{aligned}
  \left| \int_0^T \left\langle f(t),v(t)\right\rangle\dd t +
  \left\langle u_0,v(0)\right\rangle\right| &\leq
  \|f\|_{L_t^1\cK^{s,\delta}} \|v\|_{L_t^\infty\cK^{-s,\delta}}
  +\|u_0\|_{\cK^{s,\delta}} \|v(0)\|_{\cK^{-s,\delta}} \\ &\lesssim \int_0^T
  \|g(t)\|_{\cK^{-s,\delta}}\,\dd t.
\end{aligned}
\]
By the Hahn-Banach theorem, the functional in \eqref{BS} extends to a
bounded functional on the space $L^1((0,T);
\cK^{-s,\delta}(\RR;\C^N))$. By duality, such an extension is given as
$g \mapsto \int_0^T \langle u(t),g(t)\rangle\,\dd t$ for some uniquely
determined $u\in L^\infty((0,T);\cK^{s,\delta}(\RR;\C^N))$. We obtain
that
\begin{equation}\label{nvf}
  \int_0^T \langle u(t), -\partial_t v(t) +\cA(t)^* v(t)\rangle\,\dd t
  = \int_0^T \left\langle f(t),v(t)\right\rangle\dd t + \left\langle
  u_0,v(0)\right\rangle, \quad v\in \mathcal Y.
\end{equation}
Taking $v\in \cC_\cc^\infty((0,T)\times\RR;\C^N)$ demonstrates that
$u$ is a weak solution to $\partial_t u +\mathcal A(t) u = f(t)$ on
$(0,T)\times \RR$.

If $f\in L^1((0,T);\cK^{s+1,\delta}(\RR;\C^N))$, then $u\in
L^\infty((0,T);\cK^{s+1,\delta}(\RR;\C^N))$ and, moreover, $u \in \cC
([0,T];\cK^{s,\delta}(\RR;\C^N))$ from the equation. Indeed, $u$ is
absolutely continuous with values in $\cK^{s,\delta}(\RR;\C^N)$. In
addition, it follows from \eqref{nvf} that $u(0)=u_0$. In the general
case, we choose sequences $(u_{0m})\subset \cK^{s+1,\delta}(\RR;\C^N)$
and $(f_m) \subset \cC([0,T];\cK^{s+2,\delta}(\RR;\C^N))$ such that
\[
  u_{0m} \to u_0 \enspace \text{in $\cK^{s,\delta}(\RR;\C^N)$}, \quad
  f_m \to f \enspace \text{in $L^1((0,T);\cK^{s+1,\delta}(\RR;\C^N))$.}
\]
Let $(u_m)\subset \cC([0,T];\cK^{s+1,\delta}(\RR;\C^N)) \cap
\cC^1([0,T];\cK^{s,\delta}(\RR;\C^N))$ be the sequence of solutions to
Eq.~\eqref{psdo_eq}, with the data $(u_0,f)$ replaced with
$(u_{0m},f_m)$. By Proposition~\ref{Thm4.4}, $(u_m)$ is a Cauchy
sequence in $\cC([0,T];\cK^{s,\delta}(\RR;\C^N))$. It is readily seen
that its limit $u$ is the desired solution.
\end{proof}

Eventually, we discuss higher regularity with respect to $t$.

\begin{prop}\label{abcd}
Let $u_0\in \cK^{s+\sigma,\delta}(\RR;\C^N)$, $f\in
\bigcap_{r=0}^\sigma W^{r,1}((0,T);\cK^{s-r+\sigma}(\RR;\C^N))$ for
some $s\in\R$, $\sigma\in \N_0$. Then the unique solution $u$ to\/
\textup{Eq.~\eqref{psdo_eq}} belongs to the space
$\bigcap_{r=0}^\sigma \cC^r([0,T];$
$\cK^{s-r+\sigma}(\RR;\C^N))$. Moreover, the energy inequality
\[
    \sum_{r=0}^\sigma \sup_{0\leq t\leq T}\|\partial_t^r
    u(t)\|_{\mathcal K^{s-r+\sigma,\delta}} \lesssim \|u_0\|_{\mathcal
      K^{s+\sigma,\delta}} + \sum_{r=0}^\sigma\int_0^T \|\partial_t^r
    f(t)\|_{\mathcal K^{s-r+\sigma,\delta}}\,\dd t
\]
holds.
\end{prop}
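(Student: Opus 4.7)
The plan is to prove the proposition by induction on $\sigma\in\N_0$, the base case $\sigma=0$ being exactly Proposition~\ref{Thm1.1}. For the inductive step, the strategy is to formally differentiate the equation in $t$, derive an a priori estimate for sufficiently smooth solutions by iterating Proposition~\ref{Thm4.4}, and finally pass to the limit using smooth approximations.

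First I would construct the would-be initial values of the time derivatives. Define recursively, for $r=0,1,\dotsc,\sigma$,
\[
  v_0(0)=u_0, \quad v_r(0)=\partial_t^{r-1}f(0)-\sum_{k=0}^{r-1}\binom{r-1}{k}(\partial_t^k\cA)(0)\,v_{r-1-k}(0).
\]
The point values $\partial_t^{r-1}f(0)$ make sense in $\cK^{s+\sigma-r,\delta}(\RR;\C^N)$ by the embedding $W^{1,1}((0,T);X)\hookrightarrow \cC([0,T];X)$ applied to $\partial_t^{r-1}f\in W^{1,1}((0,T);\cK^{s+\sigma-r,\delta})$. An easy induction on $r$, using that $(\partial_t^k\cA)(0)\in \Psi_\cc^1$ maps $\cK^{s+\sigma-r+1+k,\delta}$ into $\cK^{s+\sigma-r+k,\delta}\subseteq \cK^{s+\sigma-r,\delta}$ for $k\geq0$, shows that $v_r(0)\in \cK^{s+\sigma-r,\delta}(\RR;\C^N)$.

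Next, for a \emph{smooth} solution $u$, differentiating Eq.~\eqref{psdo_eq} exactly $r$ times with respect to $t$ produces the Leibniz identity
\[
  \partial_t v_r+\cA(t)v_r=\partial_t^r f-\sum_{k=1}^r\binom{r}{k}(\partial_t^k\cA)(t)\,v_{r-k}, \quad v_r\bigr|_{t=0}=v_r(0),
\]
where $v_r=\partial_t^r u$. Since each $\partial_t^k\cA\in\cC^\infty([0,T];\Psi_\cc^1(\oRR;\C^N))$ maps $\cK^{s+\sigma-r+1,\delta}\to\cK^{s+\sigma-r,\delta}$, and since $k\geq1$ in the sum forces $v_{r-k}$ to be controlled in the stronger norm $\cK^{s+\sigma-(r-k),\delta}\subseteq\cK^{s+\sigma-r+1,\delta}$, applying Proposition~\ref{Thm4.4} at Sobolev order $s+\sigma-r$ to this hyperbolic system and proceeding inductively in $r=0,1,\dotsc,\sigma$ yields the energy inequality
\[
  \sum_{r=0}^\sigma\sup_{0\leq t\leq T}\|v_r(t)\|_{\cK^{s-r+\sigma,\delta}} \lesssim \|u_0\|_{\cK^{s+\sigma,\delta}}+\sum_{r=0}^\sigma\int_0^T\|\partial_t^r f(t)\|_{\cK^{s-r+\sigma,\delta}}\,\dd t.
\]

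Finally, I would approximate the data by sequences $u_{0m}\in\cS_0(\oRR;\C^N)$ and $f_m\in\cC^\infty([0,T];\cS_0(\oRR;\C^N))$ converging to $u_0$ and $f$ in the respective norms (using Lemma~\ref{dense} for density). For each $m$, Proposition~\ref{Thm1.1} applied at arbitrarily high Sobolev order produces a solution $u_m\in\cC([0,T];\cK^{r,\delta})$ for every $r$, which is in fact smooth in $t$ with values in every $\cK^{r,\delta}$ (again by iterating Proposition~\ref{Thm1.1} on the differentiated equations, for which the initial data are Schwartz). Applying the a priori estimate to the differences $u_m-u_{m'}$ shows that $(u_m)$ is Cauchy in $\bigcap_{r=0}^\sigma\cC^r([0,T];\cK^{s-r+\sigma,\delta}(\RR;\C^N))$, whose limit is the desired solution; uniqueness is inherited from Proposition~\ref{Thm1.1}. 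The main obstacle is the circularity inherent to Step~2: the identity for $\partial_t^r u$ presupposes that $u$ already admits $r$ time derivatives in the relevant spaces, and this regularity is only \emph{produced} by the estimate; the approximation argument of Step~3 is precisely what resolves this chicken-and-egg issue.
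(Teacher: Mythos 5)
Your proof is correct, but it takes a genuinely different route from the paper's. You handle the chicken-and-egg issue --- the identity for $\partial_t^r u$ presupposes the regularity it is supposed to produce --- by first deriving an a priori estimate for \emph{smooth} solutions (iterating Proposition~\ref{Thm4.4} across $r=0,\dotsc,\sigma$ via the Leibniz-differentiated equation), then approximating the data by Schwartz functions using Lemma~\ref{dense} and passing to the limit in a Cauchy sequence. The paper instead breaks the circularity purely within the induction on $\sigma$: for the step $\sigma\to\sigma+1$, it first applies the induction hypothesis with $s$ replaced by $s+1$ (same $\sigma$) to obtain $u\in\bigcap_{r=0}^\sigma\cC^r([0,T];\cK^{s-r+\sigma+1,\delta})$; this already makes $\partial_t u$ a well-defined element of $\cC([0,T];\cK^{s+\sigma,\delta})$ satisfying the once-differentiated equation, and a \emph{second} application of the induction hypothesis, now to that equation, boosts $\partial_t u$ into $\bigcap_{r=0}^\sigma\cC^r([0,T];\cK^{s-r+\sigma,\delta})$, with no approximation needed. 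The payoff of the paper's route is brevity and the fact that it never leaves the class of solutions already constructed in Proposition~\ref{Thm1.1}; the payoff of yours is that it is more hands-on and makes the a priori estimate for the full sum explicit, and the same scheme (a priori bound for smooth data plus density) transfers verbatim to other scales --- indeed it is exactly the pattern used later in the well-posedness proof for $H^{s,\delta}_{P,\theta}$. One small point worth being careful about in your Step~3: when asserting that Schwartz data give solutions smooth in $t$, the identification of $\partial_t u$ with the unique solution of the differentiated equation still rests on uniqueness from Proposition~\ref{Thm1.1}; that is fine as stated, but it is essentially a miniature version of the double-IH step that the paper uses at every stage, so it should be spelled out rather than left implicit.
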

\begin{proof}
We proceed by induction on $\sigma$.

The base case $\sigma=0$ was treated in Proposition~\ref{Thm1.1}.

For the induction step $\sigma\to \sigma+1$, suppose that $u_0\in
\cK^{s+\sigma+1,\delta}(\RR;\C^N)$, $f\in \bigcap_{r=0}^\sigma
W^{r,1}((0,T);$ $\cK^{s-r+\sigma+1}(\RR;\C^N))$. By induction
hypothesis, $u \in \bigcap_{r=0}^\sigma \cC^r([0,T];$
$\cK^{s-r+\sigma+1}(\RR;\C^N))$ (upon replacing $s$ with
$s+1$). Moreover, $u_t = \partial_t u$ solves the equation
\[
\left\{ \enspace
\begin{aligned}
  \partial_t(u_t) + \cA(t) u_t &= \partial_t f(t) - (\partial_t\cA)(t)
  u(t), \\ u_t(0) &= f(0) -\cA(0) u_0,
\end{aligned}
\right.
\]
where $f(0)-\cA(0) u_0\in \cK^{s+\sigma,\delta}(\RR;\C^N)$,
$\partial_t f - (\partial_t\cA) u\in \bigcap_{r=0}^\sigma
W^{r,1}((0,T);\cK^{s-r+\sigma}(\RR;\C^N))$. Again by induction
hypothesis, we conclude that $\partial_t u\in \bigcap_{r=0}^\sigma
\cC^r([0,T];\cK^{s-r+\sigma}(\RR;\C^N))$. Altogether, we obtain that
$u\in \bigcap_{r=0}^\sigma \cC^r([0,T];\cK^{s-r+\sigma+1}(\RR;\C^N))$
as required.
\end{proof}

%% ---------------------------------------------------------------------------------------

\subsection{Well-posedness in Sobolev spaces with asymptotics}

Here we establish Theorem~\ref{thm2} in full generality. In fact, we
only derive the fundamental energy inequality. Then the rest of the
proof is completely analogous to the proof in the previous section,
and it is omitted.

\begin{prop}
Let $u\in \cC([0,T];H_{P,\theta}^{s+1,\delta}(\overline\R_+^{1+d};
\C^N))\cap \cC^1([0,T];H_{P,\theta}^{s,\delta}(\overline\R_+^{1+d};
\C^N))$ for some $P\in\underline{\textup{As}}^\delta$, $s\in\R$, and $\theta\geq0$. Then
\[
  \sup_{0\leq t\leq T} \|u(t)\|_{H_{P,\theta}^{s,\delta}} \lesssim
  \|u(0)\|_{H_{P,\theta}^{s,\delta}} + \int_0^T \|\partial_t u(t)+\cA(t)u(t)\|_{H_{P,\theta}^{s,\delta}} \,\dd t
\]  
\end{prop}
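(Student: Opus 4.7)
The plan is to reduce the estimate in $H_{P,\theta}^{s,\delta}$ to the energy estimate in the weighted Sobolev space $\cK^{s-\theta,\delta+\theta}$ (Proposition~\ref{Thm4.4}) plus finitely many boundary energy estimates on $(0,T)\times\R^d$, one for each trace $\gamma_{pk}u$ present in the asymptotic expansion. By complex interpolation in $\theta$ (Definition~\ref{Def3.10}(ii)), I may assume throughout that $\pi_\C P\cap\Gamma_{1/2-\delta-\theta}=\emptyset$, so that the set $P_\theta:=\{(p,k)\in P\mid \Re p>1/2-\delta-\theta\}$ is finite and the trace map $(\gamma_{pk})_{(p,k)\in P_\theta}$ of Proposition~\ref{trace_thm} is well-defined and continuous.

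The first main step is the trace system. Applying $\gamma_{pk}$ to $\partial_t u+\cA(t)u=f$ and using Proposition~\ref{ae3} gives, for each $(p,k)\in P_\theta$, a Cauchy problem of the form \eqref{asymp_terms} on $(0,T)\times\R^d$ whose leading operator is $\partial_t+\sigma_\cc^0(\cA(t))(p)$, with a coupling term $R_{pk}[u]$ that depends linearly on those $\gamma_{ql}u$ with $q-p\in\N_0$ and $l>k$ when $q=p$. By the compatibility condition \eqref{ccond}, the principal symbol of $\sigma_\cc^0(\cA(t))(p)$ agrees (up to the sign in the Mellin covariable) with $\tilde\sigma_\psi^1(\cA(t))(0,y,\tilde\xi,\eta)$; hence the boundary restriction of any symbolic symmetrizer of $\cA$ symmetrizes $\partial_t+\sigma_\cc^0(\cA(t))(p)$, which is therefore symmetrizable hyperbolic on $(0,T)\times\R^d$. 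Standard $H^{s+\Re p+\delta-1/2,\langle k\rangle}(\R^d)$-energy estimates, combined with an order reduction in $\langle D_y\rangle$ to handle arbitrary Sobolev orders and the $\log^k\langle D_y\rangle$-weight, then yield an estimate for each $\gamma_{pk}u$. Because the coupling is triangular in the partial order $(q,l)\succ(p,k)\iff q-p\in\N_0\text{ and }l>k\text{ if }q=p$, solving in topological order and summing gives
\[
  \sum_{(p,k)\in P_\theta}\sup_{0\leq t\leq T}\|\gamma_{pk}u(t)\|_{H^{s+\Re p+\delta-1/2,\langle k\rangle}}
  \lesssim \|u(0)\|_{H_{P,\theta}^{s,\delta}}+\int_0^T\|f(t)\|_{H_{P,\theta}^{s,\delta}}\,\dd t.
\]

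The second step is the interior remainder. Set $w(t)=u(t)-\sum_{(p,k)\in P_\theta}\Gamma_{pk}\gamma_{pk}u(t)$. By Definition~\ref{Def3.10}, $w\in\cC([0,T];\cK^{s+1-\theta,\delta+\theta})\cap\cC^1([0,T];\cK^{s-\theta,\delta+\theta})$, and its $\cK^{s-\theta,\delta+\theta}$-norm is equivalent to the "residual" part of $\|u\|_{H_{P,\theta}^{s,\delta}}$. Using that $\Gamma_{pk}$ is $t$-independent, one has
\[
  \partial_t w+\cA w = f-\sum_{(p,k)\in P_\theta}\Gamma_{pk}(\partial_t\gamma_{pk}u)-\sum_{(p,k)\in P_\theta}\cA\,\Gamma_{pk}\gamma_{pk}u.
\]
Now $\cA\,\Gamma_{pk}\gamma_{pk}u$ is expanded by Proposition~\ref{important} (together with $x^j\Gamma_{p,k-r}w=\Gamma_{p-j,k-r}w$ to handle higher conormal symbols of $\cA$), and the resulting asymptotic contributions match exactly the terms $\Gamma_{pk}(\sigma_\cc^0(\cA)(p)\gamma_{pk}u)$ and $\Gamma_{pk}R_{pk}[u]$ dictated by \eqref{asymp_terms}. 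Consequently all asymptotic terms in the right-hand side cancel and $\partial_t w+\cA w\in\cC([0,T];\cK^{s-\theta,\delta+\theta})$ with norm bounded by $\|f\|_{H_{P,\theta}^{s,\delta}}$ plus the sum of the trace norms controlled in the previous step. Proposition~\ref{Thm4.4} applied with conormal order $\delta+\theta$ and Sobolev order $s-\theta$ then gives the $\cK^{s-\theta,\delta+\theta}$-estimate for $w$, and adding it to the trace estimate produces the claimed $H_{P,\theta}^{s,\delta}$-inequality.

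The main obstacle is the bookkeeping in the second step: one must verify that every asymptotic contribution produced by $\cA\,\Gamma_{pk}\gamma_{pk}u$ via the full conormal symbol sequence $\{\sigma_\cc^{-j}(\cA)\}_{j\geq 0}$ is precisely cancelled either by $\Gamma_{pk}(\partial_t\gamma_{pk}u)$ (for the "diagonal" $(p,k)$ contribution) or by a contribution attached to some $(q,l)\succ(p,k)$ through $R_{pk}[u]$. This is essentially the content of formula \eqref{asymp_exp}, read in reverse, and it requires a careful indexing of $(j,r)$-pairs against the partial order on $P_\theta$; once this is established, the remainder truly lives in $\cK^{s-\theta,\delta+\theta}$ and the two-stage energy estimate closes.
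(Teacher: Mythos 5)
Your proof is correct and follows essentially the same three-step strategy as the paper: solve the trace system \eqref{taking_traces} on the boundary, subtract off the asymptotic terms, and apply the weighted $\cK^{s-\theta,\delta+\theta}$ energy estimate to the remainder. The one cosmetic difference is that you define $w=u-\sum\Gamma_{pk}\gamma_{pk}u$ directly and apply the a priori estimate of Proposition~\ref{Thm4.4}, whereas the paper solves an auxiliary Cauchy problem for $v$ with data $v_0$, $g$ and then identifies $v$ with this difference; both routes hinge on the same cancellation claim that $g=\partial_t w+\cA w$ lands in $\cC([0,T];\cK^{s-\theta,\delta+\theta})$, which the paper asserts and you spell out via Propositions~\ref{important} and~\ref{ae3}.
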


\begin{proof}
As before, we set $u_0=u(0)$ , $f= \partial_t u +\cA(t)u$.  By
interpolation, we may assume that $\pi_\C P\cap
\Gamma_{1/2-\delta-\theta}=\emptyset$. We then proceed in three steps.

\subsubsection*{Step 1} By Proposition~\ref{ae3}, taking traces one has
 \begin{equation}\label{taking_traces}
\left\{ \enspace
\begin{aligned}
 &  \p_t (\gamma_{pk}u) +  \sigma_\cc^0(\cA(t))(p)\gamma_{pk}u
  = \gamma_{pk}f - \sum_{\substack{j\geq0,\,\ell-r=k,\\(j,\ell,r)\neq(0,k,0)}}
  \frac1{r!}\,\partial_z^r\sigma_\cc^{-j}(\cA(t))(p+j)\,\gamma_{p+j,\ell}(u), \\
  & \gamma_{pk}u\bigr|_{t=0} = \gamma_{pk}u_0.
\end{aligned}
\right.
\end{equation}
This is a Cauchy problem for an $N\times N$ first-order hyperbolic
system in $(0,T)\times\R^d$. Hyperbolicity follows from
\[
  \sigma_\psi^1(\sigma_\cc^0(\cA(t))(p))(y,\eta) = \tilde\sigma_\psi^1(\cA(t))(0,y,0,\eta).
\]
Solving these systems successively using Proposition~\ref{NTD}, one finds
\begin{multline*}
  \sup_{0\leq t\leq T}\|\gamma_{pk}u(t)\|_{H^{s+\Re p +\delta
      -1/2,\langle k\rangle}} \\ \lesssim \sum_{j\geq0,\,\ell\geq k}
  \biggl(\|\gamma_{p+j,\ell}u_0\|_{H^{s+\Re p + j +
      \delta-1/2,\langle\ell\rangle}} + \int_0^T
  \|\gamma_{p+j,\ell}f(\tau)\|_{H^{s+\Re p +j
      +\delta-1/2,\langle\ell\rangle}} \,\dd \tau \biggr).
\end{multline*}
for $(p,k)\in P$, $\Re p>1/2-\delta+\theta$.

\subsubsection*{Step 2} Set $v_0 = u_0 - \sum_{\substack{(p,k)\in P,\\\Re
    p>1/2-\delta-\theta}} \Gamma_{pk}(
\gamma_{pk}u_0)\in\mathcal K^{s-\theta+1,\delta+\theta}(\R_+^{1+d};\C^N)$,
\[
  g = f - (\partial_t+\mathcal
  A(t))\left(\sum\nolimits_{\substack{(p,k)\in P,\\\Re p>1/2-\delta-\theta}}
  \Gamma_{pk} (\gamma_{pk}u)\right)\in
  \cC([0,T];\mathcal K^{s-\theta,\delta+\theta}(\R_+^{1+d};\C^N)).
\]
Now we solve the hyperbolic system
\[
\left\{ \enspace
\begin{aligned}
  & \partial_t v + \cA(t) v = g(t), \\
  & v\bigr|_{t=0} = v_0.
\end{aligned}
\right.
\]
Then, by Proposition~\ref{Thm1.1},
\[
 \sup_{0\leq t\leq T}
    \|v(t)\|_{\mathcal K^{s-\theta,\delta+\theta}} \lesssim
    \|v_0\|_{\mathcal K^{s-\theta,\delta+\theta}} + \int_0^T
    \|g(t)\|_{\mathcal K^{s-\theta,\delta+\theta}} \,\dd t.
\]

\subsubsection*{Step 3} Because of  $u=v + \sum_{\substack{(p,k)\in P,\\\Re p>1/2-\delta-\theta}}
  \Gamma_{pk} (\gamma_{pk}u)$, it follows that
\[
  \sup_{0\leq t\leq T} \|u(t)\|_{H_{P,\theta}^{s,\delta}} \lesssim
  \|u_0\|_{H_{P,\theta}^{s,\delta}} + \int_0^T
  \|f(t)\|_{H_{P,\theta}^{s,\delta}} \,\dd t.
\]
This completes the proof.
\end{proof}

%% ---------------------------------------------------------------------------------------

\begin{rem}
By the arguments above, one can show that the Cauchy problem
\eqref{psdo_eq} is well-posed in the Sobolev spaces
$H_{P,\theta}^{s,(\delta,\rho)}(\oRR;\C^N)$, where
\begin{equation}\label{ppt3}
  H_{P,\theta}^{s,(\delta,\rho)}(\oRR) = \{u\mid \varphi u\in
  H_{P,\theta}^{s,\delta}(\oRR), (1-\varphi)u \in \langle
  x\rangle^{-\rho} H^s(\RR)\}
\end{equation}
for $s,\rho\in\R$, $P\in\As^\delta$, and $\theta\geq0$.
In view of
\[
  \cS_P(\oRR) = \bigcap_{s,\rho,\theta}
  H_{P,\theta}^{s,(\delta,\rho)}(\oRR),
\]
this immediately leads to the well-posedness of the Cauchy problem
\eqref{psdo_eq} in $\cS_P(\oRR;\C^N)$. More precisely,
Eq.~\eqref{psdo_eq} has a unique solution $u
\in\cC^\infty([0,T];\cS_P(\oRR;\C^N))$ provided that $u_0\in$ \linebreak
$\cS_P(\oRR;\C^N)$, $f\in\cC^\infty([0,T];\cS_P(\oRR;\C^N))$. 
\end{rem}

%% ---------------------------------------------------------------------------------------

%% ---------------------------------------------------------------------------------------

%% ---------------------------------------------------------------------------------------

\appendix

%% ---------------------------------------------------------------------------------------

\section{Some basic material}

For the reader's convenience, we collect here a few basic facts that are
used in the main body of the paper without further reference. (See
Section~\ref{notat} for the notation used.)

%% ---------------------------------------------------------------------------------------

\subsection{The Mellin transform}\label{a1}

The Mellin transform $M$ is defined by
\[
  M u(z) = \tilde u(z) = \int_0^\infty x^{z-1}u(x)\,\dd x, \quad
  z\in\C,
\]  
for $u \in \cC_\cc^\infty(\R_+)$. It is then suitably
extended to other spaces of (generalized) functions. The inverse
transform is given by $M^{-1}v(x) = \frac1{2\pi\ii}
\int_{\Gamma_\beta} x^{-z}v(z)\,\dd z$ for a suitable $\beta\in\R$
depending on the situation under consideration.

Among others, the Mellin transform has the following properties:
\begin{enumerate}[(a)]

\item $\{-\,x \partial_x u\}\,\tilde{}\,(z) = z\,\tilde{u}(z)$,

\item $\{x^{-\gamma} u\}\,\tilde{}\,(z) = \tilde u(z-\gamma)$ for
  $\gamma\in\R$,

\item $\{\log x\,  u\}\,\tilde{}\,(z) = \partial_z \tilde u(z)$,
  
\item $M\colon L^2(\R_+,x^{-2\gamma}\,\dd x)\to
  L^2\bigl(\Gamma_{1/2-\gamma}, (2\pi\ii)^{-1}\dd z\bigr)$ is unitary
  for $\gamma\in\R$.
\end{enumerate}
In particular, for $h\in \mathcal M^\mu(\R^d)$,
$u\in\cH^{s,\gamma}(\RR)$, one has that
\begin{equation}\label{opmhh}
  \{\operatorname{op}_M(h)u\}\,\tilde{}\,(z) = h(z)\tilde u(z), \quad
  z \in \Gamma_{1/2-\gamma},
\end{equation}
and then that $\operatorname{op}_M(h) \colon \cH^{s+\mu,\gamma}(\RR)
\to \cH^{s,\gamma}(\RR)$ is continuous.

\begin{lem}\label{ppl}
Let $v\in\cS_P(\oRR)$ for some asymptotic type $P$. Then $\tilde
v(z,\cdot)$ is a meromorphic function of $z\in\C$ with values in
$\cS(\R^d)$ having poles at most at points $z=p$ for $p\in\pi_\C P$.
Moreover, for $v$ as given in \eqref{defnsp},
\begin{equation}\label{paul}
  \tilde v(z,\cdot) = \frac{v_{p,m_p-1}}{(z-p)^{m_p}} +
  \frac{v_{p,m_p-2}}{(z-p)^{m_p-1}} + \dotsc + \frac{v_{p0}}{z-p} + O(1)
  \quad \textup{as $z\to p$,}
\end{equation}
where $v_{p0},\dotsc,v_{p,m_p-1}\in \cS(\R^d)$. In addition, if $\chi\in
\cC^\infty(\C)$ satisfies $\chi(z)=0$ for
$\operatorname{dist}(z,\pi_\C P)\leq 1/2$ and $\chi(z)=1$ for
$\operatorname{dist}(z,\pi_\C P)\geq 1$, then $\chi \tilde v\in
\cC^\infty\bigl(\R_\beta;\cS(\R_{(y,\tau)}^{d+1})\bigr)$.
\end{lem}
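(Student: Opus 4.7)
The plan is to exploit the defining decomposition \eqref{defnsp} and treat the two parts separately. Writing $v = v_{\text{as}} + r$ with $v_{\text{as}} = \sum_{(p,k)\in P} \frac{(-1)^k}{k!}\varphi(c_px)x^{-p}\log^k\!x\,v_{pk}(y)$ and $r \in \cS_0(\oRR)$, I first handle the remainder. Since $r$ vanishes to infinite order at $x=0$ and is Schwartz in $y$ and as $x\to\infty$, the factorization $r = x^N(x^{-N}r)$ with $x^{-N}r\in\cS(\oRR)$ shows that $\tilde r(z,\cdot)$ is entire in $z$ with values in $\cS(\R^d)$. Iterating the identity $z\tilde r(z,\cdot) = M\{-x\partial_x r\}(z,\cdot)$ from property (a) and using $(-x\partial_x)^\ell r\in\cS_0(\oRR)$ yields rapid decay $|\tilde r(z,y)|\lesssim |z|^{-\ell}$ in vertical strips with Schwartz values in $y$; by holomorphy in $z$ the same bound holds for every $\beta$-derivative, so $\tilde r\in\cC^\infty(\R_\beta;\cS(\R_{(y,\tau)}^{d+1}))$.

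For a single asymptotic term, properties (b), (c) and the rescaling $M\{\varphi(c_p\cdot)\}(z)=c_p^{-z}\tilde\varphi(z)$ give
\[
  M\{\varphi(c_px)x^{-p}\log^k\!x\}(z) = \partial_z^k\bigl[c_p^{\,p-z}\tilde\varphi(z-p)\bigr].
\]
Since $\varphi$ is a smooth compactly supported cutoff with $\varphi(0)=1$, a direct splitting $\varphi=\mathbf 1_{[0,a]}+(\varphi-\mathbf 1_{[0,a]})$ gives $\tilde\varphi(w)=1/w+h(w)$ with $h$ entire, and repeated integration by parts with $x\partial_x$ shows that $\tilde\varphi(w)-1/w$ together with all its derivatives is Schwartz in $\Im w$ on every vertical strip. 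Setting $w=z-p$ and Taylor-expanding $c_p^{-w}$ at $w=0$, one finds $c_p^{\,p-z}\tilde\varphi(z-p)=(z-p)^{-1}+g_p(z)$ with $g_p$ entire, so $\frac{(-1)^k}{k!}\partial_z^k$ of this equals $(z-p)^{-(k+1)}$ plus an entire function. Multiplying by $v_{pk}(y)\in\cS(\R^d)$ contributes $\frac{v_{pk}(y)}{(z-p)^{k+1}}$ to the principal part at $z=p$, and summing over the $k$ with $(p,k)\in P$ produces the expansion \eqref{paul}.

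It remains to sum over all $(p,k)\in P$ and to verify the claim about $\chi\tilde v$. Away from the poles the holomorphic part of each term is Schwartz in $\tau$ on bounded $\beta$-strips, and the scalar factor $c_p^{\,p-z}$ produces a gain $c_p^{\Re p-\beta}$ which, combined with the rapid decay of the Schwartz seminorms of $v_{pk}$ as $\Re p\to-\infty$, yields absolute convergence of $\chi\tilde v_{\text{as}}$ in $\cC^\infty(\R_\beta;\cS(\R_{(y,\tau)}^{d+1}))$; adding $\tilde r$ concludes. The main obstacle is precisely this simultaneous control: differentiating in $\beta$ brings down factors $\log c_p$ and multiplying by powers of $\tau$ must be absorbed by the Schwartz decay inherited from $\tilde\varphi$ and $v_{pk}$, so one needs the fast growth of $(c_p)$ from the definition of $\cS_P(\oRR)$ -- the very same condition that makes the series in \eqref{defnsp} converge absolutely in $\cS_P(\oRR)$ -- to dominate every one of these contributions uniformly.
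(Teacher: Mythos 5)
Your overall strategy is sound and is exactly the right one: split $v$ via \eqref{defnsp} into the remainder $r\in\cS_0(\oRR)$ and the sum of asymptotic terms, Mellin-transform each piece, and identify the pole structure. The treatment of $r$ is correct: since $(-x\p_x)^\ell r$ and $(\log x)^j r$ remain in $\cS_0(\oRR)$, iterated use of properties (a) and (c) of the Mellin transform shows $\tilde r$ is entire with the required rapid decay, hence $\tilde r\in\cC^\infty(\R_\beta;\cS(\R_{(y,\tau)}^{d+1}))$. The rescaling formula $M\{\varphi(c_p\cdot)\}(z)=c_p^{-z}\tilde\varphi(z)$ and the computation $\frac{(-1)^k}{k!}\p_z^k[c_p^{p-z}\tilde\varphi(z-p)]=(z-p)^{-(k+1)}+\text{entire}$ are both correct and lead directly to the principal part formula \eqref{paul}.

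However, there is a concrete error in the decay claim for the asymptotic terms. You assert that ``$\tilde\varphi(w)-1/w$ together with all its derivatives is Schwartz in $\Im w$ on every vertical strip.'' This is false. Write $w\tilde\varphi(w)=M\{-x\p_x\varphi\}(w)$; since $-x\p_x\varphi\in\cC_\cc^\infty(\R_+)$, the right side is Schwartz in $\tau$ and tends to $-\int_0^\infty x\varphi'\,\dd x/x=1$ at $w=0$, but it also tends to $0$ as $|\tau|\to\infty$. Hence $w(\tilde\varphi(w)-1/w)=M\{-x\p_x\varphi\}(w)-1\to-1$, i.e.\ $\tilde\varphi(w)-1/w\sim-1/w$ as $|\tau|\to\infty$, which is $O(|\tau|^{-1})$ but not rapidly decaying. (Your own splitting $\varphi=\mathbf 1_{[0,a]}+(\varphi-\mathbf 1_{[0,a]})$ makes this visible: $\varphi-\mathbf 1_{[0,a]}$ has a jump at $x=a$, so its Mellin transform decays only like $1/|\tau|$.) The claim you actually need is different and is true: $\tilde\varphi(w)$ itself is Schwartz in $\tau$ uniformly on vertical strips \emph{bounded away from} $w=0$, because $w^N\tilde\varphi(w)=M\{(-x\p_x)^N\varphi\}(w)$ with $(-x\p_x)^N\varphi\in\cC_\cc^\infty(\R_+)$ for $N\geq 1$. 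Therefore $\chi(z)\,\p_z^k[c_p^{p-z}\tilde\varphi(z-p)]$ (not the entire remainder after subtracting the pole) is Schwartz in $\tau$, since $\chi$ localizes away from $z=p$, and this is what should be used in the summation. Also note that ``rapid decay of the Schwartz seminorms of $v_{pk}$ as $\Re p\to-\infty$'' is not part of the definition of $\cS_P(\oRR)$; the convergence of $\chi\tilde v_{\text{as}}$ rests entirely on the factor $c_p^{\Re p-\beta}$, which decays arbitrarily fast on bounded $\beta$-strips precisely because $(c_p)$ is chosen (``sufficiently fast'') so that the series in \eqref{defnsp} converges absolutely. With these two corrections the proof is complete. (The paper itself does not include a proof of this lemma, so there is no in-text argument to compare against.)
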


Similar statements hold for $(\varphi v)\,\tilde{}\,(z,\cdot)$ when $v
\in H_{P,\theta}^{s,\delta}(\oRR)$, where now, however, $(\varphi
v)\,\tilde{}\,(z,\cdot)$ is holomorphic in the half-space $\{z\in\C\mid
\Re z\geq1/2-\delta\}$ and meromorphic in the open half-space
$\{z\in\C\mid \Re z>1/2-\delta- \theta\}$. As these statements are
more involved in their formulation and we do not make use of them, we
refrain from making these statements explicit. (See, e.g.,
\cite{RS1989, Sch1991}.)

%% ---------------------------------------------------------------------------------------

%% ---------------------------------------------------------------------------------------

%% ---------------------------------------------------------------------------------------

%% ---------------------------------------------------------------------------------------

\subsection{The hyperbolic Cauchy problem}

For the sake of completeness, we state a result about the
well-posedness of the hyperbolic Cauchy problem in the spaces
$H^{s,\langle k\rangle}(\R^d;\C^N)$ for $(s,k)\in\R\times\Z$. For
$k=0$, this is a standard result, but for $k\neq0$ we were not able to
locate it in the literature.

Let $\cB\in \cC^\infty([0,T];\Psi^1(\R^d;\C^N))$ and assume that the
operator $\partial_t +\cB(t,y,D_y)$ is symmetrizable hyperbolic
uniformly in $(t,y)\in[0,T]\times\R^d$ in the sense that there is a $b
\in S^{(0)}([0,T]\times\R^d\,\times$
$(\R^d\setminus0);\operatorname{Mat}_{N\times N}(\C))$ such that
\begin{enumerate}[(i)]
\item $b(t,y,\eta) = b(t,y,\eta)^*\geq c\,\textup{I}_{N}$ for
  some constant $c>0$,

\item $b(t,y,\eta)\,\sigma_\psi^1(\cB)(t,y,\eta)$ is skew-Hermitian
  for all $(t,y,\eta)\in [0,T]\times\R^d\times (\R^d\setminus0)$.
\end{enumerate}

We consider the Cauchy problem
\begin{equation}\label{ntd}
\left\{ \enspace
\begin{aligned}
  & \partial_t u + \cB(t,y,D_y)u = f(t,y), \quad (t,y)\in (0,T)\times\R^d, \\
  & u\bigr|_{t=0} = u_0(y).
\end{aligned}
\right.
\end{equation}

\begin{prop}\label{NTD}
Let $u_0\in H^{s+\sigma,\langle k\rangle}(\R^d;\C^N)$, $f\in
\bigcap_{r=0}^\sigma W^{r,1}((0,T);H^{s-r+\sigma,\langle
  k\rangle}(\R^d;\C^N))$ for some $(s,k)\in\R\times\Z$,
$\sigma\in\N_0$. Then \textup{Eq.~\eqref{ntd}} possesses a unique
solution
\[
  u\in \bigcap_{r=0}^\sigma\cC^r([0,T];H^{s-r+\sigma,\langle
    k\rangle}(\R^d;\C^N)).
\]
\end{prop}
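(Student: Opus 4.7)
The plan is to reduce the statement to the classical (i.e., $k=0$, $\sigma=0$) case of the Friedrichs--Lax--Kato theorem for first-order symmetrizable hyperbolic systems on $\R^d$, via two successive reductions: first, a Fourier-multiplier conjugation that removes the logarithmic weight; second, an induction on $\sigma$ that reduces higher $t$-regularity to the base case.

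For the first reduction, I would introduce $L := \log^k\langle D_y\rangle$. Because $\langle\eta\rangle\geq 2$ on $\R^d$ forces $\log\langle\eta\rangle\geq \log 2>0$, $L$ is well-defined and invertible for every $k\in\Z$, and by construction it is an isomorphism $H^{s,\langle k\rangle}(\R^d;\C^N) \xrightarrow{\sim} H^s(\R^d;\C^N)$ for every $s\in\R$. Setting $v=Lu$, \eqref{ntd} turns into
\[
  \partial_t v + \tilde\cB(t,y,D_y)\, v = Lf, \qquad v\bigr|_{t=0}=Lu_0,
\]
with $\tilde\cB := L\cB L^{-1}$. Since $\partial_\eta^\alpha\log^{\pm k}\langle\eta\rangle = O(\langle\eta\rangle^{-|\alpha|}\log^{|k|}\langle\eta\rangle)$ for $|\alpha|\geq 1$, the standard commutator formula gives $[L,\cB]L^{-1}\in \cC^\infty([0,T];\Psi^0(\R^d;\C^N))$, so $\tilde\cB$ and $\cB$ share the same principal symbol. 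Consequently, the symbolic symmetrizer $b$ of $\cB$ also symmetrizes $\tilde\cB$, and the transformed problem is symmetrizable hyperbolic with standard ($k=0$) data.

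The base case $k=0$, $\sigma=0$ I would then prove by retracing in the flat setting the arguments already carried out for the cone-degenerate calculus: build a genuine operator symmetrizer $\cB_{\mathrm{symm}}(t)\in \Psi^0(\R^d;\C^N)$ from $b$ via G{\aa}rding's inequality (scalar analogue of Lemma~\ref{Lem4.1}), derive the $L^2$-energy estimate using the equivalent inner product $\langle\cB_{\mathrm{symm}}\cdot,\cdot\rangle$ (scalar analogue of Proposition~\ref{Lem4.2}), pass to $H^s$ by conjugating with $\langle D_y\rangle^s$, and obtain existence via Hahn--Banach duality against the backward-in-time adjoint problem (scalar analogue of Proposition~\ref{Thm1.1}). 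Inverting $L$ then hands back the $\sigma=0$ case of the proposition for all $k$. For the induction on $\sigma$, I would argue exactly as in Proposition~\ref{abcd}: differentiating \eqref{ntd} in $t$ shows that $\partial_t u$ solves the same type of Cauchy problem with data of one $t$-order lower and of one $y$-Sobolev order less, so the induction hypothesis applies directly.

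The only step that is not bookkeeping is the commutator analysis in the first reduction: $\log^{\pm k}\langle\eta\rangle$ lies just outside the classical H\"ormander symbol classes, so one has to either work inside a symbol calculus that permits logarithmic weights, or argue directly from the Fourier-multiplier definitions together with the elementary bound $|\partial_\eta^\alpha \log^{\pm k}\langle\eta\rangle|\lesssim \langle\eta\rangle^{-|\alpha|}\log^{|k|}\langle\eta\rangle$ in order to show that $[L,\cB]L^{-1}$ maps $H^s(\R^d;\C^N)$ continuously into itself for every $s\in\R$. This is where I expect the only real technical work to lie; once it is settled, the symmetrizable hyperbolic structure is preserved under conjugation and the classical theory takes over.
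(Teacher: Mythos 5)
Your proposal is correct and follows essentially the same route as the paper: conjugate by the Fourier multiplier $\log^k\langle D_y\rangle$, observe that this perturbs $\cB$ only by a lower-order term (so the principal symbol, and hence the symbolic symmetrizer, is preserved), invoke the standard $k=0$ theory, and handle higher $\sigma$ by induction. The one point worth flagging: you assert $[L,\cB]L^{-1}\in\cC^\infty([0,T];\Psi^0(\R^d;\C^N))$, whereas the paper is more cautious and only records $[M,\cB]M^{-1}\in\bigcap_{\epsilon>0}\Psi^\epsilon_{1,0}(\R^d;\C^N)$. Both are sufficient for the conclusion, and your stronger claim is in fact defensible (the commutator's symbol is a sum of terms $\partial_\eta^\alpha\log^k\langle\eta\rangle\cdot D_y^\alpha b$ with $|\alpha|\geq 1$, of size $O(\log^{k-1}\langle\eta\rangle)$, which after division by $\log^k\langle\eta\rangle$ yields a bounded $S^0_{1,0}$ symbol decaying like $\log^{-1}\langle\eta\rangle$); but if you invoke a composition calculus that only handles classical $S^\mu_\cl$ or pure $S^\mu_{1,0}$ classes, you should say explicitly that $\log^{\pm k}\langle\eta\rangle\in\bigcap_{\epsilon>0}S^\epsilon_{1,0}$ so that the asymptotic expansion of the commutator is legitimate — precisely the care the paper's phrasing reflects. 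Your identification of this symbol-class bookkeeping as the only non-trivial step matches what the paper leaves implicit.
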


\begin{proof}
We introduce the operator $M=\log^k \langle D_y\rangle$ and set $v = M
u$, $v_0= Mu_0$, and $g=Mf$. Then $v_0\in H^{s+\sigma}(\R^d;\C^N)$,
$g\in \bigcap_{r=0}^\sigma W^{r,1}((0,T);H^{s-r+\sigma}(\R^d;\C^N))$,
and $u\in \bigcap_{r=0}^\sigma\cC^r([0,T];$ $H^{s-r+\sigma,\langle
  k\rangle}(\R^d;\C^N))$ is equivalent to $v\in
\bigcap_{r=0}^\sigma\cC^r([0,T];H^{s-r+\sigma}(\R^d;\C^N))$. Furthermore,
$v$ solves the Cauchy problem
\begin{equation}\label{ntd1}
\left\{ \enspace
\begin{aligned}
  & \partial_t v + \left(\cB(t)+\left[M,\cB(t)\right]M^{-1}\right)v = g(t,y),
    \quad (t,y)\in (0,T)\times\R^d, \\
  & v\bigr|_{t=0} = v_0(y).
\end{aligned}
\right.
\end{equation}
Now, $\cB + [M,\cB]M^{-1}\in
\cC^\infty([0,T];\Psi^1(\R^d;\C^N)+\bigcap_{\epsilon>0}\Psi_{1,0}^\epsilon(\R^d;\C^N))$,
while $\sigma_\psi^1(\cB + \left[M,\cB\right]M^{-1}) =
\sigma_\psi^1(B)$. Then standard hyperbolic theory yields that
Eq.~\eqref{ntd1} possesses a unique solution $v\in
\bigcap_{r=0}^\sigma\cC^r([0,T];$ $H^{s-r+\sigma}(\R^d;\C^N))$.
\end{proof}

%% ---------------------------------------------------------------------------------------

%% ---------------------------------------------------------------------------------------

%% ---------------------------------------------------------------------------------------

\nocite{*}

\bibliographystyle{abbrv}

\bibliography{tcbp}

%% ---------------------------------------------------------------------------------------

\end{document}